 \NewDocumentCommand{\INTERVALINNARDS}{ m m }{
 	#1 {,} #2
 }
 \NewDocumentCommand{\interval}{ s m >{\SplitArgument{1}{,}}m m o }
 {
 	\IfBooleanTF{#1}{
 		\left#2 \INTERVALINNARDS #3 \right#4
 	}{
 		\IfValueTF{#5}{
 			#5{#2} \INTERVALINNARDS #3 #5{#4}
 		}{
 			#2 \INTERVALINNARDS #3 #4
 		}
 	}
 }
 \newtheorem{theorem}{Theorem}[section]
 \newtheorem{corollary}{Corollary}[section] 
 \newtheorem{lemma}[theorem]{Lemma}
 \newtheorem{example}{Example}[section]
 \newtheorem{Conjecture}[theorem]{Conjecture}
 \newtheorem{Remark}{Remark}[section]
\begin{document} 
\title{\textbf{Julia sets of rational maps with rotational symmetries}}
\author[1]{Tarakanta Nayak\footnote{tnayak@iitbbs.ac.in}}
\author[1]{Soumen Pal\footnote{Corresponding author: sp58@iitbbs.ac.in}}
\affil[1]{\textit{School of Basic Sciences\hspace{9cm}Indian Institute of Technology Bhubaneswar, India}}
\date{}
\maketitle
\begin{abstract} By a symmetry of the Julia set of a polynomial, also referred as polynomial Julia set, we mean an Euclidean isometry preserving the Julia set. Each such symmetry is in fact a rotation about the centroid of the polynomial.	In this article, a  survey of the symmetries  of polynomial Julia sets  is made. Then  the Euclidean isometries preserving   the Julia set of  rational maps are considered. A rotation preserving the Julia set of a rational map is called a rotational symmetry of its Julia set. A sufficient condition is provided for a rational map to have rotational symmetries whenever the rational map has an exceptional point.
Two classes of rational maps are provided whose Julia sets have rotational symmetries of finite orders. Using this, it is proved that $ z\mapsto \mu z$ where $\mu^{m+n}=1$ is a rotational symmetry of the McMullen map $  z^m+\frac{\lambda}{z^n}$ for all $m,n$  with $m\geq 2$ and $\lambda \in \mathbb{C}\setminus \{0\}$. Assuming that a normalized polynomial has a simple root at the origin, it is shown that the groups of the rotational symmetries of the polynmial coincide with that of its Newton's method and Chebyshev's method.
\end{abstract}
Keywords:
Complex dynamical system; Rational maps; Fatou and Julia sets; Symmetry group of Julia sets.\\
	AMS Subject Classification: 37F10, 65H05
\section{Introduction}
  Complex Dynamics deals with the iteration of analytic functions on $\widehat{\mathbb{C}}=\mathbb{C}\cup \{\infty\}$. For a non-constant rational function $R$, the extended complex plane $\widehat{\mathbb{C}}$ is partitioned into two disjoint sets namely, the Fatou set and the Julia set. The Fatou set of $R$, denoted by $\mathcal{F}(R)$, is defined as the maximal open subset of $\widehat{\mathbb{C}}$ where $\{R^n\}_{n>0}$ is equicontinuous. The complement of $\mathcal{F}(R)$ in $\widehat{\mathbb{C}}$ is called the Julia set of $R$ and is denoted by $\mathcal{J}(R)$. By definition the Fatou set is open, whereas the Julia set is closed. Also note that $\mathcal{F}(R)=\mathcal{F}(R^k)$ for all $k\geq 1$. Further details can be found in ~\cite{Beardon_book,Milnor_book}. 
  \par 
  A point $z_0\in \widehat{\mathbb{C}}$ is said to be a fixed point of $R$ if its image under $R$ is itself. A fixed point can be classified according to its multiplier $\lambda$. If $z_0\in \mathbb{C}$, then $\lambda$ is defined as $\lambda=R'(z_0)$ and whenever $\infty$ is a fixed point of $R$, its multiplier is defined as $g'(0)$ where $g(z)=\frac{1}{R(\frac{1}{z})}$. Now, $z_0$ is said to be an attracting fixed point if $\lambda$ lies in the unit disk (i.e., $|\lambda|<1$). In a particular case, $z_0$ is superattracting whenever $\lambda=0$. If $\lambda$ lies in the exterior of the closed unit disk then $z_0$ is repelling and whenever $\lambda$ is on the unit circle, $z_0$ is said to be indifferent. An indifferent fixed point is said to be rationally indifferent or parabolic if and only if $\lambda$ is a root of unity (i.e., for some $n\in \mathbb{N}$, $\lambda^n=1$), else it is called irrationally indifferent. A point $z^*$ is called a periodic point of $R$ with period $p$, in short a $p$-periodic point if it is a fixed point of $R^p$ but $R^q(z^*)\neq z^*$	for any $q < p$. The classification of $z^*$ can be done in same way considering it to be a fixed point of the rational map $R^p$. The set $\{z^*, R(z^*),\dots, R^{p-1}(z^*)\}$ is called a $p$-periodic cycle. The Fatou set is open but not always connected. A maximal open connected subset of the Fatou set is called a Fatou component. A Fatou component $U$ of of a rational map $R$ is called $p$-periodic if $R^p(U)\subseteq U$. A Fatou component	$U$ is called pre-periodic if it is not periodic but there exist a natural number $k$ such that $R^k(U)$ is periodic. D. Sullivan proved that every Fatou component of a	rational map is either periodic or pre-periodic. In fact, there are four types of Fatou components for a rational map $R$. Let $U$ be a $p$-periodic Fatou component. Then 
  \begin{itemize}
  	\item $U$ is said to be an attracting component if $z_0\in U$, where $z_0$ is an attracting $p$-periodic point.
  	\item $U$ is a parabolic component whenever $\partial U$ contains a parabolic $p$-periodic point.
  	\item $U$ is a Herman ring or a Siegel disk if $R^p:U\mapsto U$ is conformally conjugate to an irrational rotation of some annulus or to the unit disk respectively onto itself.
\end{itemize}
  We say $U$ is a rotation domain if it is either a Herman ring or a Siegel disk.
  The Julia set is completely invariant under the function and is usually fractal with complicated topology. Though the iterative behavior of the function on its Julia set is \textit{chaotic}, it often possesses  some pattern. More precisely, there may exist a M\"{o}bius map $\sigma$ such that $\sigma(\mathcal{J}(R))=\mathcal{J}(R)$. The collection of all such maps, denoted by $\mathcal{M}(R)$, is closed under composition of functions and forms a group. This, or sometimes an appropriate subgroup of it,  gives an idea, at least  approximately about the structure of the Julia set without in fact finding it.
  \par   
 Let $X_i$ be a metric space with the metric $d_i$ for $i=1,2$. A map $h:X_1 \to X_2$ is called  an isometry if   $d_2(h(z) ,h(w) )=d_1(z,w)$ for all $z,w\in X_1$. An isometry is necessarily one-one.  Every analytic Euclidean isometry of $\mathbb{C}$ is of the form  $z\mapsto az+b$ with $|a|=1$.  Such an isometry is either a   translation (if $a=1$) or a rotation about the point $\frac{b}{1-a}$ (if $a\neq 1$).  Similarly, a chordal isometry of $\widehat{\mathbb{C}}$ is a M\"{o}bius map of the form  $z \mapsto \frac{az-\bar{b}}{bz+\bar{a}} $ where $ |a|^2+|b|^2=1$. Here the chordal distance $\rho(z,w)$ between two points in $z,w$ in $\mathbb{C}$ is given by $\frac{2 |z-w|}{\sqrt{1+|z|^2}\sqrt{1+|w|^2}}$ and  $\rho(z,\infty)=\frac{2}{\sqrt{1+|z|^2}}$. Though no translation is a chordal isometry, all rotations about the origin are isometries with respect to the  chordal metric.
 \par
 Let $\mathcal{M}(R)$ be the set of all M\"{o}bius maps preserving the Julia set of $R$.
   We consider two subgroups of $\mathcal{M}(R)$, namely
  $$\mathcal{I}(R)=\{s(z)=\frac{az-\bar{b}}{bz+\bar{a}}: |a|^2+|b|^2=1 \text{ and } s(\mathcal{J}(R))=\mathcal{J}(R)\}$$ and  
  $$\Sigma R=\{\sigma(z)=az+b: |a|=1, a\neq 1 \text{ and }\sigma(\mathcal{J}(R))=\mathcal{J}(R)\}.$$
   A (non-trivial) translation, i.e., map of the form $z \mapsto z+a$ for some $a \neq 0$ cannot be in $\mathcal{I}(R)$. If $\Sigma R$ contains rotations with respect to two distinct points then their composition is a  translation (see the proof of Theorem~\ref{Rotation}) and is in $\Sigma R$.  In other words,  if  $\Sigma R$ does not contain any  translation then each of its elements is a rotation with respect to a point, that depends on $R$ but not on $\Sigma R$. Further, if the point is the origin in this case then  $\Sigma R \subseteq \mathcal{I}(R)$. We call $\Sigma R$ is non-trivial if it contains at least one non-identity element.
\par   
The study of $\Sigma R$, referred as the symmetry group of $R,$ when $R$ is a polynomial is done  by  Julia, Baker, Eremenko and later by Beardon~\cite{BE1987,Beardon1990, Beardon1992,julia-1922}. It is known that, for every polynomial $p$, there is a point $\xi(p)$ such that each element of $\Sigma p$ is a rotation about $\xi(p)$ (see Lemma \ref{Rotation}). The study of symmetry in rational maps remains relatively underexplored. While some literature exists, such as references \cite{Levin1991,LP1997,Ye2015}, discussing rational maps having identical Julia sets, the subject still lacks extensive study. Recently, Ferreira made a systematic study of $\mathcal{I}(R)$ for all rational maps $R$ (see \cite{Ferreira2019}). Each element of $\mathcal{I}(R) $ is a rotation of the sphere with respect to some axis passing through the origin. But a rotation in the plane with respect to a non-zero point is not in $\mathcal{I}(R)$. Also, this set $\mathcal{I}(R)$ does not contain any translation. Thus, Ferreira's work does not accommodate Julia sets that are preserved under two geometrically simple classes of maps, namely translations and rotations of the plane with respect to a non-zero point. The later maps are called rotational symmetries.   This article presents a survey of results on rotational symmetries of polynomial Julia sets and the related issue of identical Julia sets for  two different polynomials. 
\par  
Let $R$ be a rational map  analytic at $z_0 \in \mathbb{C}$. If its Taylor series about $z_0$ is given by $a_k (z-z_0)^k +a_{k+1} (z-z_0)^{k+1}+\cdots $ for some $k > 0$ where $a_k \neq 0$ then we say the local degree of $R$ at $z_0$, denoted by $\deg(R,z_0)$ is $k$. The map $R$ is \textit{like} $z \mapsto z^k$ near $z_0$. The local degree of $R$ at $\infty$ or at a pole is defined by a change of coordinate using $z \mapsto \frac{1}{z}$. More precisely, if $R(\infty) \in \mathbb{C}$ then $\deg(R,\infty)$ is defined as the degree of $R(\frac{1}{z})$ at $0$. If $R(\infty)=\infty$ then $\deg(R,\infty)$ is  defined as $\deg(\frac{1}{R(\frac{1}{z})},0)$. A point $w$ is exceptional  for a rational map $R$ if $\deg(R,w)$ is equal to the degree of $R$. This is equivalent to the statement that  $\{z: R^n(z)=w~\mbox{for some}~ n \geq 0\}$ is finite. There can be at most two exceptional points for any rational map (Theorem 4.1.2., \cite{Beardon_book}). This article shows, under some condition that a rational map with an exceptional point has rotational symmetry. Two classes of rational maps are presented whose Julia sets have rotational symmetries. 

  \par 
  In Section 2, a systematic discussion of the symmetry group of polynomial Julia sets is made. Results relating the rotational symmetries of polynomial Julia sets with two polynomials with identical Julia sets are dealt with in Section \ref{Results on poly}.  Section 4 deals with the rotational symmetries of rational  Julia sets.   If a rational map $R$  has an exceptional point then existence of rotational symmetries  of the Julia sets of $R$ is proved under some condition (see Theorem \ref{Conj to poly}). We also introduce two forms of rational maps whose Julia sets  have rotational symmteries of finite order (see Theorem \ref{Form1}, \ref{Form2}).
   \par All the polynomials and rational maps are assumed to be of degree at least two, unless stated otherwise. By a translation or a rotation, we mean a non-identity translation or rotaion respectively, unless stated otherwise. 
 \section{Symmetries of polynomial Julia sets}
Let
 \begin{equation}\label{poly}
 p(z)=a_dz^d+a_{d-1}z^{d-1}+\dots +a_0
 \end{equation} where $a_d\neq 0$ and $d\geq 2$. The centroid of $p$, denoted by $\xi$ is defined as $\xi =-\frac{a_{d-1}}{da_{d}}$. For $c\in \mathbb{C}$, the equation $p(z)=c$ has $d$ number of roots counting with multiplicities. A root $z^*$ is counted $m$-times here if it is with multiplicity $m$ i.e., $p(z)-c=(z-z^*)^m h(z)$ for some analytic function $h$ in a neighborhood of $z^*$ such that $h(z^*)\neq 0$. If the roots  of $p(z)=c$ are $z_1,z_2,\dots,z_d$, repeated according to their multiplicities, then $p$ can be expressed as $p(z)=c+a_d \prod_{i=1}^{d}(z-z_i)$. Comparing the coefficients of $z^{d-1}$ on both the sides, we get $\sum_{i=1}^{d}z_i=-\frac{a_{d-1}}{a_{d}}.$ Therefore, $\xi$ is the average of the roots of $p(z)=c$ i.e., $\frac{1}{d}\sum_{i=1}^{d}z_i=-\frac{a_{d-1}}{da_{d}}$. It is important to note that $\xi$ is independent of $c$. Observe that $\xi =0$ if and only if $a_{d-1}=0.$ A polynomial whose centroid is at the origin, is called centered. 
 \par
 For any polynomial $p$ as given in (\ref{poly}), consider the affine map $\psi(z)=Az+\xi$, where $\xi$ is the centroid of $p$ and $A$ is such that $A^{d-1}=\frac{1}{a_d}$. Then the polynomial $g=\psi^{-1}\circ p\circ \psi$ is monic and centered. Such a polynomial is called normalized. The fact that every polynomial is conjugate to a normalized polynomial is crucial for investigating the symmetries of polynomial Julia sets. In fact, we have the following for all rational maps (see Theorem 3.1.4., \cite{Beardon_book}).
 \begin{lemma}\label{conj-JS}
 	  If $R_1$ and $R_2$ are two rational maps such that $R_1=\psi^{-1}\circ R_2 \circ \psi$ for some M\"{o}bius map $\psi$ then  $\mathcal{J}(R_2)=\psi(\mathcal{J}(R_1))$. 
 \end{lemma}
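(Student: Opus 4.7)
The plan is to show that the Fatou set satisfies $\mathcal{F}(R_2) = \psi(\mathcal{F}(R_1))$; taking complements in $\widehat{\mathbb{C}}$ then gives the lemma, since $\psi$ is a bijection of the sphere.

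First I would establish by induction on $n$ the identity $R_1^n = \psi^{-1}\circ R_2^n \circ \psi$ for every $n \geq 1$. The base case is the hypothesis, and the inductive step uses that $\psi \circ \psi^{-1}$ is the identity to telescope: if $R_1^n = \psi^{-1}\circ R_2^n \circ \psi$, then $R_1^{n+1} = R_1\circ R_1^n = (\psi^{-1}\circ R_2 \circ \psi) \circ (\psi^{-1}\circ R_2^n \circ \psi) = \psi^{-1}\circ R_2^{n+1}\circ \psi$. This reduces the Fatou/Julia question to a statement about normality of conjugated families.

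Next I would verify the transfer of normality. Fix a point $z_0 \in \widehat{\mathbb{C}}$ and let $w_0 = \psi(z_0)$. The Möbius map $\psi$ is a homeomorphism of $\widehat{\mathbb{C}}$ and, with respect to the chordal metric, is bi-uniformly continuous. Hence a neighborhood basis of $z_0$ is carried to a neighborhood basis of $w_0$, and equicontinuity (equivalently, normality in Montel's sense) is preserved under pre- and post-composition by $\psi^{\pm 1}$. Concretely, if $\{R_2^n\}$ is normal on a neighborhood $V$ of $w_0$, then by the identity above $\{R_1^n\} = \{\psi^{-1}\circ R_2^n \circ \psi\}$ is normal on the neighborhood $\psi^{-1}(V)$ of $z_0$, and conversely. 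Therefore $z_0 \in \mathcal{F}(R_1)$ if and only if $\psi(z_0) \in \mathcal{F}(R_2)$, which is exactly $\mathcal{F}(R_2) = \psi(\mathcal{F}(R_1))$.

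The main obstacle is essentially bookkeeping rather than substance: one must check that normality is indeed a conjugation-invariant notion on the Riemann sphere, including at points where $\psi$ sends a finite point to $\infty$ or vice versa. This is handled by working throughout with the chordal metric, under which every Möbius map is a uniform homeomorphism, so no special treatment of $\infty$ is needed. Once this invariance is in hand, taking complements of the Fatou-set equality in $\widehat{\mathbb{C}}$ and using the bijectivity of $\psi$ yields $\mathcal{J}(R_2) = \widehat{\mathbb{C}}\setminus \psi(\mathcal{F}(R_1)) = \psi(\widehat{\mathbb{C}}\setminus \mathcal{F}(R_1)) = \psi(\mathcal{J}(R_1))$, as required.
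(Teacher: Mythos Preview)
Your argument is correct and is the standard proof of this conjugacy lemma. Note, however, that the paper does not supply its own proof of this statement: it simply records the lemma and cites Theorem~3.1.4 of Beardon's book \cite{Beardon_book}. Your proof is essentially the argument one finds there---establish $R_1^{n}=\psi^{-1}\circ R_2^{n}\circ\psi$ by induction, then use that a M\"{o}bius map is a (uniformly continuous) homeomorphism of $\widehat{\mathbb{C}}$ with the chordal metric to transfer normality of the iterates, and finally pass to complements---so there is no substantive difference to discuss.
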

 For every   $\sigma \in \Sigma p$, $\psi^{-1} \circ \sigma \circ \psi$  is an Euclidean isometry preserving the Julia set of $g$. Conversely, if $\gamma \in \Sigma g$ then $\psi \circ \gamma \circ \psi^{-1}$ is an Euclidean isometry preserving the Julia set of $p$. Thus, we have the following.
  \begin{lemma}\label{Prop1}
 For every polynomial $p$, there is an affine map $T$ such that  $g=\psi ^{-1}\circ p\circ \psi$ is normalized and  $\Sigma p=\psi \circ (\Sigma g)\circ \psi^{-1}$.
 \end{lemma}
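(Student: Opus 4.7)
The plan is to adopt the affine map $\psi(z)=Az+\xi$ exactly as introduced in the paragraph preceding the lemma, where $\xi=-a_{d-1}/(da_d)$ is the centroid of $p$ and $A$ is any complex number satisfying $A^{d-1}=1/a_d$. The two assertions (that $g:=\psi^{-1}\circ p\circ \psi$ is normalized and that $\Sigma p=\psi\circ (\Sigma g)\circ \psi^{-1}$) will then both follow from direct computations together with Lemma~\ref{conj-JS}.

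For the first assertion I would compute $g(z)=A^{-1}\bigl(p(Az+\xi)-\xi\bigr)$ and read off the leading coefficient as $A^{-1}\cdot a_d A^d = a_d A^{d-1}=1$, so that $g$ is monic. To check that $g$ is centered, the cleanest route is to use the characterization of the centroid as the average of the $d$ preimages of any point: the preimages of $0$ under $g$ are $\psi^{-1}(\alpha_i)=(\alpha_i-\xi)/A$ for $i=1,\dots,d$, where $\alpha_1,\dots,\alpha_d$ are the roots of $p$, and these average to $(d\xi-d\xi)/(dA)=0$ because $\xi$ is the mean of the $\alpha_i$. Hence the centroid of $g$ is $0$, i.e., $g$ is centered.

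For the second assertion, let $\sigma(z)=cz+e$ be an arbitrary element of $\Sigma p$, so $|c|=1$ and $c\neq 1$. A direct calculation gives
\begin{equation*}
\psi^{-1}\circ \sigma \circ \psi(z) \;=\; cz \;+\; \frac{(c-1)\xi+e}{A},
\end{equation*}
which is affine with leading coefficient $c$, still of modulus one and still distinct from $1$. Applying Lemma~\ref{conj-JS} to the conjugacy $g=\psi^{-1}\circ p\circ \psi$ yields $\mathcal{J}(g)=\psi^{-1}(\mathcal{J}(p))$, and the invariance $\sigma(\mathcal{J}(p))=\mathcal{J}(p)$ transfers via $\psi$ to $(\psi^{-1}\circ\sigma\circ\psi)(\mathcal{J}(g))=\mathcal{J}(g)$. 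Thus $\psi^{-1}\circ\sigma\circ\psi\in\Sigma g$, giving $\Sigma p\subseteq \psi\circ(\Sigma g)\circ\psi^{-1}$. The reverse inclusion is obtained by the same argument with $p$ and $g$ interchanged.

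I do not anticipate any real obstacle: the argument is essentially a bookkeeping exercise. The only subtlety worth flagging is that conjugation by an affine map preserves the linear part of an affine map, so it preserves the distinction between translations and rotations; this is precisely what ensures that the restriction $c\neq 1$ built into the definition of $\Sigma R$ is compatible with the conjugation and yields an equality rather than just an inclusion.
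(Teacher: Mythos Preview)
Your approach is essentially the same as the paper's: you use the same conjugating map $\psi(z)=Az+\xi$ and invoke Lemma~\ref{conj-JS} to transfer isometries between $\mathcal{J}(p)$ and $\mathcal{J}(g)$, with the added care of checking that conjugation preserves the condition $c\neq 1$. One small slip: the preimages of $0$ under $g$ are $\psi^{-1}$ of the solutions of $p(z)=\psi(0)=\xi$, not of the roots of $p$ itself; this does not affect your conclusion, since the paper already notes that the average of the solutions of $p(z)=c$ equals $\xi$ for every $c$.
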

Note that the affine conjugacy $\psi$ maps $0$, the centroid of $g$ to that of $p$.
 Now onwards, we discuss the symmetry group of normalized polynomials and without loss of generality assume that $p$ is normalized.	
 If $p$ does not have any constant term in its expression, i.e., if $p(0) =0$ then  take $z^\alpha$ common from the expression of $p$ where $\alpha$  is the multiplicity  of $0$ as a root of $p$. 
 \par If $p$ has a non-zero constant term, we take $\alpha=0.$ Hence it is always possible to express $p$ as $p(z)=z^\alpha p_1(z)$, where $p_1$ is a normalized polynomial, $p_1(0)\neq 0$, and $\alpha\in \mathbb{N}\cup \{0\}$ is maximal for this form. Let $\beta_1, \beta_2,\dots,\beta_k$ be the (non-zero) powers of $z$ in the expression of $p_1$ and $\beta=\gcd (\beta_1, \beta_2,\dots,\beta_k)$. Then $p_1$ can be expressed as $p_1(z)=z^{m_1\beta}+a_2z^{m_2\beta}+\dots +a_kz^{m_k\beta}+a_{k+1},$ where $a_i\neq 0$ for $i=2,3,\dots,k+1$, $m_j\in \mathbb{N}$ for $j=1,2,\dots, k$, and $\gcd (m_1, m_2,\dots m_k)=1.$ Hence,
 \begin{equation}\label{norm}
 p(z)=z^\alpha p_0(z^\beta)
 \end{equation}
 where 
\begin{equation}\label{form p_0}
p_0(z)=z^{m_1}+a_2z^{m_2}+\dots +a_kz^{m_k}+a_{k+1}
\end{equation} 
 is a monic polynomial. Note that $p_0$ is not necessarily centered whereas  $p_1$ is always so.
 Further note that, $\alpha$ and $\beta$ are maximal for the expression (\ref{norm}) and they determine $p_0$ completely.
 \par 
  If $\lambda_1, \lambda_2,\dots, \lambda_r$ are the distinct roots of $p_0$ with multiplicities $b_1,b_2,\dots, b_r$ respectively, then we can write $p$ as $$p(z)=z^\alpha\prod_{s=1}^{r}(z^\beta -\lambda_s)^{b_s}.$$ Therefore, all the non-zero roots of $p$ can be partitioned into $r$ number of sets $A_s=\{z:z^\beta=\lambda_s\}$ where $s=1,2,\dots,r.$ Each element of $A_s$ lies on a circle of radius $|\lambda_s|^{1/\beta}$ around the origin, and each of these differs from its nearest one by an argument of $\frac{2 \pi}{\beta}$. Hence each rotation about the origin of order $\beta$ preserves every $A_s$. In other words, each such rotation takes a root of $p$ to another root with the same modulus and with the same multiplicity. These rotations are going to be the elements of $\Sigma p$.
 \par

 Since $\infty$ is a superattracting fixed point of each polynomial, it  has a neighborhood  contained in the Fatou set. Thus we have the following.
\begin{lemma}\cite{Beardon_book}
	For every polynomial $p$, $\mathcal{J}(p)$ is bounded.
	\label{bded}
\end{lemma}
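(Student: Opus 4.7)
The statement is essentially a direct consequence of the remark made immediately before it, namely that $\infty$ is a superattracting fixed point of any polynomial $p$ of degree $d\geq 2$, and hence has a neighborhood contained in the Fatou set. The plan is simply to exhibit that neighborhood explicitly as the complement of a closed disk in the finite plane, which then forces $\mathcal{J}(p)$ to lie inside that disk.

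First, I would control $|p(z)|$ for large $|z|$. Writing $p(z)=a_d z^d+\cdots+a_0$ with $a_d\neq 0$, the triangle inequality yields
$$|p(z)|\geq |a_d|\,|z|^d - \sum_{j=0}^{d-1}|a_j|\,|z|^j \geq \tfrac{|a_d|}{2}|z|^d \quad \text{for all } |z|\geq R_0,$$
for some $R_0>0$ depending on the coefficients. Choosing $R\geq R_0$ large enough so that $\tfrac{|a_d|}{2}R^{d-1}\geq 2$ guarantees the stronger inequality $|p(z)|\geq 2|z|$ whenever $|z|\geq R$.

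Next, by a straightforward induction one gets $|p^n(z)|\geq 2^n|z|\geq 2^n R$ on the set $\{|z|\geq R\}$, so the iterates $p^n$ converge to $\infty$ uniformly (in the chordal metric) on $\{|z|>R\}\cup\{\infty\}$. This set is therefore open, contains $\infty$, and lies in $\mathcal{F}(p)$ by normality. Passing to complements gives $\mathcal{J}(p)\subseteq\{z\in\mathbb{C}:|z|\leq R\}$, which is bounded. The only minor technical point is the explicit choice of $R$ producing the inequality $|p(z)|\geq 2|z|$; everything else is immediate from the definitions of the Fatou and Julia sets.
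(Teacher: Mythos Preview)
Your argument is correct and follows exactly the idea the paper sketches just before the lemma: you make explicit the neighborhood of $\infty$ on which iterates escape to $\infty$, showing it lies in $\mathcal{F}(p)$, whence $\mathcal{J}(p)$ is contained in a closed disk. The paper itself gives no further proof beyond that remark and the citation to Beardon, so your proposal is simply a fleshed-out version of the same approach.
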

An Euclidean isometry $\sigma(z)=az+b, |a|=1$ is either a translation (if $a=1$) or a rotation about the point $\frac{b}{1-a}$ (if $a\neq 1$). Indeed, $\phi \sigma \phi^{-1}(z)=az$ where $\phi(z)=z-\frac{b}{1-a}.$ Now we identify possible elements of $\Sigma p$.
\begin{lemma}\label{Rotation}
	For each polynomial $p$, there is a point $\xi(p)$  such that every element of $\Sigma p$ is a rotation about $\xi(p)$.
\end{lemma}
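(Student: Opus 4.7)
The plan is to proceed in two steps: first eliminate translations from $\Sigma p$ using the boundedness of $\mathcal{J}(p)$, and then show that any two rotations in $\Sigma p$ must share the same centre by examining their commutator.

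For the first step, I would argue by contradiction. Suppose a translation $\sigma(z) = z + b$ with $b \neq 0$ preserved $\mathcal{J}(p)$. Picking any $z_0 \in \mathcal{J}(p)$, the full orbit $\{\sigma^n(z_0)\}_{n\in\mathbb{Z}} = \{z_0 + nb\}$ would lie entirely in $\mathcal{J}(p)$ and be unbounded, contradicting Lemma~\ref{bded}. Consequently, every element of $\Sigma p$ has the form $\sigma(z) = az + b$ with $|a|=1$ and $a \neq 1$, and is therefore a rotation about the unique fixed point $c(\sigma) := b/(1-a)$.

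For the second step, I would take two arbitrary elements $\sigma_i(z) = a_i z + b_i$ of $\Sigma p$ ($i=1,2$) and form their commutator $\tau := \sigma_1 \sigma_2 \sigma_1^{-1} \sigma_2^{-1}$. Since multipliers compose multiplicatively, the multiplier of $\tau$ is $a_1 a_2 \cdot a_1^{-1} a_2^{-1} = 1$, so $\tau$ is a translation. The set of Euclidean isometries preserving $\mathcal{J}(p)$ is a group, so $\tau$ belongs to it; by the first step, $\tau$ must be the identity. A short direct calculation gives $\tau(z) = z + (a_1 - 1)b_2 + (1 - a_2)b_1$, and the condition $\tau = \mathrm{id}$ simplifies to $b_1/(1 - a_1) = b_2/(1 - a_2)$, that is, $c(\sigma_1) = c(\sigma_2)$. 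Setting $\xi(p)$ equal to this common centre (and choosing it arbitrarily if $\Sigma p = \emptyset$) then yields the conclusion.

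I do not foresee any serious obstacle here; the argument is essentially an algebraic consequence of Lemma~\ref{bded}, with the only computational content being the commutator calculation. The conceptual point is precisely that boundedness of $\mathcal{J}(p)$, a feature enjoyed by polynomials but not by rational maps in general, is exactly what forbids translations and thereby pins down a common centre of rotation. This also explains why the study of $\Sigma R$ for general rational maps $R$ undertaken later in the paper requires new ideas rather than a direct adaptation of this proof.
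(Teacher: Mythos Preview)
Your proposal is correct and follows essentially the same strategy as the paper's proof: first rule out translations via the boundedness of $\mathcal{J}(p)$ (Lemma~\ref{bded}), then compute the commutator of two rotations to show that distinct centres would force a nontrivial translation into the symmetry group. The only cosmetic difference is that the paper parametrises the rotations by their centres and angles and reads off the commutator $z+(\alpha-\beta)(e^{it}-1)(1-e^{i\theta})$ directly, whereas you work with the $(a_i,b_i)$ coefficients and then translate back to centres; the content is identical.
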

\begin{proof}
 If there is a translation $T$ in $\Sigma p$ then $T^n\in \Sigma p$ for all $n\in \mathbb{N}$. For $z\in \mathcal{J}(p)$, $T^n (z) \in \mathcal{J}(p)$ whereas  $T^n(z)\rightarrow \infty$ as $n\rightarrow \infty$. This gives that $\mathcal{J}(p)$ is unbounded, contradicting Lemma~\ref{bded}. Therefore $\Sigma p $ does not contain any non-trivial (i.e., non-identity) translation.
	\par 
	Suppose that $\sigma, \gamma \in \Sigma p$ are two non-trivial (i.e., non-identity) rotations about two distinct points $\alpha$ and $\beta$ respectively. Then $\sigma(z)=ze^{i\theta}+\alpha(1-e^{i\theta})$ and $\gamma(z)=ze^{it}+\beta(1-e^{it})$, for some $\theta, t \in (0,2\pi).$  Note that $\sigma^{-1}(z)=ze^{-i\theta}+\alpha(1-e^{-i\theta})$ and $\gamma^{-1}(z)=ze^{-it}+\beta(1-e^{-it})$. As $\Sigma p$ is a group under composition of functions, $\gamma\circ \sigma \circ\gamma^{-1}\circ \sigma^{-1}\in \Sigma p.$ But $\gamma\circ \sigma \circ\gamma^{-1}\circ \sigma^{-1}(z)=z+(\alpha-\beta)(e^{it}-1)(1-e^{i \theta})$,
	which is a non-trivial translation. Since this is already known to be impossible, $\alpha=\beta$. Therefore, there is a point $\xi(p)$ such that every element of $\Sigma p$ is a rotation about $\xi(p)$.
\end{proof}
 Theorem \ref{Rotation} states that all the elements of $\Sigma p$ are  rotations  with respect to a single point, which possibly  depends on $p$. What can that point be? To answer this question, recall that the average of all solutions of the equation $p(z)=c$ is $\xi$, the centroid of $p$, irrespective of the value of $c$. For every root $w$ of $p(z)=c$, the average of all solutions of $p(z)=w$ is also $\xi$. In general, the average of all solutions of $p^{-n}(z)=c$ is $\xi$, for every $n\in \mathbb{N}$ and $c\in \mathbb{C}.$ Now consider a  point $z_0\in \mathcal{J}(p)$. By the backward invariance of the Julia set, every open set containing the Julia set contains the set $\{z: p^n(z)=z_0\}$ for all $n$. In fact, the set $\{z: p^n(z)=z_0\}$ is in a sense \textit{uniformly distributed} in the Julia set. To see it, let $\epsilon>0$ and the Julia set of $p$ be covered by finitely many balls $B_i, i=1,\cdots k$, each with radius $\frac{\epsilon}{2}$. This is possible as $\mathcal{J}(p)$ is compact.  Since $z_0$ is not exceptional (because all exceptional points belong to the Fatou set), there is an $n_i$ such that $z_0 \in p^{n}(B_i)$ for all $n >n_i$ (by Theorem 6.9.4,~\cite{Beardon_book}). If $N =\max\limits_{1 \leq i \leq k} \{n_i\}$ then $z_0 \in p^{n}(B_i)$ for all $n >N$ and for all $i$. Let $z_i \in B_i$ such that $p^{n}(z_i)=z_0$. Here $z_i$ depends on $n$. Now the union of balls with radius $\epsilon$ and with center at the points of $\{z: p^n(z)=z_0\}$ contains $\mathcal{J}(p)$ for all $n >N$.   This is  a reason why the centroid is expected to be the point stated in Theorem \ref{Rotation}.
 
 If a normalized polynomial $p$ is  affine conjugate to a monomial then its Julia set is a circle whose center is $0$, the centroid of $p$. In this case, $\Sigma p$ contains all the rotations about $0$. Therefore, $\Sigma p$ is an infinite set. Beardon proves that the converse  of this statement is also true (see Lemma 4, \cite{Beardon1990}). 
 
 Consider $p$ which is not conjugate to any monomial. There is a conformal map $\phi$ in a neighborhood of $\infty$, called the B\"{o}ttcher coordinate into the unit disk such that $\phi \circ p \circ \phi^{-1}(z)=z^d$ where $d$ is the degree of $p$. The function $|\phi|$ extends continuously to the whole basin of attraction, $\mathcal{A}$ of $\infty$. Using this $\phi$, the Green's function $\log |\phi(z)|$ is defined in   $\mathcal{A}$ with the pole at $\infty$ and further analysis gives that every element of $\Sigma p$ is rotation about the origin. In fact, Beardon proved the following.
 
 \begin{theorem}[\cite{Beardon_book}]
 	Let $p$ be a normalized polynomial of degree $d\geq 2$. A rotation $\sigma$ of finite order about the origin is in $\Sigma g$ if and only if $p \circ \sigma=\sigma^d\circ p.$
 \end{theorem}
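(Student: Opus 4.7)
The plan is to treat the two implications separately, relying throughout on the fact that for a polynomial the Julia set coincides with $\partial\mathcal{A}(\infty)$, the boundary of the basin of attraction of infinity. Write $\sigma(z)=\mu z$ with $|\mu|=1$. For the $(\Leftarrow)$ direction, assume $p\circ\sigma=\sigma^d\circ p$. I would first induct on $k$ to obtain $p\circ\sigma^k=\sigma^{dk}\circ p$, and then on $n$ to derive the iterated relation $p^n\circ\sigma=\sigma^{d^n}\circ p^n$. Because $\sigma$ preserves moduli, for any $z\in\mathcal{A}(\infty)$ we have
$$|p^n(\sigma(z))|=|\sigma^{d^n}(p^n(z))|=|p^n(z)|\to\infty,$$
so $\sigma(z)\in\mathcal{A}(\infty)$; the same argument applied to $\sigma^{-1}$, whose functional equation $p\circ\sigma^{-1}=\sigma^{-d}\circ p$ follows from inverting the hypothesis, produces $\sigma(\mathcal{A}(\infty))=\mathcal{A}(\infty)$, and hence $\sigma(\mathcal{J}(p))=\mathcal{J}(p)$.

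For the $(\Rightarrow)$ direction, assume $\sigma\in\Sigma p$. Let $\phi$ denote the B\"ottcher coordinate at $\infty$, normalized so that $\phi(z)/z\to 1$ as $z\to\infty$, which is available since $p$ is monic, and which satisfies the conjugacy $\phi\circ p=\phi^d$ on a neighborhood of $\infty$. The Green's function $G(z)=\log|\phi(z)|$ extends harmonically to $\mathcal{A}(\infty)\setminus\{\infty\}$, vanishes on $\mathcal{J}(p)$, and obeys $G(z)-\log|z|\to 0$ as $z\to\infty$. Since $\sigma$ preserves $\mathcal{J}(p)$ it also preserves the Fatou component $\mathcal{A}(\infty)$, and as $|\sigma(z)|=|z|$ the composition $G\circ\sigma$ satisfies the same three characterizing properties; uniqueness of the Green's function forces $G\circ\sigma=G$ on $\mathcal{A}(\infty)$. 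Therefore $|\phi\circ\sigma|=|\phi|$ near $\infty$, so the quotient $\phi(\sigma(z))/\phi(z)$ is a nonvanishing holomorphic function of constant modulus, hence a unimodular constant $\nu$. Comparing leading terms $\phi(\mu z)\sim\mu z$ and $\nu\phi(z)\sim\nu z$ as $z\to\infty$ forces $\nu=\mu$, and from $\phi\circ\sigma=\mu\phi$ one computes
$$\phi(p(\sigma(z)))=\phi(\sigma(z))^d=\mu^d\phi(z)^d=\mu^d\phi(p(z))=\phi(\sigma^d(p(z))).$$
Injectivity of $\phi$ near $\infty$ gives $p\circ\sigma=\sigma^d\circ p$ there, and the identity theorem for polynomials extends it to all of $\mathbb{C}$.

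The main obstacle will be the step in $(\Rightarrow)$ that upgrades the merely topological statement $\sigma(\mathcal{J}(p))=\mathcal{J}(p)$ into the exact holomorphic identity $\phi\circ\sigma=\mu\phi$. The Green's function argument only yields equality of moduli, and pinpointing the phase constant requires the precise asymptotic normalization of $\phi$ at infinity, which in turn uses that $p$ is monic and centered. Once $\phi\circ\sigma=\mu\phi$ is secured, the remainder is a mechanical check through the B\"ottcher conjugacy.
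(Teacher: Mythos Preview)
Your proposal is correct and the $(\Rightarrow)$ implication follows exactly the route the paper outlines (and attributes to Beardon): B\"ottcher coordinate at $\infty$, extension of $\log|\phi|$ to the Green's function on $\mathcal{A}(\infty)$, then the uniqueness/maximum-modulus argument to pin down $\phi\circ\sigma=\mu\phi$ and read off the functional equation. The paper does not supply details for $(\Leftarrow)$, deferring entirely to \cite{Beardon_book}; your direct argument via the iterated relation $p^n\circ\sigma=\sigma^{d^n}\circ p^n$ and invariance of $\mathcal{A}(\infty)$ is the standard one and is fine. One minor remark: neither your argument nor the paper's sketch actually uses the finite-order hypothesis on $\sigma$; it is there because, once one knows $p$ is not a monomial, $\Sigma p$ is automatically finite, so the restriction is harmless rather than essential.
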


The symmetry group of Julia set of a normalized polynomial is now described.
 \begin{theorem}(\cite{Beardon_book})
 	If $p$ is a normalized polynomial of the form (\ref{norm}) then
 	 $\Sigma p=\{\sigma: \sigma(z)=\lambda z, \lambda^\beta=1\}.$ 
 	\label{poly-symm}
 \end{theorem}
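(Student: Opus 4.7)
The plan is to prove both inclusions using the preceding Beardon-type criterion, which says that a finite-order rotation $\sigma(z)=\lambda z$ lies in $\Sigma p$ if and only if $p(\lambda z)=\lambda^d p(z)$. Throughout, note that the degree of $p$ equals $d=\alpha+m_1\beta$ by construction.

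For the inclusion $\supseteq$, I would simply verify the criterion when $\lambda^\beta=1$. Using the form $p(z)=z^\alpha p_0(z^\beta)$,
\[ p(\lambda z)=\lambda^\alpha z^\alpha p_0(\lambda^\beta z^\beta)=\lambda^\alpha p(z), \]
and since $\lambda^\beta=1$ implies $\lambda^d=\lambda^{\alpha+m_1\beta}=\lambda^\alpha$, we conclude $p(\lambda z)=\lambda^d p(z)$, so $\sigma\in\Sigma p$.

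For the inclusion $\subseteq$, observe first that the form $(\ref{norm})$ forces $p_0$ to have both a leading term $z^{m_1}$ and a nonzero constant $a_{k+1}$, so $p$ is not a monomial and hence not affinely conjugate to one. Thus $\Sigma p$ is finite, and by the discussion preceding the theorem every element is a rotation $\sigma(z)=\lambda z$ of finite order about the origin. The criterion $p(\lambda z)=\lambda^d p(z)$ then yields, after dividing by $z^\alpha$ and setting $\mu:=\lambda^\beta$, the polynomial identity
\[ p_0(\mu w)=\mu^{m_1} p_0(w) \quad\text{for all } w\in\mathbb{C}. \]
Comparing the coefficients of $w^{m_j}$ for $j=2,\ldots,k$ gives $\mu^{m_1-m_j}=1$, and comparing constant terms, using $a_{k+1}\neq 0$, gives $\mu^{m_1}=1$. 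Together these imply $\mu^{m_j}=1$ for every $j=1,\ldots,k$, and since $\gcd(m_1,\ldots,m_k)=1$, a B\'ezout combination gives $\mu=1$, that is, $\lambda^\beta=1$.

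The main obstacle I anticipate is ensuring that the constant-term comparison is actually available: this uses precisely the maximality of $\alpha$ in the decomposition $p(z)=z^\alpha p_1(z)$, which forces $p_1(0)=p_0(0)=a_{k+1}\neq 0$. Without this maximality, the coefficient comparisons would produce only relations of the form $\mu^{m_i-m_j}=1$ among pairwise differences of the exponents in $p_0$, and one could not invoke the coprimality $\gcd(m_1,\ldots,m_k)=1$ to conclude $\mu=1$ in general. Apart from this bookkeeping point, the argument is essentially a routine coefficient comparison enabled by the structural form of $p$.
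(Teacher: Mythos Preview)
Your argument is correct. The paper does not supply its own proof of this theorem; it simply cites Beardon's book, so there is no line-by-line comparison to make. What you have done is exactly the natural derivation from the criterion stated immediately before (that a finite-order rotation $\sigma(z)=\lambda z$ lies in $\Sigma p$ iff $p\circ\sigma=\sigma^d\circ p$), together with the preceding discussion that for a normalized $p$ not conjugate to a monomial $\Sigma p$ is finite and consists of rotations about the origin. One small point worth tightening: you pass from ``$p$ is not a monomial'' to ``$p$ is not affinely conjugate to a monomial''; this is fine because two normalized polynomials that are affinely conjugate must in fact be equal (the conjugating map must fix the centroid $0$ and preserve monicity), so a normalized non-monomial cannot be conjugate to $z^d$. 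With that remark made explicit, the proof is complete.
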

\begin{Remark}
It is easy to observe that the above theorem is true even if $p$ is not monic but only centered.	
	\end{Remark}

 In view of Lemma~\ref{Prop1}, the symmetry group of the Julia set of an arbitrary polynomial $q$ with centroid at $\xi$ that is conjugate to a normalized polynomial $p$, i.e., $p = \psi^{-1} \circ q \circ \psi$ for $\psi(z)=Az +\xi$ for a suitable $A$ (see Lemma~\ref{Prop1}) is given by 
$$\Sigma q=\{\sigma: \sigma(z)=\lambda (z-\xi)+\xi, \lambda^\beta =1\},$$ where $p$ is as given in Theorem~\ref{poly-symm}.

We now discuss few examples.
\begin{example}
\begin{enumerate}
	\item The polynomial $q(z)=z^3+3z^2+3z-\frac{1}{3} $   is not  normalized  and its centroid is $\xi=-1.$ For the affine map $\psi(z)=z-1$, the polynomial $\psi^{-1}\circ q\circ \psi(z)=z^3-\frac{1}{3}$ is clearly  normalized. Writing it in the form (\ref{norm}), it is observed that $\alpha=0$ and $\beta=3$. Therefore, $\Sigma (\psi^{-1} \circ q \circ \psi)=\{z\mapsto \lambda z: \lambda^3=1\}$. Hence, $\Sigma q=\psi \circ(\Sigma p)\circ \psi^{-1}=\{z\mapsto \lambda(z+1)-1: \lambda^3=1\}$ (see Fig. \ref{Graphs}(a)).
	\item The polynomial $p(z)=z^3-1.2i z$  is   normalized and is in the prescribed form (\ref{norm}) with $\alpha=1$ and $ \beta=2.$ Therefore, $\Sigma p=\{z\mapsto \lambda z: \lambda^2=1\}$ (see Fig. \ref{Graphs}(b)).
	\item The symmetry group of the normalized polynomial $p(z)=z^3-z-0.5i$ is trivial as it is in the form (\ref{norm}) where $\alpha=0$ and $\beta=1.$ However, there is a non-M\"{o}bius homeomorphism preserving its Julia set. In fact, $p(-\bar{z})=(-\bar{z})^3-(-\bar{z})-0.5i=-(\bar{z^3}-\bar{z}-\overline{0.5i})=-\overline{p(z)}$. Therefore, $\mathcal{J}(p)$ is preserved under the reflection about the imaginary axis (see Fig. \ref{Graphs}(c)). That $z \mapsto -\overline{z}$ is a chordal isometry is used here.
\end{enumerate}
\begin{figure}[h!]\label{q}
	\begin{subfigure}{.5\textwidth}
		\centering
		\includegraphics[width=0.98\linewidth]{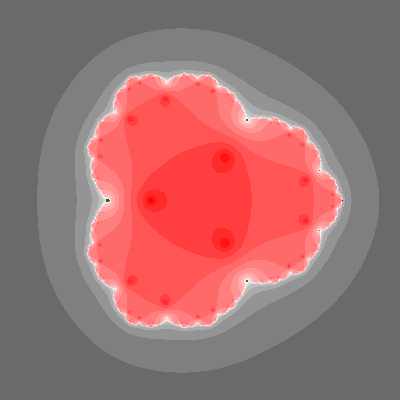}
		\caption{Julia set of $ z^3-\frac{1}{3} $}
	\end{subfigure}%
	\begin{subfigure}{.5\textwidth}
		\centering
		\includegraphics[width=0.98\linewidth]{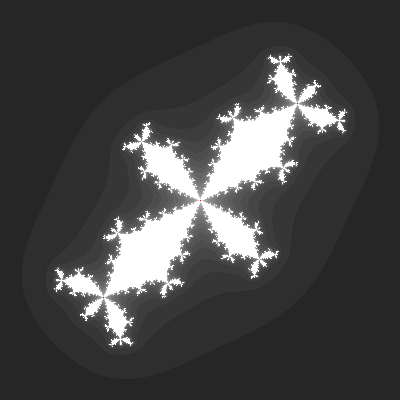}
		\caption{Julia set of $ z^3-1.2i z $}
	\end{subfigure}\\[1ex]
	\centering
	\begin{subfigure}{0.5\textwidth}
		\centering
		\includegraphics[width=1\linewidth]{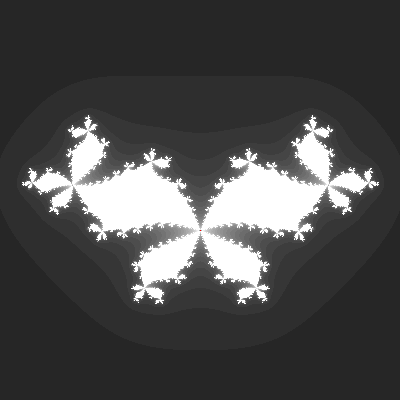}
		\caption{Julia set of $ z^3-z-0.5i $}
	\end{subfigure}
	\caption{Symmetries of the Julia sets }
	\label{Graphs}
\end{figure}
\end{example}
%
\section{Polynomials with the same Julia set}\label{Results on poly}
  \textit{When two polynomials have the same Julia set?} This question is closely related to the symmetries of the Julia set. We start with the following result which  is  proved by Julia in 1922. 
 \begin{theorem}\cite{julia-1922}
 If two polynomials $p$ and $q$ commute (i.e., $p\circ q=q\circ p$) then $\mathcal{J}(p)=\mathcal{J}(q)$.
\label{julia-1922}
 \end{theorem}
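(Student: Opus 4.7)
My plan is to exploit the commutation relation $p^n \circ q = q \circ p^n$ (an easy induction from $p \circ q = q \circ p$) to prove that the Fatou set of $p$ is completely invariant under $q$, which makes $\mathcal{J}(p)$ a closed, completely $q$-invariant subset of $\widehat{\mathbb{C}}$. A symmetric argument will give that $\mathcal{J}(q)$ is completely $p$-invariant, and then the theorem follows from the well-known characterization of a Julia set as the smallest closed, completely invariant set containing at least three points.

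The first step is to establish the two inclusions $q(\mathcal{F}(p)) \subseteq \mathcal{F}(p)$ and $q^{-1}(\mathcal{F}(p)) \subseteq \mathcal{F}(p)$, which together amount to $q^{-1}(\mathcal{J}(p)) = \mathcal{J}(p)$. For the forward inclusion, I would fix $z_0 \in \mathcal{F}(p)$ and a neighborhood $U$ of $z_0$ on which $\{p^n\}$ is normal; given any subsequence, extract a sub-subsequence $\{p^{n_k}\}$ converging locally uniformly on $U$, hence locally uniformly bounded there. The identity $p^{n_k} \circ q = q \circ p^{n_k}$ and continuity of $q$ then yield local uniform boundedness of $\{p^{n_k}\}$ on the open set $q(U)$, a neighborhood of $q(z_0)$, and Montel's theorem gives normality. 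For the reverse inclusion, I would take $w_0$ with $q(w_0) \in \mathcal{F}(p)$, a neighborhood $V$ of $q(w_0)$ on which $\{p^n\}$ is normal, and a neighborhood $U$ of $w_0$ with $q(U) \subseteq V$. After extracting a sub-subsequence $\{p^{n_k}\}$ converging locally uniformly on $V$, the identity $q \circ p^{n_k} = p^{n_k} \circ q$ forces local uniform boundedness of $q \circ p^{n_k}$ on $U$; since $q$ is a polynomial and hence $q(z) \to \infty$ as $z \to \infty$, this pulls back to local uniform boundedness of $p^{n_k}$ on $U$, and Montel again gives normality.

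The same argument with the roles of $p$ and $q$ interchanged shows that $\mathcal{J}(q)$ is completely $p$-invariant. Since every polynomial Julia set of degree at least two is infinite (it contains infinitely many repelling periodic points), the standard fact that $\mathcal{J}(R)$ is the smallest closed, completely $R$-invariant subset of $\widehat{\mathbb{C}}$ containing at least three points (see \cite{Beardon_book}) then yields $\mathcal{J}(p) \subseteq \mathcal{J}(q)$ and $\mathcal{J}(q) \subseteq \mathcal{J}(p)$, which is exactly the desired equality.

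The delicate point is the reverse inclusion in the first step, where one must pass from normality of $\{q \circ p^{n_k}\}$ back to normality of $\{p^{n_k}\}$ itself. This is where the polynomial hypothesis on $q$ is essential: properness of $q$ (equivalently, $q(z) \to \infty$ as $z \to \infty$) lets one pull local uniform boundedness back through $q$; for a general rational $q$, whose poles lie in $\mathbb{C}$, the implication fails and the argument would need substantial modification. Once complete invariance is in hand, the minimality characterization of the Julia set closes the proof cleanly.
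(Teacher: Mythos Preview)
The paper does not supply its own proof of this classical result; it merely cites Julia and points to Theorem~4.2.9 of \cite{Beardon_book} for the general rational case. Your strategy---establish complete $q$-invariance of $\mathcal{F}(p)$ and then invoke the minimality characterization of $\mathcal{J}(q)$---is valid in outline, but two points deserve comment.

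First, a technical slip: from ``$\{p^{n_k}\}$ converging locally uniformly on $U$'' you infer ``hence locally uniformly bounded there.'' This fails when $z_0$ lies in the basin of $\infty$, where every subsequential limit is the constant $\infty$; as written, both your forward and reverse arguments are incomplete. The repair is routine (work throughout in the chordal metric, or treat the $\infty$ limit separately using $q(\infty)=\infty$). Second, and more to the point, the step you flag as ``the delicate point''---the reverse inclusion $q^{-1}(\mathcal{F}(p))\subseteq\mathcal{F}(p)$---is not needed at all. The standard argument (Beardon, Theorem~4.2.9) uses only the forward inclusion $q(\mathcal{F}(p))\subseteq\mathcal{F}(p)$, obtained via equicontinuity and the openness of $q$: iterating gives $q^n(\mathcal{F}(p))\subseteq\mathcal{F}(p)$ for all $n$, so on $\mathcal{F}(p)$ the family $\{q^n\}$ omits the infinite set $\mathcal{J}(p)$, and Montel's theorem immediately yields $\mathcal{F}(p)\subseteq\mathcal{F}(q)$; symmetry finishes. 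This shortcut bypasses both your reverse inclusion and the minimality characterization, and it works verbatim for arbitrary commuting rational maps. Your closing remark that the polynomial hypothesis on $q$ is essential is therefore misleading: it is essential only for your particular detour through properness, not for the theorem itself.
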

The above result is also true for all rational maps and a proof can be found in Theorem 4.2.9.,~\cite{Beardon_book}. A kind of converse is obtained by Baker and Eremenko.
\begin{theorem}\cite{BE1987}
If two polynomials $p$ and $q$ have the same Julia set $\mathcal{J}$ then either the polynomials commute or there exists a non-identity Euclidean isometry $\sigma$ such that $\sigma(\mathcal{J})=\mathcal{J}$.  
\end{theorem}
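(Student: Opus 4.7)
The plan is to compare the B\"ottcher coordinates of $p$ and $q$ at $\infty$ and read off the dichotomy from a single unimodular scalar. Since $\mathcal{J}$ is bounded (Lemma~\ref{bded}), the two polynomials share the basin of infinity $B_\infty$, the unbounded component of $\widehat{\mathbb{C}}\setminus\mathcal{J}$, and therefore share the Green's function $G$ of $B_\infty$ with pole at $\infty$ normalized by $G(z)=\log|z|+o(1)$. Let $\phi_p$ and $\phi_q$ be the B\"ottcher coordinates conjugating $p$ to $z\mapsto z^d$ and $q$ to $z\mapsto z^e$ in a common neighborhood $U$ of $\infty$, where $d=\deg p$ and $e=\deg q$. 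Because $|\phi_p|=e^{G}=|\phi_q|$ on $U$, the ratio $\phi_q/\phi_p$ is holomorphic and unimodular, hence a constant $\lambda$ with $|\lambda|=1$, so $\phi_q=\lambda\phi_p$ on $U$.

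Substituting $\phi_q=\lambda\phi_p$ into the functional equations $\phi_p\circ p=\phi_p^{\,d}$ and $\phi_q\circ q=\phi_q^{\,e}$ yields, on $U$,
\[
p\circ q(z)=\phi_p^{-1}\!\bigl(\lambda^{d(e-1)}\phi_p(z)^{de}\bigr),\qquad q\circ p(z)=\phi_p^{-1}\!\bigl(\lambda^{e-1}\phi_p(z)^{de}\bigr).
\]
Both sides are polynomials of degree $de$ that already agree on the open set $U$; by the identity theorem they coincide everywhere precisely when $\lambda^{(d-1)(e-1)}=1$. In that case $p$ and $q$ commute and the theorem is proved. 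Otherwise set $\mu:=\lambda^{(d-1)(e-1)}\neq 1$, and introduce the local map $\sigma(z)=\phi_p^{-1}\!\bigl(\mu\,\phi_p(z)\bigr)$ on $U$. Straight from the B\"ottcher equation one obtains the twisted commutation $p\circ\sigma_\nu=\sigma_{\nu^d}\circ p$ for every unimodular $\nu$, where $\sigma_\nu(z)=\phi_p^{-1}(\nu\phi_p(z))$; in particular $\sigma$ preserves each equipotential $\{G=c\}$ and satisfies $\sigma(z)=\mu z+O(1)$ at $\infty$.

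The remaining task, which is the crux of the proof, is to promote $\sigma$ from this local definition to a global Euclidean isometry preserving $\mathcal{J}$. The strategy is to pull $\sigma$ back along inverse branches of $p$ using the twisted-commutation identity, spreading it to a conformal self-map of $B_\infty$ that fixes $\infty$ with derivative $\mu$; then to use the fact that $\sigma$ permutes the equipotentials of $G$, which accumulate on $\mathcal{J}$, to obtain a continuous extension with $\sigma(\mathcal{J})=\mathcal{J}$; and finally to use the asymptotic $\sigma(z)=\mu z+O(1)$ at $\infty$ together with the boundedness of $\mathcal{J}$ to force $\sigma$ to be an affine map $z\mapsto\mu z+c$ with $|\mu|=1$ and $\mu\neq 1$, the desired non-trivial Euclidean isometry.

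The principal obstacle lies in this last extension-and-rigidity step when $\mathcal{J}$ is disconnected: there $\phi_p$ does not extend univalently over all of $B_\infty$, so the formula for $\sigma$ is inherently local, and one has to argue either through the Riemann surface on which $\phi_p$ naturally lives or through the invariance of the equilibrium (harmonic) measure on $\mathcal{J}$ under the induced boundary action of $\sigma$. Once this global affine extension is secured, the dichotomy of the theorem follows.
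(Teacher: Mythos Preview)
The paper does not prove this theorem; it is quoted from Baker--Er\"emenko \cite{BE1987} as part of the survey in Section~\ref{Results on poly}. So there is no ``paper's proof'' to compare against. Your B\"ottcher-coordinate set-up is exactly the classical route, and the computation leading to the criterion $\lambda^{(d-1)(e-1)}=1$ is correct.

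The genuine gap is the extension step, which you flag as the crux and leave unfinished. But the strategy you outline---pulling $\sigma$ back through inverse branches, worrying about the disconnected case, invoking equilibrium measure---is far heavier than what is needed, and as written it is not a proof. The clean way to close the argument is this: after an affine conjugation you may take $p$ monic and centered, so that $\phi_p(z)=z+O(1/z)$ and $\phi_p^{-1}(w)=w+O(1/w)$ near $\infty$. Your identity $\phi_p\circ(p\circ q)=\mu\,\phi_p\circ(q\circ p)$ then gives
\[
(p\circ q)(z)-\mu\,(q\circ p)(z)=\phi_p^{-1}\bigl(\mu\,\phi_p(w)\bigr)-\mu w\Big|_{w=(q\circ p)(z)}=O\!\left(\frac{1}{(q\circ p)(z)}\right)\longrightarrow 0
\]
as $z\to\infty$. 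The left side is a polynomial, so it vanishes identically: $p\circ q=\mu\,(q\circ p)$ on all of $\mathbb{C}$. Now the shared Green's function $G$ satisfies $G\circ(p\circ q)=G\circ(q\circ p)=de\cdot G$; since $q\circ p$ is surjective on $\mathbb{C}$, this yields $G(\mu w)=G(w)$ for every $w$. Hence the rotation $\sigma(z)=\mu z$ preserves every level set of $G$, in particular $\mathcal{J}=\partial\{G=0\}$. No Riemann-surface argument is needed, and the disconnected case is no harder than the connected one. Undoing the initial affine conjugation turns $\sigma$ into a rotation about the common centroid of $p$ and $q$.
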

After Ritt \cite{Ritt1920,Ritt1923} initiated the study on commuting rational maps, numerous subsequent studies have been conducted. Examples include references \cite{Masuda2008,Pakovich2021,Schmid1992}. If $\Sigma p$ consists of the identity only then  $\mathcal{J}(p)= \mathcal{J}(q)$ guarantees that the polynomials commute. But if the symmetry group of $p$ contains at least one non-identity element, the converse of Theorem~\ref{julia-1922}  may not be true. To see it, consider $p(z)=z^2-1$ and $q(z)=-z^2+1$. Then $q(z)=-p(-z)$ and this gives that $\mathcal{J}(q)=\sigma(\mathcal{J}(p) )$ where $\sigma(z)=-z$. Also,  $\sigma \in  \Sigma p$ by Theorem~\ref{poly-symm}. Therefore,  $\mathcal{J}(q)=\sigma(\mathcal{J}(p))=\mathcal{J}(p)$ (see Fig. \ref{same J}). However $p(q(z))=z^4-2z^2=-q(p(z))$.
	\begin{figure}[h!]
	\centering
	\includegraphics[width=0.4\linewidth]{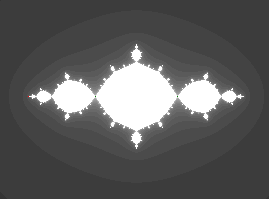}
	\caption{The Julia set of $z^2-1$ and $-z^2+1$}
	\label{same J}
\end{figure}
 More generally, we obtain the following result.
\begin{theorem}
	Let $p$ be a normalized polynomial of the form (\ref{norm}) such that $\alpha \neq 1$ and $\beta \geq 2$. If $\beta$ does not divide $(\alpha-1)^2$ then for every non-identity $\sigma \in \Sigma p$, $q :=\sigma \circ p \circ \sigma^{-1}$ has the same Julia set as that of $p$ but $q \circ p \neq p \circ q$.   
\end{theorem}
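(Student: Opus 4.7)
The first half of the conclusion comes essentially for free from Lemma~\ref{conj-JS}: since $\sigma \in \Sigma p$ we have $\mathcal{J}(q)=\sigma(\mathcal{J}(p))=\mathcal{J}(p)$. So the entire work lies in establishing $q \circ p \neq p \circ q$. My plan is to compute $q$ explicitly from the normalized form of $p$, observe that $q$ is a scalar multiple of $p$, and then see that the commutation condition collapses to a single identity among $\beta$-th roots of unity that the divisibility hypothesis is designed to forbid.

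By Theorem~\ref{poly-symm} the non-identity $\sigma$ has the form $\sigma(z)=\lambda z$ with $\lambda \neq 1$ and $\lambda^\beta = 1$. Using $p(z)=z^\alpha p_0(z^\beta)$ and the cancellation $\lambda^{-\beta}=1$ inside $p_0$,
\[
q(z)\;=\;\lambda\cdot p(\lambda^{-1}z)\;=\;\lambda\cdot \lambda^{-\alpha}z^\alpha\, p_0(\lambda^{-\beta}z^\beta)\;=\;\lambda^{1-\alpha}\,p(z).
\]
Setting $c:=\lambda^{1-\alpha}$ one gets $c^\beta=1$, so $q\circ p = c\,(p\circ p)$. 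By the same cancellation inside $p_0$ (applied now with $cw$ in place of $\lambda^{-1}z$), $p(cw) = (cw)^\alpha p_0(c^\beta w^\beta) = c^\alpha p(w)$, whence $p\circ q = c^\alpha (p\circ p)$. Because $p\circ p$ is a nonzero polynomial of degree $d^2$, the identity $p\circ q = q\circ p$ is equivalent to $c^{\alpha-1}=1$, which unfolds to $\lambda^{(1-\alpha)(\alpha-1)}=1$, i.e. $\lambda^{(\alpha-1)^2}=1$.

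The final step is to invoke the hypothesis. The assumption $\alpha \neq 1$ guarantees $(\alpha-1)^2$ is a positive integer, so the condition is non-vacuous; otherwise $c=1$, $q=p$, and commutation holds trivially. For $\lambda$ a primitive $\beta$-th root of unity, $\lambda^{(\alpha-1)^2}=1$ would force $\beta \mid (\alpha-1)^2$, contradicting the hypothesis, so commutation fails. The delicate point I expect to be the main obstacle is extending this from a single primitive $\sigma$ to \emph{every} non-identity $\sigma \in \Sigma p$: a $\lambda$ of smaller order (a divisor of $\beta$) could a priori still satisfy $\lambda^{(\alpha-1)^2}=1$, so the proof must either verify case-by-case that each divisor of $\beta$ fails to divide $(\alpha-1)^2$ under the stated hypothesis, or tighten the hypothesis to the cleaner equivalent condition $\gcd(\beta,\alpha-1)=1$, which makes the exclusion uniform.
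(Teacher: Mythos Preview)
Your computation is the paper's own argument, done more cleanly: both reduce to $q(z)=\lambda^{1-\alpha}p(z)$ and hence to the single condition $\lambda^{(\alpha-1)^2}=1$ for commutation. The paper expands $p\circ q$ and $q\circ p$ term by term, while you use the scaling identity $p(cw)=c^{\alpha}p(w)$ for $c^\beta=1$; the outcome is identical.

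The obstacle you flag at the end is genuine, and the paper's proof contains exactly the gap you anticipate. After arriving at $\lambda^{(\alpha-1)^2}=1$, the paper writes ``As $\lambda=e^{2\pi i/\beta}$\dots'' and concludes $\beta\mid(\alpha-1)^2$, silently replacing the arbitrary non-identity $\sigma$ of the statement by a primitive one. The statement, with its universal quantifier over $\sigma$, is in fact false: take $p(z)=z^{9}+z^{3}=z^{3}(z^{6}+1)$, so $\alpha=3$, $\beta=6$, $(\alpha-1)^2=4$ and $6\nmid 4$; for $\sigma(z)=-z\in\Sigma p$ one gets $q=\lambda^{1-\alpha}p=(-1)^{-2}p=p$, hence $q\circ p=p\circ q$ trivially. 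Your suggested repair $\gcd(\beta,\alpha-1)=1$ is the right one: it forces $\lambda^{1-\alpha}\neq 1$ (so $q\neq p$) and $\lambda^{(\alpha-1)^2}\neq 1$ for \emph{every} non-identity $\lambda$ with $\lambda^{\beta}=1$, since any prime dividing the order of $\lambda$ then cannot divide $(\alpha-1)^2$. Alternatively, the paper's argument does establish the weaker conclusion ``there exists a non-identity $\sigma\in\Sigma p$ with $q\circ p\neq p\circ q$,'' which is presumably what was intended.
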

\begin{proof}
 For each $\sigma\in \Sigma p$, $\mathcal{J}(p)=\sigma(\mathcal{J}(p))$. Also, as $q =\sigma \circ p \circ \sigma^{-1}$, $\mathcal{J}(q)=\sigma (\mathcal{J}(p))$ (by Theorem 3.1.4., \cite{Beardon_book}). Hence we get $\mathcal{J}(p)=\mathcal{J}(q).$
  
  Any non-identity $\sigma\in \Sigma p$ is of the form $\sigma(z)=\lambda z$, where $\lambda^\beta=1$ (by Theorem~\ref{poly-symm}). Thus,
  \begin{align*}
  q(z)&=\sigma p\left(\frac{1}{\lambda}z\right)=\sigma\left[\frac{1}{\lambda^\alpha}z^\alpha p_0(z^\beta)\right]\\
  &=\frac{1}{\lambda^{\alpha-1}}z^\alpha p_0(z^\beta).
  \end{align*}
  Therefore, 
  \begin{align*}
  p(q(z))&=p\left( \frac{1}{\lambda^{\alpha-1}}z^\alpha p_0(z^\beta)\right)=\left[\frac{1}{\lambda^{\alpha-1}}z^\alpha p_0(z^\beta)\right]^\alpha p_0\left(\left[\frac{1}{\lambda^{\alpha-1}}z^\alpha p_0(z^\beta)\right]^\beta\right)\\
  &=\frac{1}{\lambda^{\alpha(\alpha-1)}}z^{\alpha^2}[p_0(z^\beta)]^\alpha p_0\left (z^{\alpha\beta}(p_0(z^\beta))^\beta\right).
  \end{align*}
 The last equation is due to the fact that $\left(\frac{1}{\lambda^{\alpha-1}}\right)^\beta=\left(\frac{1}{\lambda^\beta}\right)^{\alpha-1}=1.$ However,
  \begin{align*}
  q(p(z))&=q(z^\alpha p_0(z^\beta))=\frac{1}{\lambda^{\alpha-1}}z^{\alpha^2}[p_0(z^\beta)]^\alpha p_0\left(z^{\alpha\beta}(p_0(z^\beta))^\beta\right).
  \end{align*}
  This implies that the polynomials $p$ and $q$ commute if and only if $ \frac{1}{\lambda^{\alpha(\alpha-1)}}=\frac{1}{\lambda^{\alpha-1}} $, which gives $\lambda^{(\alpha-1)^2}=1.$ As $\lambda =e^{\frac{2 \pi i}{\beta}}, \alpha \neq 1$ and $\beta >0$, $k>0$ and consequently, $\beta$ divides $(\alpha -1)^2$. Therefore, if  $\beta$ does not divide $(\alpha-1)^2  $ then for every non-identity $\sigma \in \Sigma p$, $\mathcal{J}(\sigma \circ p \circ \sigma^{-1}) =\mathcal{J}(p)$ but $p \circ q \neq q \circ p$. Since $\beta \geq 2$, there is a non-identity element in $\Sigma p$, and we are done.
\end{proof}
In 1990, Beardon established a necessary and sufficient condition for polynomials with the same Julia set. In a way, this is a complete description of the relation between the polynomials with identical Julia sets and the rotational symmetry of the Julia set.
\begin{theorem}(\cite{Beardon1990}) The polynomials $p$ and $q$ share the same Julia set if and only if there is some $\sigma \in \Sigma p$ such that $p\circ q=\sigma q\circ p.$
\end{theorem}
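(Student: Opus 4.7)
The plan is to reduce via Lemma~\ref{Prop1} to the case where $p$ is normalized; then every element of $\Sigma p$ is a rotation $\sigma(z)=\lambda z$ with $\lambda^{\beta}=1$ (Theorem~\ref{poly-symm}), and the theorem stated just before Remark~2.1 supplies the semiconjugacy $p\circ\sigma=\sigma^{d}\circ p$ for every $\sigma\in\Sigma p$, where $d=\deg p$.

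For the sufficiency direction, assume $p\circ q=\sigma\circ q\circ p$ with $\sigma\in\Sigma p$. I would first iterate on the left by $p$ and repeatedly apply $p\circ\sigma^{k}=\sigma^{kd}\circ p$; an easy induction yields
\begin{equation*}
p^{n}\circ q=\sigma^{k_{n}}\circ q\circ p^{n},\qquad k_{n}=1+d+d^{2}+\cdots+d^{n-1}.
\end{equation*}
Since $\sigma$ has finite order, $\{\sigma^{k_{n}}\}$ takes only finitely many values in $\Sigma p$. The heart of the argument is to show that $\mathcal{J}(p)$ is completely invariant under $q$. For forward invariance: if $z\in\mathcal{J}(p)$ then $\{p^{n}(z)\}\subseteq\mathcal{J}(p)$ is bounded, hence so are $\{q(p^{n}(z))\}$ and $\{p^{n}(q(z))\}=\{\sigma^{k_{n}}(q(p^{n}(z)))\}$, putting $q(z)\in K(p)$. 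To exclude the interior of $K(p)$, I pick $z$ with dense forward $p$-orbit (topological transitivity on $\mathcal{J}(p)$), pass to a subsequence on which $\sigma^{k_{n}}$ is a constant $\sigma^{j}$, and observe that if $q(z)$ lay in an attracting, parabolic, or Siegel component of $p$, then $p^{n}(q(z))$ would have at most one (or a controlled) accumulation point along the subsequence, forcing $q(p^{n}(z))$ to collapse similarly, which contradicts density of $\{p^{n}(z)\}$ in the perfect set $\mathcal{J}(p)$ together with continuity of $q$. Backward invariance is obtained analogously from $q(p^{n}(w))=\sigma^{-k_{n}}(p^{n}(q(w)))$, using properness of $q$ to rule out $w$ in the basin of infinity. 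Once $\mathcal{J}(p)$ is completely $q$-invariant, the standard minimality of the Julia set gives $\mathcal{J}(q)\subseteq\mathcal{J}(p)$; for the reverse, $\mathcal{J}(p)\cap\mathcal{F}(q)$ is closed, completely $q$-invariant, and contained in $\mathcal{F}(q)$, hence has at most two points---but $\mathcal{J}(p)$ is perfect and $\mathcal{F}(q)$ is open, so any such intersection would contain infinitely many points near each of its elements, forcing it to be empty. This gives $\mathcal{J}(p)=\mathcal{J}(q)$.

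For the necessity direction, assume $\mathcal{J}(p)=\mathcal{J}(q)=:J$. After normalizing $p$, I would argue that $q$ is centered as well, since the centroid of any polynomial with Julia set $J$ is forced to be the common center of rotation of $J$ (the origin, in the normalized setup). Then both $p\circ q$ and $q\circ p$ are polynomials of the common degree $de$, with $e=\deg q$, and the direct computation $(p\circ q)^{-1}(J)=q^{-1}(p^{-1}(J))=q^{-1}(J)=J$ (and similarly for $q\circ p$) shows that $J$ is completely invariant under each, giving $\mathcal{J}(p\circ q)=\mathcal{J}(q\circ p)=J$. The crucial final step is to invoke the fact that two polynomials of the same degree with the same Julia set must differ by an element of the symmetry group of that Julia set: using the B\"ottcher coordinate $\phi$ near infinity, both $p\circ q$ and $q\circ p$ conjugate to $z\mapsto z^{de}$ on the basin of infinity, and matching these conjugacies (together with the normalization of the leading coefficient) forces the two polynomials to differ by a rotation in $\Sigma p$, yielding the required $\sigma$.

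The hardest step will be the last move of the necessity direction---establishing that two polynomials of the same degree sharing a Julia set differ by a rotation in $\Sigma p$. This requires careful manipulation of the B\"ottcher coordinate, matching of leading coefficients, and a separate treatment of the case in which $J$ is a circle (where Beardon's monomial characterization must be invoked). A secondary delicate point, on the sufficiency side, is ruling out interior Fatou components of $p$ in $q(\mathcal{J}(p))$; this hinges on topological transitivity of $p$ on $\mathcal{J}(p)$ together with the pigeonhole finiteness of $\{\sigma^{k_{n}}\}$ inside $\Sigma p$.
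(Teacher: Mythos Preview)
The paper does not supply its own proof of this theorem; it is quoted from Beardon's 1990 paper as part of the survey in Section~3, so there is no in-paper argument to compare against. Your proposal must therefore be judged on its own merits.

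Your necessity direction is essentially Beardon's argument: reduce to the normalized case, observe that $\mathcal{J}(p\circ q)=\mathcal{J}(q\circ p)=\mathcal{J}(p)$ (most cleanly via the identity $G_{p}=G_{q}$ for the common Green's function, which gives $G_{p}\circ(p\circ q)=de\,G_{p}$ and hence $K(p\circ q)=K(p)$), and then apply the ``same degree, same Julia set $\Rightarrow$ differ by an element of $\Sigma p$'' principle. You correctly flag this last step as the delicate one; note that in the paper's organisation it is the \emph{later} result of Beardon~1992 (stated here as Theorem~3.6), so you are right that you must redo it via the B\"ottcher coordinate rather than quote it. One point you gloss over: your claim that $q$ is centered because ``the centroid is the common centre of rotation of $J$'' fails when $\Sigma p$ is trivial; in that case you must fall back on Baker--Er\"emenko (Theorem~3.2 here) or argue directly that the centroids coincide via the Green's function expansion at infinity.

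Your sufficiency direction has a genuine gap. The pigeonhole-plus-transitivity argument for excluding $q(z)\in\operatorname{int}K(p)$ does not go through as written: the subsequence of indices $n$ along which $\sigma^{k_{n}}$ is constant is an arithmetic progression (since $k_{n}\bmod|\sigma|$ is eventually periodic), but a positive-density subsequence of a dense $p$-orbit need \emph{not} be dense, so you cannot force $q(p^{n_{i}}(z))$ to visit many points. The clean fix is to abandon this route and use the Green's function instead: from $p^{n}\circ q=\sigma^{k_{n}}\circ q\circ p^{n}$ and $G_{p}\circ\sigma=G_{p}$ one gets $d^{n}G_{p}(q(z))=G_{p}(q(p^{n}(z)))$; letting $n\to\infty$ and comparing growth rates yields $G_{p}\circ q=e\,G_{p}$ on the basin of infinity and $G_{p}(q(z))=0$ on $K(p)$. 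This immediately gives $K(p)=K(q)$ and hence $\mathcal{J}(p)=\mathcal{J}(q)$, without any appeal to transitivity, Montel, or the ``at most two points'' claim (which, incidentally, is only valid for \emph{finite} completely invariant sets and so does not apply to $\mathcal{J}(p)\cap\mathcal{F}(q)$ as stated).
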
 

The previous four theorems are about the uniqueness of the Julia set of different polynomials. Now the uniqueness of polynomials is considered assuming some relation between their Julia sets.  In this line, there is a result by Fern\'{a}ndez.
\begin{theorem}(\cite{Fer1989})
	 Let $p$ and $q$ be polynomials of the same degree and with the same leading coefficient. If the Julia set of $p$ is disjoint from the unbounded Fatou component (i.e., the basin of $\infty$) of $q$ then $p=q.$
\end{theorem}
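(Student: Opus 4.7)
The plan is to reduce the hypothesis to an equality of the Green's functions of the filled Julia sets of $p$ and $q$, and then to upgrade this equality to $p=q$ via B\"ottcher coordinates. Throughout let $d$ denote the common degree, $a_d$ the common leading coefficient, and write $\mathcal{A}_\infty(f)$ and $K(f)=\mathbb{C}\setminus \mathcal{A}_\infty(f)$ for the basin of $\infty$ and the filled Julia set of a polynomial $f$.

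I would first establish $K(p)\subseteq K(q)$. The basin $\mathcal{A}_\infty(q)$ is an open connected subset of $\widehat{\mathbb{C}}$ (it is a Fatou component of $q$) which by hypothesis does not meet $\mathcal{J}(p)$, so it lies in one connected component of $\widehat{\mathbb{C}}\setminus \mathcal{J}(p)$; since $\infty$ belongs to it, that component must be $\mathcal{A}_\infty(p)\cup\{\infty\}$, yielding $\mathcal{A}_\infty(q)\subseteq \mathcal{A}_\infty(p)$.

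Next I would compare the Green's functions $G_p$ and $G_q$ of $K(p)$ and $K(q)$, each normalized so that $G_\bullet(z)-\log|z|\to \gamma$ as $z\to\infty$, where $\gamma=\frac{\log|a_d|}{d-1}$ is dictated by the common degree and leading coefficient alone. Let $h=G_p-G_q$. On $\mathcal{A}_\infty(q)\cup\{\infty\}$ both $G_p$ and $G_q$ are harmonic (using Step~1 for $G_p$), so $h$ is harmonic there too, with $h(\infty)=0$ by the matching asymptotics (extension across $\infty$ is by the removable-singularity theorem for harmonic functions). On the boundary $\mathcal{J}(q)$ one has $G_q=0$ and $G_p\geq 0$, so $h\geq 0$. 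The minimum principle on this domain gives $h\geq 0$ throughout, and since this minimum value $0$ is attained at the interior point $\infty$, the strong minimum principle upgrades it to $h\equiv 0$ on $\mathcal{A}_\infty(q)$. Taking limits along $\mathcal{A}_\infty(q)$ to $z_0\in \mathcal{J}(q)$ yields $G_p(z_0)=0$, so $\mathcal{J}(q)\subseteq K(p)$; re-running Step~1 with the roles of $p$ and $q$ reversed gives $K(q)\subseteq K(p)$, hence $K(p)=K(q)$ and $G_p\equiv G_q$ on $\mathbb{C}$.

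For the final step I would invoke the B\"ottcher coordinates $\phi_p,\phi_q$: analytic injections on a neighborhood of $\infty$ satisfying $\phi_\bullet \circ \bullet = \phi_\bullet^{\,d}$ and normalized by $\phi_\bullet(z)/z\to c$ for a fixed branch with $c^{d-1}=a_d$. Since $G_p=G_q$ forces $|\phi_p|=|\phi_q|$ near $\infty$, the holomorphic ratio $\phi_p/\phi_q$ has unit modulus there and is therefore a unimodular constant; the matching normalizations pin that constant to $1$, giving $\phi_p=\phi_q$. Then $\phi_p(p(z))=\phi_p(z)^d=\phi_q(z)^d=\phi_q(q(z))=\phi_p(q(z))$, and injectivity of $\phi_p$ near $\infty$ yields $p=q$ on an open set, hence everywhere by analytic continuation. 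I expect Step~2 to be the main obstacle: the maximum-principle argument requires care to handle the point at infinity (via the removable-singularity extension) and to verify that $\mathcal{A}_\infty(q)\cup\{\infty\}$ is a connected domain in $\widehat{\mathbb{C}}$ on which the strong minimum principle genuinely applies; once this is secured, Step~3 amounts to a comparison of the leading terms of $\phi_p$ and $\phi_q$.
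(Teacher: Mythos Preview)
The paper does not prove this theorem; it is merely quoted from Fern\'andez~\cite{Fer1989} as part of the survey in Section~\ref{Results on poly}, with no argument supplied. There is therefore nothing in the paper to compare your attempt against.

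That said, your outline is essentially the standard proof and is sound. Step~1 is correct. In Step~2 the key points you flag are exactly the delicate ones: $G_p$ and $G_q$ are continuous on all of $\mathbb{C}$, so $h=G_p-G_q$ extends continuously to $\mathcal{J}(q)=\partial\mathcal{A}_\infty(q)$ with boundary values $G_p\ge 0$; the singularity at $\infty$ is removable because the leading asymptotics $G_\bullet(z)=\log|z|+\tfrac{\log|a_d|}{d-1}+o(1)$ match, and then the strong minimum principle on the domain $\mathcal{A}_\infty(q)\cup\{\infty\}\subset\widehat{\mathbb{C}}$ applies. Step~3 is fine once you fix the \emph{same} $(d-1)$-st root $c$ of $a_d$ in the normalizations of $\phi_p$ and $\phi_q$; then $\phi_p/\phi_q\to 1$ at $\infty$ forces the unimodular constant to be $1$. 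One small wording issue: in Step~1 you conclude $\mathcal{A}_\infty(q)\subseteq\mathcal{A}_\infty(p)$, which is equivalent to $K(p)\subseteq K(q)$, but you might state explicitly that the relevant component of $\widehat{\mathbb{C}}\setminus\mathcal{J}(p)$ containing $\infty$ is $\mathcal{A}_\infty(p)\cup\{\infty\}$ because $\mathcal{J}(p)=\partial\mathcal{A}_\infty(p)$.
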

As a consequence, it is obtained that if two polynomials with the same degree and the same leading coefficient have the same Julia set then they are the same.
\par

Beardon revealed a beautiful connection between the polynomials with the same degree having identical Julia set.
\begin{theorem}(\cite{Beardon1992})
 Let $p$ be a polynomial of degree $d$. Any polynomial $q$ of the same degree $d$ has the same Julia set as that of $p$ if and only if $q=\sigma p$ for some $\sigma\in \Sigma p.$	 
\end{theorem}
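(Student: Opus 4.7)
My overall plan is to reduce to the case of a normalized $p$ via Lemma~\ref{Prop1} and then treat the two implications separately. The forward direction will follow quickly from Beardon's characterization stated just above together with the identity $p\circ\sigma=\sigma^d\circ p$ quoted from \cite{Beardon_book} earlier in Section~2. Indeed, assuming $q=\sigma\circ p$ for some $\sigma\in\Sigma p$, that identity gives
$$p\circ q \;=\; p\circ\sigma\circ p \;=\; \sigma^d\circ p\circ p \;=\; \sigma^{d-1}\circ(\sigma\circ p)\circ p \;=\; \sigma^{d-1}\circ q\circ p.$$
Since $\Sigma p$ is a group, $\tau:=\sigma^{d-1}$ lies in $\Sigma p$ (or is the identity, in which case $p$ and $q$ commute), so the preceding characterization yields $\mathcal{J}(p)=\mathcal{J}(q)$.

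For the converse, assume $\deg p=\deg q=d$ and $\mathcal{J}(p)=\mathcal{J}(q)$. The basins of attraction of $\infty$ then coincide, and the Green's function of this basin with pole at $\infty$ depends only on the basin, so the Green's functions of $p$ and $q$ agree; call it $G$. The relations $G\circ p=dG=G\circ q$ give $G(q(z))=G(p(z))$ for every $z$. Letting $\phi_p$ denote the B\"ottcher coordinate of $p$ in a neighborhood of $\infty$ (so $\phi_p\circ p=\phi_p^d$ and $\log|\phi_p|=G$), the quotient $\phi_p(q(z))/\phi_p(p(z))$ is holomorphic and of constant modulus $1$, hence equals a constant $\mu$ with $|\mu|=1$. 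Thus $q(z)=\phi_p^{-1}(\mu\,\phi_p(p(z)))$ near $\infty$. Defining $\sigma(w):=\phi_p^{-1}(\mu\,\phi_p(w))$ and using the asymptotic $\phi_p(w)\sim a_d^{1/(d-1)}w$ at $\infty$, I obtain $\sigma(w)\sim\mu w$, so $\sigma$ extends to an entire function with a simple pole at $\infty$, i.e., an affine map, and the identity $q=\sigma\circ p$ holds on an open set between polynomials of equal degree, hence on all of $\mathbb{C}$.

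Finally, the complete invariance of the Julia set under each polynomial gives
$$p^{-1}\bigl(\sigma^{-1}(\mathcal{J}(p))\bigr) \;=\; q^{-1}(\mathcal{J}(q)) \;=\; \mathcal{J}(q) \;=\; \mathcal{J}(p) \;=\; p^{-1}(\mathcal{J}(p)),$$
and applying the surjection $p$ to both sides yields $\sigma^{-1}(\mathcal{J}(p))=\mathcal{J}(p)$. Since $\mathcal{J}(p)$ is bounded (Lemma~\ref{bded}), the affine $\sigma$ cannot be a nontrivial translation, so it is either the identity (giving $q=p$) or a rotation about $\xi(p)$; in either case $\sigma\in\Sigma p$. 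The main obstacle I anticipate is the globalization step in the converse: extracting from the B\"ottcher identity valid only near $\infty$ a genuine affine $\sigma$ defined on all of $\mathbb{C}$ with $q=\sigma\circ p$ as a polynomial identity. Everything else is a clean application of the normalization lemma, Beardon's preceding characterization, and the boundedness of $\mathcal{J}(p)$.
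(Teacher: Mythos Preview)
The paper does not supply its own proof of this statement; it is quoted from \cite{Beardon1992} as part of the survey in Section~\ref{Results on poly}. Your outline is essentially Beardon's original argument: the forward direction via the functional identity $p\circ\sigma=\sigma^d\circ p$ combined with the 1990 characterization, and the converse via the Green's function and the B\"ottcher coordinate at $\infty$. So there is nothing to compare against within this paper, and your route is the standard one.

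On the globalization step you flagged: the sentence ``$\sigma$ extends to an entire function with a simple pole at $\infty$'' does not stand on its own, because $\sigma=\phi_p^{-1}\circ(\mu\,\cdot)\circ\phi_p$ is defined only on a neighborhood of $\infty$ and you have given no mechanism for analytic continuation. The clean way to close this is to expand $\sigma(w)=\mu w+b_0+b_{-1}w^{-1}+\cdots$ near $\infty$, substitute into the local identity $q(z)=\sigma(p(z))$, and read off
\[
q(z)-\mu\,p(z)-b_0 \;=\; b_{-1}\,p(z)^{-1}+\cdots \;\longrightarrow\; 0 \qquad (z\to\infty).
\]
The left-hand side is a genuine polynomial (difference of polynomials and a constant) with a finite limit at $\infty$, hence identically zero. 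Thus $q=\mu p+b_0$ holds on all of $\mathbb{C}$ and the affine map $\sigma(w)=\mu w+b_0$ is forced; in particular all the negative-index Laurent coefficients vanish. With $\sigma$ now globally affine, your final paragraph (using $p^{-1}(\sigma^{-1}(\mathcal J(p)))=p^{-1}(\mathcal J(p))$, surjectivity of $p$, and Lemma~\ref{bded} to exclude a nontrivial translation) is correct and yields $\sigma\in\Sigma p$.
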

If $\Sigma p$ is trivial then the above theorem gives that if $q$ is a polynomial with the same degree and with the same Julia set as that of $p$ then $q=p$.

We conclude with a result with the same spirit by Schmidt and Steinmetz~\cite{SS1995}.
\begin{theorem}
	 Let $\mathcal{J}$ be a Julia set of a polynomial  which is neither a circle nor a line segment. Then there exists a polynomial $p$ such that any polynomial $q$ with the Julia set $\mathcal{J}$ can be written in the form $q(z)=\sigma p^n(z)$, where $\sigma$ is a rotation (including identity) with $\sigma(\mathcal{J})=\mathcal{J}$, and $n$ is a natural number. 
\end{theorem}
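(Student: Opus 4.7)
The plan is to take $p$ to be a polynomial of smallest possible degree with $\mathcal{J}(p)=\mathcal{J}$, and then show that every polynomial $q$ with $\mathcal{J}(q)=\mathcal{J}$ agrees, up to a rotational symmetry of $\mathcal{J}$, with some iterate of $p$. Set $\mathcal{P}=\{q : q \text{ a polynomial with } \mathcal{J}(q)=\mathcal{J}\}$; by hypothesis $\mathcal{P}$ is non-empty and consists of polynomials of degree at least $2$. Let $d=\min\{\deg q : q\in \mathcal{P}\}$ and fix $p\in \mathcal{P}$ with $\deg p=d$. Since $\mathcal{J}(p^n)=\mathcal{J}(p)=\mathcal{J}$, every iterate $p^n$ belongs to $\mathcal{P}$.

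The crux of the argument is the claim that $\deg q \in \{d^n : n\in \mathbb{N}\}$ for every $q\in \mathcal{P}$. Granting this, pick $n$ so that $\deg q=d^n$; then $p^n$ and $q$ have the same degree $d^n$ and the same Julia set $\mathcal{J}$, so by the preceding same-degree theorem of Beardon (1992) there is some $\sigma\in \Sigma p^n$ with $q=\sigma\,p^n$. Because $\Sigma p^n$ depends only on $\mathcal{J}(p^n)=\mathcal{J}$, the element $\sigma$ is a rotation preserving $\mathcal{J}$, which is the desired representation.

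To establish the claim, I would compare B\"ottcher coordinates at $\infty$. Since $\mathcal{J}(p)=\mathcal{J}(q)$, the basin $\mathcal{A}$ of $\infty$ is common to $p$ and any $q\in\mathcal{P}$. The B\"ottcher coordinate $\phi$ of $p$ near $\infty$ conjugates $p$ to $\zeta\mapsto \zeta^d$. The map $g:=\phi\circ q\circ\phi^{-1}$ is then a proper holomorphic self-map of a punctured neighborhood of $\infty$, hence of the form $g(\zeta)=c\,\zeta^{m}$ with $m=\deg q$ and $|c|=1$. Feeding this into Beardon's commutation identity $p\circ q=\sigma\circ q\circ p$ (from the characterization theorem above) and reading off exponents forces $m$ to be a power of $d$; any other divisibility of $m$ would produce extra commutators of $\zeta\mapsto \zeta^d$ in the polynomial semigroup, which occurs only when $p$ is affinely conjugate to a power map or to a Chebyshev polynomial, i.e., only when $\mathcal{J}$ is a circle or a line segment. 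Both cases are excluded by hypothesis.

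The main obstacle is the claim itself. The short B\"ottcher argument sketched above is really a disguised appeal to Ritt's classification of commuting polynomials, adapted to the ``commutation up to a symmetry of the Julia set'' framework provided by Beardon's theorem; it is precisely at this point that the exclusion of circles and line segments is indispensable, since those are exactly the Julia sets that admit polynomials of arbitrary degree (monomials and Chebyshev polynomials respectively). Once the degree structure $\deg q\in\{d^n:n\in\mathbb{N}\}$ has been pinned down, the conclusion follows from a single application of the same-degree Beardon theorem already quoted in the paper.
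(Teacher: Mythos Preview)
The paper does not supply its own proof of this theorem; it is quoted from Schmidt and Steinmetz \cite{SS1995} without argument, so there is nothing to compare your attempt against on that front. Your overall plan---pick $p$ of minimal degree $d$, show every degree occurring in $\mathcal{P}$ is a power of $d$, then invoke Beardon's same-degree theorem---is exactly the Schmidt--Steinmetz strategy, and the final step is correctly carried out.

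The gap is in the justification of the key degree claim. Your sketch reads the identity $p\circ q=\sigma\circ q\circ p$ in B\"ottcher coordinates and asserts that ``reading off exponents forces $m$ to be a power of $d$''. It does not. With $\phi\circ p=\phi^{d}$, $\phi\circ q=c\,\phi^{m}$ and $\phi\circ\sigma=\lambda\,\phi$, the two sides of $p\circ q=\sigma\circ q\circ p$ become $c^{d}\phi^{md}$ and $\lambda c\,\phi^{dm}$; the exponents match automatically and one only obtains $c^{d-1}=\lambda$, which is a relation among constants and says nothing about $m$. So the commutation-up-to-symmetry identity by itself carries no degree information.

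What repairs the argument is a step you omit: first normalize every element of $\mathcal{P}$ to be monic and centered. By Fern\'andez/Beardon there is then a \emph{unique} polynomial $q_{m}\in\mathcal{P}$ of each occurring degree $m$, and (comparing leading terms) the common B\"ottcher map satisfies $\phi\circ q_{m}=\phi^{m}$ with no constant out front. Hence $q_{m}\circ q_{n}=q_{mn}=q_{n}\circ q_{m}$: the normalized polynomials genuinely commute. Now Ritt's theorem applies directly, and since $\mathcal{J}$ is neither a circle nor a segment the power-map and Chebyshev alternatives are excluded; thus every $q_{m}$ is an iterate of a common polynomial $r\in\mathcal{P}$, and minimality of $d$ forces $r=p$ and $m\in\{d^{n}:n\ge 1\}$. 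Once you insert this normalization-and-uniqueness step to get honest commutation, your outline becomes a complete proof.
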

 
\section{Symmetries of rational Julia sets}
This section deals with symmetries of rational maps that are not polynomials. Recall that   $\mathcal{M}(R)=\{\phi:~\phi~\mbox{is a M\"{o}bius map such that }~ \phi(\mathcal{J}(R))=\mathcal{J}(R)\}.$ The set $\mathcal{M}(R)$ is respected by conformal conjugacy in the same way as the affine conjugacies respect the set of Euclidean isometries of polynomial Julia sets (see Lemma \ref{Prop1}).
\begin{lemma}\label{Prop2}
	If two rational maps $R$ and $S$ are conjugate with the M\"{o}bius map $\phi$, such that $S=\phi\circ R\circ \phi^{-1}$, then $\mathcal{M}(S)=\phi\circ (\mathcal{M}(R))\circ \phi^{-1}$.
\end{lemma}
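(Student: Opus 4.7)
The plan is to prove the set equality by showing both inclusions, leaning on two simple facts: composition of Möbius maps is Möbius, and conjugate rational maps have Julia sets related by the conjugating Möbius map (Lemma~\ref{conj-JS}). From $S=\phi\circ R\circ \phi^{-1}$ and Lemma~\ref{conj-JS}, the first thing I would record is the identity $\mathcal{J}(S)=\phi(\mathcal{J}(R))$, equivalently $\phi^{-1}(\mathcal{J}(S))=\mathcal{J}(R)$. Every step after that is a pure set-theoretic chase.

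For the inclusion $\phi\circ \mathcal{M}(R)\circ \phi^{-1}\subseteq \mathcal{M}(S)$, I would pick an arbitrary $\psi\in\mathcal{M}(R)$ and set $\eta=\phi\circ\psi\circ\phi^{-1}$. Then $\eta$ is Möbius (composition of Möbius maps), and applying it to $\mathcal{J}(S)$ gives
\[
\eta(\mathcal{J}(S))=\phi\!\left(\psi\!\left(\phi^{-1}(\mathcal{J}(S))\right)\right)=\phi(\psi(\mathcal{J}(R)))=\phi(\mathcal{J}(R))=\mathcal{J}(S),
\]
so $\eta\in\mathcal{M}(S)$. For the reverse inclusion, I would take $\eta\in\mathcal{M}(S)$ and show $\phi^{-1}\circ\eta\circ\phi\in\mathcal{M}(R)$ by the symmetric computation
\[
(\phi^{-1}\circ\eta\circ\phi)(\mathcal{J}(R))=\phi^{-1}(\eta(\mathcal{J}(S)))=\phi^{-1}(\mathcal{J}(S))=\mathcal{J}(R);
\]
then $\eta=\phi\circ(\phi^{-1}\circ\eta\circ\phi)\circ\phi^{-1}\in\phi\circ\mathcal{M}(R)\circ\phi^{-1}$, completing the equality.

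There is no genuine obstacle in this argument: the statement is an instance of the general principle that conjugation transports Möbius stabilizers of Julia sets, and everything is forced once Lemma~\ref{conj-JS} is invoked. The only point worth a sentence of emphasis in the write-up is that $\phi\circ\psi\circ\phi^{-1}$ lies in $\mathcal{M}(S)$ rather than merely being a homeomorphism preserving $\mathcal{J}(S)$, which is immediate since Möbius maps form a group under composition.
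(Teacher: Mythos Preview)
Your argument is correct and is exactly the natural one: invoke Lemma~\ref{conj-JS} to get $\mathcal{J}(S)=\phi(\mathcal{J}(R))$ and then chase both inclusions, using that M\"{o}bius maps form a group. The paper itself states Lemma~\ref{Prop2} without proof, so there is nothing further to compare.
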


\subsection{Rational maps  with an exceptional point}

Polynomials are rational maps with at least one exceptional point. It is well-known that a rational map has at most two exceptional points. If it has exactly two exceptional points then it is conjugate to $z^d$ for some non-zero integer $d$ (Theorem 4.1.2, \cite{Beardon_book}). To see it, 	Let $\zeta_1$ and $\zeta_2$ be the two exceptional points of $R$. Then, there exists a complex number $a\in \mathbb{C}$ such that for the M\"{o}bius map $\phi(z)=a\frac{z-\zeta_1}{z-\zeta_2}$, the function $p(z)=\phi \circ R \circ \phi^{-1}(z)$ is $z^d$ for some non-zero integer $d$. In both the cases, $\mathcal{J}(p)$ is the unit circle. Further, $\mathcal{J}(R)=\phi^{-1}(\mathcal{J}(p))$ by Lemma~\ref{conj-JS}. Here $\phi^{-1}(z)=\frac{\zeta_2 z-a\zeta_1}{z-a}$ and has its pole at $a$. Hence the Julia set of $R$ is either a circle (if $|a|\neq 1$) or a straight line ($|a|=1$). This is so because the image of the unit circle under any M\"{o}bius map is either a circle or a straight line, and it is a straight line if and only if the pole of the map is on the unit circle.
%
\par 
If a rational map $R$ has exactly one exceptional point, say $w$ then for $h(z)=z-w$, $h \circ R \circ h^{-1}$ is a rational map whose only exceptional point is $0$.   A rotation  about $w$ preserves $\mathcal{J}(R)$ if and only if a rotaion by the same angle about the origin preserves $\mathcal{J}(h \circ R \circ h^{-1})$. Since the rotational symmetries are the main concern of this article,  we assume without loss of generality that the exceptional point of $R$ is $0$. Since $0$ is the only exceptional point of $R$, $R^{-1}(0)=\{0\}$ and therefore, $R$ is of the form
\begin{equation}\label{exceptional-form}
R(z)=\frac{z^d}{a_0z^d+a_1z^{d-1}+\dots+a_{d-1}z+a_d},
\end{equation}
where $a_d\neq 0$.
 As $R$ has exactly one exceptional point it  is conjugate to a (non-monomial) polynomial and hence it is conjugate to a normalized polynomial. Two different  polynomials that are conjugate to $R$ are conjugate to the same normalized polynomial. Let  $\beta$ be the order of the  symmetry group of this normalized polynomial. This $\beta$ is a property of $R$, and we call it the order of rotational symmetries of $R$.  
  The following theorem finds all M\"{o}bius maps, arising out of this normalized polynomial and preserving the Julia sets of $R$.
 
\begin{theorem}\label{Conj to poly}
	If $R$ is rational map with only one exceptional point, that is $0$  and is of the form (\ref{exceptional-form}) then  $ \mathcal{M}(R) $ contains the set $\{z\mapsto \frac{z}{\zeta(1-\lambda)z+\lambda}:\lambda^\beta=1\}$, where $\zeta=-\frac{a_{d-1}}{da_d}$ and $\beta$ is the order of rotational symmetries of $R$.
\end{theorem}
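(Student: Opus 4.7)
The plan is to reduce everything to the polynomial case by conjugating $R$ via the M\"obius map $\phi(z)=1/z$, which sends the exceptional point $0$ to $\infty$. A direct computation will show that
$$q := \phi\circ R\circ\phi^{-1}(z)=\frac{1}{R(1/z)}=a_dz^d+a_{d-1}z^{d-1}+\cdots+a_0,$$
so that $q$ is a polynomial of degree $d$, and its centroid is exactly $-a_{d-1}/(d\,a_d)=\zeta$. The exceptional point of $R$ has been turned into the $\infty$ for $q$, and $\mathcal{J}(R)=\phi^{-1}(\mathcal{J}(q))$ by Lemma~\ref{conj-JS}.

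Next, I would apply the polynomial theory developed in Section~2. By Lemma~\ref{Prop1}, there is an affine map $\psi(z)=Az+\zeta$ (with $A^{d-1}=1/a_d$) such that $p=\psi^{-1}\circ q\circ\psi$ is normalized. By definition, $\beta$ is the order of rotational symmetries of $R$, i.e., of the normalized polynomial $p$, so Theorem~\ref{poly-symm} gives
$$\Sigma p=\{z\mapsto \lambda z:\lambda^{\beta}=1\}.$$
Conjugating back by $\psi$ via Lemma~\ref{Prop1} yields
$$\Sigma q=\{z\mapsto\lambda(z-\zeta)+\zeta:\lambda^{\beta}=1\}\subseteq\mathcal{M}(q).$$

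Finally, since $q=\phi\circ R\circ\phi^{-1}$, Lemma~\ref{Prop2} tells us that $\mathcal{M}(R)=\phi^{-1}\circ\mathcal{M}(q)\circ\phi$, so each $\sigma\in\Sigma q$ produces an element $\phi^{-1}\circ\sigma\circ\phi$ of $\mathcal{M}(R)$. The remaining step is just the algebraic verification: for $\sigma(z)=\lambda(z-\zeta)+\zeta$,
$$\phi^{-1}\circ\sigma\circ\phi(z)=\frac{1}{\sigma(1/z)}=\frac{1}{\lambda/z-\lambda\zeta+\zeta}=\frac{z}{\lambda+\zeta(1-\lambda)z}=\frac{z}{\zeta(1-\lambda)z+\lambda},$$
which is exactly the form claimed in the theorem.

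There is no genuine obstacle here; the whole argument is a bookkeeping exercise that chains together Lemma~\ref{conj-JS}, Lemma~\ref{Prop1}, Theorem~\ref{poly-symm}, and Lemma~\ref{Prop2}. The one small place where care is needed is checking that the two affine coefficients line up: the centroid of the polynomial $q$ obtained after the $\phi$-conjugacy must be shown to equal the $\zeta$ stated in the theorem, and the leading-coefficient rescaling built into $\psi$ must not disturb the form of $\Sigma q$ (which it does not, as centeredness alone suffices by the Remark following Theorem~\ref{poly-symm}).
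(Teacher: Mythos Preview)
Your proof is correct and follows essentially the same route as the paper: conjugate $R$ to a polynomial via a M\"obius map sending $0$ to $\infty$, pass to the normalized polynomial to read off $\Sigma p$, and conjugate back. The only cosmetic difference is that the paper carries a general conjugacy $\varphi(z)=\frac{Az+B}{z}$ through the computation (to emphasize that the resulting set is independent of the choice of $\varphi$), whereas you make the natural specific choice $\phi(z)=1/z$ from the outset; this is entirely sufficient for the theorem as stated.
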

\begin{proof}
	For any M\"{o}bius map $\varphi$, if $\varphi \circ R\circ \varphi^{-1}$ is a polynomial, then $\varphi(0)=\infty$ and  $\varphi(z)=\frac{Az+B}{z}$ for some  $A, B \in \mathbb{C} $ and $ B \neq 0$. Let $p_1(z)=\varphi\circ R\circ \varphi^{-1}(z)$. Then
	\begin{align*}
	p_1(z)&=\varphi\circ R\left(\frac{B}{z-A}\right)\\
	&=\varphi\left(\frac{B^d}{a_d(z-A)^d+Ba_{d-1}(z-A)^{d-1}+\dots+a_0B^d}\right)\\
	&=\frac{(a_d(z-A)^d+Ba_{d-1}(z-A)^{d-1}+\dots+a_0B^d)+AB^{d-1}}{B^{d-1}}.
	\end{align*}
	Here  the centroid of $p_1$ is $\xi=-\frac{-da_dA+Ba_{d-1}}{da_d}=A+B\zeta$ where $ \zeta$ is as given in the statement of this theorem. Now consider the affine map $\psi(z)=\alpha z+\xi$, where $\alpha^{d-1}=  \frac{B^{d-1}}{a_d}$. Then $p(z)=\psi^{-1}\circ p_1\circ\psi (z)$ is a normalized polynomial. Let $p(z)=z^\alpha p_0(z^\beta)$ where $\alpha$ and $\beta$ are maximal for this expression. Then $p(z)=T\circ R\circ T^{-1}(z)$, where $T(z)=\psi^{-1}\circ \varphi(z)=\psi^{-1}\left(\frac{Az+B}{z}\right)=\frac{(A-\xi)z+B}{\alpha z}$.
	\begin{figure}[h!]
		\centering
		\includegraphics[width=0.5\linewidth]{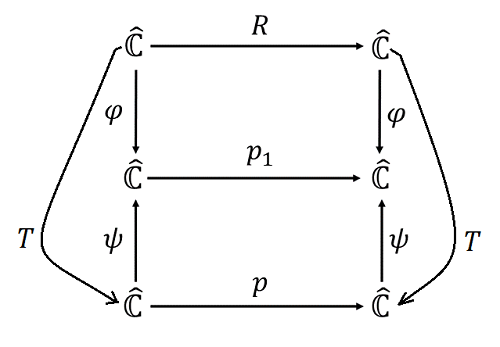}
		\caption{The conjugacy}
	\end{figure}
If $\sigma\in \Sigma p$, where $\sigma(z)=\lambda z$ then  $\lambda^\beta=1$, $T^{-1}\circ\sigma\circ T\in \mathcal{M}(R)$. Now,
	\begin{align*}
	T^{-1}\circ\sigma\circ T(z)&=T^{-1}\circ\sigma\left(\frac{(A-\xi)z+B}{\alpha z}\right)=T^{-1}\left(\lambda \frac{(A-\xi)z+B}{\alpha z}\right)\\
	&=\frac{B}{\alpha \lambda\left(\frac{(A-\xi)z+B}{\alpha z}\right)-A+\xi}=\frac{Bz}{(1-\lambda)(\xi-A)z+\lambda B}\\
	&=\frac{z}{(1-\lambda)\zeta z+\lambda}.
	\end{align*}
	This gives that $\{z\mapsto \frac{z}{\zeta(1-\lambda)z+\lambda}:\lambda^\beta=1\}\subseteq\mathcal{M}(R)$.
\end{proof}
Here are few remarks.
\begin{Remark}
	\begin{enumerate}
	\item Observe that $\zeta$ is the centroid of the polynomial $\frac{1}{R(\frac{1}{z})}$.
	\item  The set $\{z\mapsto \frac{z}{\zeta(1-\lambda)z+\lambda}:\lambda^\beta=1\}$ is independent of the choice of the M\"{o}bius maps that conjugate $R$ to a normalized polynomial.
	\end{enumerate}
\end{Remark}
  This theorem guarantees that, if $R$ is conjugate to a polynomial whose Julia set is invariant under a rotation, then $\mathcal{M}(R)$ contains a non-identity M\"{o}bius map. In a particular, we get the following.
\begin{corollary}\label{Equality_R}
	If $a_{d-1}=0$ in the Equation~\ref{exceptional-form} then $\{z\mapsto\lambda z:\lambda^\beta=1\}\subseteq\mathcal{M}(R)$.
\end{corollary}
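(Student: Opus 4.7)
The plan is to derive the corollary as an immediate specialization of Theorem \ref{Conj to poly}. The key observation is that the condition $a_{d-1}=0$ forces the quantity $\zeta=-\frac{a_{d-1}}{da_d}$ appearing in that theorem to vanish, and substituting $\zeta=0$ into the family $\{z\mapsto \frac{z}{\zeta(1-\lambda)z+\lambda}:\lambda^\beta=1\}$ collapses each M\"obius map to a linear rotation about the origin.

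More concretely, I would first note that under the hypothesis $a_{d-1}=0$, the rational map $R$ in the form (\ref{exceptional-form}) has $\zeta=0$, so Theorem \ref{Conj to poly} yields
\begin{equation*}
\left\{z\mapsto \frac{z}{\lambda}:\lambda^\beta=1\right\}\subseteq \mathcal{M}(R).
\end{equation*}
The second step is then to observe that the reciprocation $\lambda\mapsto \lambda^{-1}$ is a bijection of the group of $\beta$-th roots of unity onto itself. Therefore, rewriting $\mu=\lambda^{-1}$ (which satisfies $\mu^\beta=1$) gives
\begin{equation*}
\left\{z\mapsto \frac{z}{\lambda}:\lambda^\beta=1\right\}=\{z\mapsto \mu z:\mu^\beta=1\},
\end{equation*}
which is precisely the set claimed to lie in $\mathcal{M}(R)$.

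There is essentially no obstacle: the statement is a direct consequence of the main theorem of the subsection together with the trivial reindexing of roots of unity. The only point worth flagging is the conceptual one, namely that the vanishing of $a_{d-1}$ corresponds to the centroid of the conjugate polynomial $\frac{1}{R(1/z)}$ being at the origin (as recorded in the first remark), which geometrically means that the rotational symmetries of the Julia set of $R$ are centered at the origin rather than at some other finite point of $\mathbb{C}$.
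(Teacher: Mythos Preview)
Your proposal is correct and is exactly the argument the paper has in mind: the corollary is stated immediately after Theorem~\ref{Conj to poly} with no separate proof, being an obvious specialization obtained by setting $\zeta=-\frac{a_{d-1}}{da_d}=0$ in the family $\{z\mapsto \frac{z}{\zeta(1-\lambda)z+\lambda}\}$. Your explicit reindexing $\mu=\lambda^{-1}$ of the $\beta$-th roots of unity is a harmless detail the paper leaves implicit.
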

Further, if we consider $\Sigma R$ then applying few more restrictions we get the equality in the relation between $\Sigma p$ and $\Sigma R$.
\begin{corollary}
	If $a_{d-1}=0$ in the Equation~\ref{exceptional-form}, $\beta\geq 2$ and $\mathcal{J}(R)$ is not translation invariant then $\{z\mapsto\lambda z:\lambda^\beta=1\}=\Sigma R$.
\end{corollary}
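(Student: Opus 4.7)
The plan is to prove both inclusions, drawing on results already established. The inclusion $\{z\mapsto \lambda z:\lambda^\beta=1\}\subseteq \Sigma R$ is immediate from Corollary \ref{Equality_R}: each map $z\mapsto\lambda z$ with $|\lambda|=1$ is a Euclidean rotation about the origin, hence a legitimate element of $\Sigma R$. The hypothesis $\beta\geq 2$ ensures this inclusion is non-trivial, providing a non-identity rotation that will be used as a witness below.

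For the reverse inclusion, I would first show every $\sigma\in\Sigma R$ must be a rotation about $0$. Write $\sigma(z)=az+b$ with $|a|=1$ and $a\neq 1$, so $\sigma$ is a rotation about $c=b/(1-a)$. Since $\beta\geq 2$, Corollary \ref{Equality_R} supplies a non-identity $\tau(z)=\lambda z$ with $\lambda^\beta=1$ in $\Sigma R$. A direct calculation of the group commutator, exactly in the spirit of the proof of Lemma \ref{Rotation}, yields
$$\sigma\circ\tau\circ\sigma^{-1}\circ\tau^{-1}(z)=z+b(1-\lambda),$$
a Euclidean translation that is non-trivial whenever $b\neq 0$ (recall $\lambda\neq 1$). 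This composition preserves $\mathcal{J}(R)$, so the hypothesis that $\mathcal{J}(R)$ is not translation invariant forces $b=0$, giving $\sigma(z)=az$.

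To conclude, I would pin down the admissible multipliers $a$ by transporting $\sigma$ through the conjugacy $T$ constructed in the proof of Theorem \ref{Conj to poly}. Under $a_{d-1}=0$ we have $\zeta=0$, hence $\xi=A$, and the map $T(z)=((A-\xi)z+B)/(\alpha z)$ degenerates to $T(z)=B/(\alpha z)$, which is essentially its own inverse. A short computation then gives $T\circ\sigma\circ T^{-1}(z)=z/a$, a Euclidean rotation that preserves $\mathcal{J}(p)$, where $p=T\circ R\circ T^{-1}$ is the normalized polynomial associated to $R$. Applying Theorem \ref{poly-symm} yields $(1/a)^\beta=1$, i.e., $a^\beta=1$, which completes the reverse inclusion.

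The principal subtlety is the commutator step, which must be run under the weaker hypothesis that $\mathcal{J}(R)$ merely lacks translational symmetry rather than being bounded (as was automatic in the polynomial setting of Lemma \ref{Rotation}). Both conditions $\beta\geq 2$ and the translation-non-invariance of $\mathcal{J}(R)$ are used essentially: the former supplies the witness $\tau$ that makes the commutator non-trivial, and the latter furnishes the contradiction that rules out rotations about points other than the origin. No new tools beyond Corollary \ref{Equality_R}, Theorem \ref{Conj to poly}, and Theorem \ref{poly-symm} are needed.
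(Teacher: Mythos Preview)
Your proof is correct and follows essentially the same route as the paper's. Both arguments obtain the forward inclusion from Corollary~\ref{Equality_R}, use the commutator idea of Lemma~\ref{Rotation} (which the paper invokes only implicitly via the remark in the introduction) together with the non-translation-invariance hypothesis to force every $\sigma\in\Sigma R$ to be a rotation about the origin, and then conjugate by the inversion to land in $\Sigma p$ and appeal to Theorem~\ref{poly-symm}. The only cosmetic difference is that the paper works directly with $\phi(z)=1/z$ and $p(z)=1/R(1/z)$ (a centered polynomial when $a_{d-1}=0$, covered by the Remark after Theorem~\ref{poly-symm}), whereas you route through the conjugacy $T$ of Theorem~\ref{Conj to poly}, which for $\zeta=0$ degenerates to a scalar multiple of $1/z$; the two are equivalent.
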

\begin{proof}
	Let $p(z)=\frac{1}{R(\frac{1}{z})}$. The relation $\Sigma p \subseteq \Sigma R$ follows from Corollary \ref{Equality_R}. As $\mathcal{J}(R)$ is not translation invariant, $\Sigma R$ contains rotations about origin. Let there is a rotation $ \sigma(z)=\lambda z $, $|\lambda|=1$ of order $\beta_1>\beta$. Now $\phi \circ \sigma \circ \phi^{-1}(z)=\frac{z}{\lambda}$, where $\phi(z)=\frac{1}{z}$, is a rotation of order $\beta_1$ and as $\mathcal{J}(p)=\phi(\mathcal{J}(R))$, we get $\phi \circ \sigma \circ \phi^{-1}\in \Sigma p$. But it contradicts the maximality of $\beta$ in the expression of $p$. Hence, $\{z\mapsto\lambda z:\lambda^\beta=1\}=\Sigma R$.
\end{proof}
Here are two examples of rational maps $\frac{3z^3}{3-z^3}$ and $\frac{z^3}{1-2z^2}$ that are conjugate to $z^3 -\frac{1}{3} $ and $z^3 -2z$ respectively via the M\"{o}bius map $z\mapsto \frac{1}{z}$. For both the cases $\beta=3$, giving that the Julia sets of the mentioned rational maps are not translation invariant. Their Julia sets are provided in Fig.~\ref{JS-form2} as the boundary of different colors (of blue and yellow in (a) and of red and green in (b)).

%
%
%
%
%
\begin{figure}[h!]
	\begin{subfigure}{.5\textwidth}
		\centering
		\includegraphics[width=0.98\linewidth]{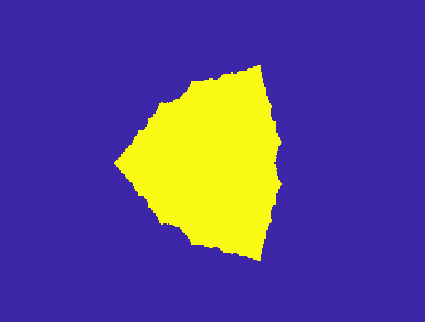}
		\caption{Julia set of $R(z)=\frac{3z^3}{3-z^3}\sim z^3-\frac{1}{3} $}
	\end{subfigure}%
	\begin{subfigure}{.5\textwidth}
		\centering
		\includegraphics[width=0.98\linewidth]{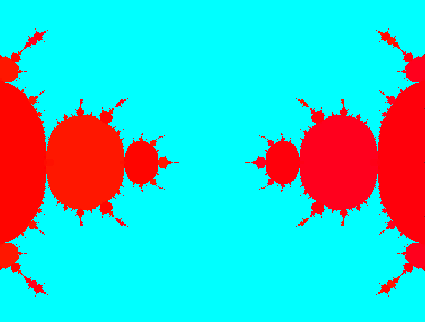}
		\caption{Julia set of $R(z)=\frac{z^3}{1-2z^2}\sim z^3-2z $}
	\end{subfigure}
\caption{Julia sets of $\frac{3z^3}{3-z^3}$ and $\frac{z^3}{1-2z^2}$}
\label{JS-form2}
\end{figure}
\subsection{Two new classes of rational maps}
A rational map $R$ is said to be exceptional if $\mathcal{J}(R)$ is the whole extended complex plane $\widehat{\mathbb{C}}$, a circle  or a line segment in $\widehat{\mathbb{C}}$. This definition is upto conformal conjugacy. In other words, each rational map which is conformally conjugate to an exceptional rational map is also exceptional and it can have an arc of a circle or a straight line as its Julia set. Examples of exceptional rational maps are well-known, e.g., the Julia set of  $\frac{z^2-2}{z^2}$ and $\frac{z^2-2}{2z}$ are $\widehat{\mathbb{C}}$ and $\mathbb{R} \cup \{\infty\}$ respectively (see Remark~\ref{exceptionalrationals}). The Julia set of every Chebyshev polynomial is $[-1,1]$ (page 11, \cite{Beardon_book}).


A  M\"{o}bius map $\phi$ is said to be of order $\beta$ if $\phi^{\beta}$, $\beta$ times composition of $\phi$ with itself, is the identity map. An irrational rotation (i.e., rotation by an angle of an irrational multiple of $2 \pi$) and a translation are  with infinite order whereas a rational rotation (i.e., rotation by an angle of a rational multiple of $2 \pi$) is of finite order. Note that $\widehat{\mathbb{C}}$ is preserved by every  M\"{o}bius map and straight lines are preserved by translations whereas  circles are preserved by irrational rotations.  These facts probably motivate a conjecture mentioned in  ~\cite{Boyd2000} which states that if there is a M\"{o}bius map of infinite order preserving the Julia set of $R$ then $R$ is exceptional. In 2000, 
 Boyd settles it partially by proving the following.
\begin{theorem}[\cite{Boyd2000}]
Let $R$ be a rational map of degree at least two such that 
$\mathcal{J}(R)=T(\mathcal{J}(R))$, where $T(z)=z+1$
and the point at infinity is either periodic or preperiodic.
Then $\mathcal{J}(R)$ is either the whole extended complex plane or a horizontal line.
\label{Boyd2000}
\end{theorem}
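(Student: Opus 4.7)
Let $J=\mathcal{J}(R)$. The first observation is that $J$ must contain $\infty$: translation invariance implies $z_0+n\in J$ for every $z_0\in J$ and $n\in\mathbb{Z}$, so $J$ is unbounded in $\mathbb{C}$, and being closed in $\widehat{\mathbb{C}}$ forces $\infty\in J$. The periodic/preperiodic hypothesis lets me pass to an iterate and, if needed, to a Möbius coordinate change so as to assume $R(\infty)=\infty$, while preserving that $J$ carries an infinite-order Möbius symmetry (the image of $T$) fixing $\infty$. Since $\infty\in J$ and is fixed, its local degree must be $1$ — otherwise $\infty$ would be superattracting and therefore in the Fatou set — so $R(z)=cz+a+b/z+\cdots$ near $\infty$ with the multiplier at $\infty$ equal to $1/c$ of modulus $\geq 1$.

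The crucial step is to observe that $R$ cannot commute with $T$. Indeed, if $R(z+1)=R(z)+1$, then $R(z)-z$ is a rational function of period $1$, hence a constant, which forces $\deg R=1$ and contradicts the hypothesis. The same argument applied to $T^n$ in place of $T$ shows that $T^n\circ R\circ T^{-n}=R$ for some non-zero integer $n$ would also force $R(z)-z$ constant. Consequently the family $\{T^n\circ R\circ T^{-n}:n\in\mathbb{Z}\}$ consists of pairwise distinct rational maps, all of which share the Julia set $J=T^n(J)$.

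From this abundance of rational maps sharing a Julia set, I would invoke the structural theory reviewed earlier — the survey results in Section~\ref{Results on poly} and the extensions to the rational setting due to Levin, Przytycki, Ye and Ferreira — to conclude that $J$ must be \emph{exceptional}: either $J=\widehat{\mathbb{C}}$ or $J$ is contained in a generalized circle of $\widehat{\mathbb{C}}$. A generalized circle invariant under the parabolic Möbius element $T$, which has its unique fixed point at $\infty$ and no finite fixed point, cannot be a bounded circle, arc, or finite segment; it must be a straight line through $\infty$, and of these exactly the horizontal lines are $T$-invariant. Hence either $J=\widehat{\mathbb{C}}$ or $J$ is a horizontal line, as claimed.

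\textbf{Main obstacle.} The delicate point is the structural step: inferring from the existence of an infinite family of distinct rational maps sharing $J$ that $J$ lies in the exceptional list. Boyd's original argument bypasses this general machinery by a direct local analysis at $\infty$: in linearizing coordinates at the repelling fixed point $\infty$, or in Fatou coordinates in the parabolic case, the translation $T$ acts in a controlled way on a fundamental neighborhood of $\infty$, and combining this action with the density of iterated preimages of any non-exceptional point in $J$ either spreads $J$ over a whole neighborhood of $\infty$ (forcing $J=\widehat{\mathbb{C}}$) or confines $J$ to a single $T$-invariant horizontal track. Carrying out this local analysis cleanly, and uniformly treating both the repelling and parabolic subcases at $\infty$, together with reducing the strictly preperiodic case to the periodic one without losing the translation symmetry, is the technical heart of the proof.
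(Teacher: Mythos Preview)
The paper does not contain a proof of this theorem; it is quoted verbatim from Boyd's article (reference \cite{Boyd2000}) as background, and the only follow-up is the remark that Levin generalized it in 2001. There is therefore no ``paper's own proof'' to compare your proposal against.

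On the proposal itself, two points deserve mention. First, your reduction of the strictly preperiodic case is not sound as written: conjugating by a M\"obius map $\phi$ that sends the eventual periodic point to $\infty$ transports $T$ to $\phi\circ T\circ\phi^{-1}$, which fixes $\phi(\infty)$, not the new $\infty$; you thus lose the translation structure and with it any claim to horizontality of the resulting line. Second, the ``structural step'' you invoke --- that infinitely many distinct rational maps sharing a Julia set forces the Julia set to be exceptional --- is not supplied by any result stated in this paper; the Levin--Przytycki and Ferreira references concern different questions, and the theorem of Levin immediately following Theorem~\ref{Boyd2000} in the text is a \emph{generalization} of Boyd's result, so appealing to it here would be circular. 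Your final paragraph correctly identifies that Boyd's own argument proceeds by direct local analysis at $\infty$ rather than via such structural machinery; that is indeed where the work lies.
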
 
The translation by $1$ in the above theorem is not any loss of generality. It is not known, at least to the present authors whether there is a non-exceptional rational map whose Julia set is not a line but is invariant under some translation.  
\par
 The fact that a translation fixes $\infty$ and $\infty$ is a periodic or pre-periodic point of $R$ are important in the above theorem. Levin generalized this result in 2001 (see \cite{Levin_LtoE2001}).
\begin{theorem}
Let $R$ be a rational map and $\phi$ be a  M\"{o}bius map  such that $\phi(\mathcal{J}(R))= \mathcal{J}(R)$. If a fixed point of $\phi$ is a periodic or pre-periodic point of $R$ and $R$ is not exceptional then $\phi$ is of finite order.
\end{theorem}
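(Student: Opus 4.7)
The plan is to argue the contrapositive: assume $\phi$ has infinite order, and conclude $R$ must be exceptional. By Lemma~\ref{Prop2}, simultaneous M\"obius conjugation preserves the invariance $\phi(\mathcal{J}(R))=\mathcal{J}(R)$ and the (pre-)periodicity of any specified fixed point, so I normalize so that the (pre-)periodic fixed point of $\phi$ is $\infty$. An infinite-order M\"obius fixing $\infty$ has one of two normal forms: (a) parabolic $\phi(z)=z+b$ with $b\neq 0$; or (b) $\phi(z)=az+b$ with $a\neq 1$ not a root of unity, reducing after a translation to $\phi(z)=az$ with the second fixed point at $0$. Within (b), I distinguish the irrational elliptic sub-case $|a|=1$ from the loxodromic sub-case $|a|\neq 1$, taking $|a|>1$ in the latter by possibly replacing $\phi$ with $\phi^{-1}$.

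Case (a) is exactly Theorem~\ref{Boyd2000}: $\mathcal{J}(R)\in\{\widehat{\mathbb{C}},\text{horizontal line}\}$, so $R$ is exceptional. In the elliptic sub-case of (b), density of $\{a^n\}$ on the unit circle forces $\mathcal{J}(R)$ to be rotationally symmetric about $0$. Either $\mathcal{J}(R)=\widehat{\mathbb{C}}$ (exceptional) or $\mathcal{J}(R)$ is a closed union of circles centred at $0$; in the latter case, since $R$ sends each circle (a connected subset of $\mathcal{J}(R)$) into a single connected component (itself a circle), $|R(e^{i\theta}z_0)|$ is constant in $\theta$ for each such $z_0$. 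A Fourier comparison of the Laurent series $R(z)=\sum c_k z^k$, exploiting that the set of radii present in $\mathcal{J}(R)$ is infinite, forces the off-diagonal cross-products $c_k\overline{c_j}$ ($k\neq j$) to vanish: $R(z)=cz^d$, a monomial whose Julia set is a circle---still exceptional.

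The loxodromic sub-case is the main obstacle. The $\phi$-orbit $\{a^n z_0\}$ of any $z_0\in\mathcal{J}(R)\setminus\{0\}$ accumulates at $\infty$, so $\infty\in\mathcal{J}(R)$. I now invoke the (pre-)periodicity of $\infty$ under $R$. If $\infty$ is periodic of period $N$, then $R^N$ fixes $\infty$ and is a polynomial, whose Julia set is bounded by Lemma~\ref{bded}; this contradicts $\infty\in\mathcal{J}(R^N)=\mathcal{J}(R)$. If $\infty$ is only strictly pre-periodic, I consider the family $R_n:=\phi^n\circ R\circ\phi^{-n}$ of rational maps of degree $\deg R$, each with $\mathcal{J}(R_n)=\mathcal{J}(R)$ by Lemma~\ref{Prop2}. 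The rigidity theorem for non-exceptional rational maps sharing a common Julia set (see~\cite{Levin1991,LP1997}) makes $\{R_n\}$ a finite set, so $R_n=R_m$ for some $n>m$, and hence $R$ commutes with $\phi^{n-m}(z)=a^{n-m}z$. Comparing Laurent coefficients in $R(a^{n-m}z)=a^{n-m}R(z)$ yields $c_k(a^{(n-m)(k-1)}-1)=0$ for every $k$; since $a$ is not a root of unity, only $k=1$ survives, so $\deg R=1$, contradicting $\deg R\geq 2$.

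The principal difficulty lies in the strictly pre-periodic loxodromic sub-case, where I appeal to the rigidity theorem of~\cite{Levin1991,LP1997}. A self-contained proof would need either to transplant Boyd's parabolic normal-family argument (used in Theorem~\ref{Boyd2000}) to the loxodromic normal form, or to extract a subsequential limit of the family $R_n$ in the compactified variety of degree-$d$ rational maps and show that any such limit is a polynomial, thereby reducing directly to the polynomial-boundedness contradiction with $\infty\in\mathcal{J}(R)$.
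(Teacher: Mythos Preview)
The paper does not prove this statement; it is quoted from Levin's note \cite{Levin_LtoE2001}, so there is no in-paper argument to compare against and I assess your proposal on its own.

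There is one genuine error. In the loxodromic sub-case with $\infty$ periodic of period $N$ you assert that ``$R^N$ fixes $\infty$ and is a polynomial.'' Fixing $\infty$ does not make a rational map a polynomial (e.g.\ $R(z)=z+1/z$ fixes $\infty$ parabolically, $\infty\in\mathcal{J}(R)$, and $R$ is not a polynomial), so Lemma~\ref{bded} does not apply and your contradiction collapses. The repair is already in your hands: your rigidity argument for the strictly pre-periodic sub-case never actually uses pre-periodicity of $\infty$---only that every conjugate $\phi^{n}R\phi^{-n}$ shares $\mathcal{J}(R)$---so run it uniformly across the whole loxodromic case and delete the faulty polynomial step. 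Once you reach $R(a^{k}z)=a^{k}R(z)$ with $|a^{k}|\neq 1$, it is cleaner to argue globally than via one Laurent series: any zero or pole of $R$ off $\{0,\infty\}$ would generate an infinite $a^{k}$-orbit of zeros or poles, forcing $R(z)=cz$. Two smaller points also deserve tightening. In the elliptic sub-case, the claim ``each circle maps into a circle'' needs the observation that $S=\{\,|z|:z\in\mathcal{J}(R)\,\}$ is totally disconnected (otherwise $\mathcal{J}(R)$ contains an open annulus and hence equals $\widehat{\mathbb{C}}$), and the Laurent/Fourier computation must be carried out in a pole-free annulus that meets infinitely many radii of $S$. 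Finally, your appeal to \cite{Levin1991,LP1997} should be pinned down precisely as the statement that, for a non-exceptional Julia set, only finitely many rational maps of a fixed degree can share it.
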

The above theorem ensures the existence of rotational symmetries for non-exceptional rational maps satisfying some conditions.

Ferreira \cite{Ferreira2019} considers all chordal isometries $z\mapsto \frac{\alpha z - \bar{\beta}}{\beta z+\bar{\alpha}}$, $|\alpha|^2+|\beta|^2=1$, that preserve rational Julia sets. Though this seems to be a natural way to generalize, two significant classes of maps, namely rotations about any non-zero point and translations, being non chordal isometries are left out of the consideration. However, all rotations about the origin are chordal isometries and some results obtained by Ferreira remains useful in our consideration of rotational symmetries. Recall that $\mathcal{I}(R)=\{s(z)=\frac{az-\bar{b}}{bz+\bar{a}}: |a|^2+|b|^2=1 \text{ and } s(\mathcal{J}(R))=\mathcal{J}(R)\}$.

%
\begin{lemma}(\cite{Ferreira2019})\label{F_prop1}
	Let $R$ be non-exceptional. For a natural number $n$ and a chordal isometry $\sigma$, if $R \circ \sigma=\sigma^n \circ R$ then $\sigma\in \mathcal{I}(R).$
\end{lemma}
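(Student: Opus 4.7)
The plan is to show that $\sigma$ preserves the Fatou set $\mathcal{F}(R)$, and hence the Julia set. First I would establish by induction on $k$ that $R^k \circ \sigma = \sigma^{n^k} \circ R^k$ for every $k \geq 1$. The base case is the hypothesis, and the inductive step uses the auxiliary identity $R \circ \sigma^j = \sigma^{jn} \circ R$ for all $j \in \mathbb{N}$, which itself drops out of iterating $R \circ \sigma = \sigma^n \circ R$ through $\sigma^j$.

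Next I would prove the forward inclusion $\sigma(\mathcal{F}(R)) \subseteq \mathcal{F}(R)$. Fix $z \in \mathcal{F}(R)$, pick a neighborhood $U$ of $z$ on which $\{R^k\}_{k \geq 1}$ is normal in the chordal metric, and set $V = \sigma(U)$. The identity from the previous step gives, on $V$, the functional relation $R^k = \sigma^{n^k} \circ R^k \circ \sigma^{-1}$. Given any sequence $k_j$, the normality of $\{R^k\}$ on $U$ together with continuity of $\sigma^{-1}|_V : V \to U$ lets me pass to a subsequence along which $R^{k_j} \circ \sigma^{-1}|_V$ converges locally uniformly on $V$ to some limit $g$. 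Crucially, the group of chordal isometries of $\widehat{\mathbb{C}}$ is compact, so I can extract a further sub-subsequence along which $\sigma^{n^{k_j}}$ converges uniformly on $\widehat{\mathbb{C}}$ to some chordal isometry $\tau$. Composing, the corresponding sub-subsequence of $R^{k_j}|_V = \sigma^{n^{k_j}} \circ R^{k_j} \circ \sigma^{-1}|_V$ converges locally uniformly on $V$ to $\tau \circ g$. Hence $\{R^k\}$ is normal on $V$ and $\sigma(z) \in \mathcal{F}(R)$.

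For the reverse inclusion, I would rearrange the hypothesis as $R \circ \sigma^{-1} = \sigma^{-n} \circ R = (\sigma^{-1})^n \circ R$; since $\sigma^{-1}$ is again a chordal isometry satisfying an identical relation with $R$, applying the previous step with $\sigma^{-1}$ in place of $\sigma$ yields $\sigma^{-1}(\mathcal{F}(R)) \subseteq \mathcal{F}(R)$, equivalently $\mathcal{F}(R) \subseteq \sigma(\mathcal{F}(R))$. Combining, $\sigma$ preserves $\mathcal{F}(R)$ and therefore $\mathcal{J}(R)$, so $\sigma \in \mathcal{I}(R)$.

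The main obstacle is the normality argument above: one has to coordinate two distinct compactness arguments --- the normality of $\{R^k\}$ on $U$ and the compactness of the group of chordal isometries in which every $\sigma^{n^k}$ lives --- so that mutually compatible convergent sub-subsequences can be extracted. This is exactly where the hypothesis that $\sigma$ is a chordal isometry (rather than an arbitrary M\"obius map) is essential: without it, the powers $\sigma^{n^k}$ could escape every compact subset of the M\"obius group and normality of the composed family would fail. The non-exceptional assumption on $R$ is not actively used by this argument (the conclusion is automatic when $\mathcal{J}(R) = \widehat{\mathbb{C}}$) and appears to be a convention inherited from the surrounding framework.
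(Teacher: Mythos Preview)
The paper does not supply its own proof of this lemma: it is quoted verbatim as a result from Ferreira~\cite{Ferreira2019} and used as a black box. Consequently there is no in-paper argument to compare against.

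Your proof is correct and self-contained. The inductive identity $R^k\circ\sigma=\sigma^{n^k}\circ R^k$ is derived cleanly, and the normality step is handled properly: you correctly isolate the decisive fact that the group of orientation-preserving chordal isometries (which is $\mathrm{PSU}(2)\cong\mathrm{SO}(3)$) is compact, so any sequence of powers $\sigma^{n^k}$ admits a uniformly convergent subsequence. Composing this with the locally uniform convergence of $R^{k_j}\circ\sigma^{-1}$ on $V$ gives normality of $\{R^k\}$ on $V$, and the symmetry argument via $\sigma^{-1}$ closes the equality. Your remark that the non-exceptional hypothesis is not actually used by the argument is also accurate; the conclusion $\sigma(\mathcal{J}(R))=\mathcal{J}(R)$ holds for arbitrary $R$ under the stated functional relation, and the hypothesis appears only because of the framework in the cited reference.
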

Since every rotation about the origin is a chordal isometry as well as an Euclidean isometry, we have the following useful remark.
\begin{Remark}\label{Rot_nes}
 If $\sigma$ is a rotation about the origin and $R \circ \sigma=\sigma^n \circ R$ for some natural number $n$ then $\sigma \in \Sigma R$. 
\end{Remark}

The next result offers a necessary condition for a chordal isometry to be in $\mathcal{I}(R)$. 
\begin{lemma}(\cite{Ferreira2019})\label{F_prop2}
	Let $R$ be a non-exceptional rational map without any parabolic or rotation domain. If   $\sigma\in \mathcal{I}(R)$   fixes a superattracting fixed point $z_0$ of $R$ then $R \circ \sigma=\sigma^m \circ R$ where $m\geq 2$ is the local degree of $R$ at $z_0.$
\end{lemma}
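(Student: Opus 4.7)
The plan is to show the identity $R\circ\sigma=\sigma^m\circ R$ first in a neighborhood of $z_0$ using B\"{o}ttcher coordinates, and then to extend it to $\widehat{\mathbb{C}}$ by the identity theorem. Since $z_0$ is a super-attracting fixed point of local degree $m$, there is a conformal germ $\phi$ near $z_0$ satisfying $\phi(z_0)=0$, $\phi'(z_0)\neq 0$, and the B\"{o}ttcher equation $\phi(R(z))=\phi(z)^m$. Two preliminary remarks: first, since $\sigma$ permutes Fatou components of $R$ and fixes $z_0$, it preserves the immediate basin $U$ of $z_0$, hence $\sigma(U)=U$ and $\sigma(\partial U)=\partial U$; second, $\sigma$ is a chordal isometry fixing $z_0$, so its Euclidean derivative $\lambda:=\sigma'(z_0)$ has modulus $1$ (the spherical conformal factor is unchanged at a fixed point).

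The heart of the argument is to prove that $\sigma$ acts as the linear rotation $w\mapsto\lambda w$ in B\"{o}ttcher coordinates, i.e., $\phi(\sigma(z))=\lambda\,\phi(z)$ in a neighborhood of $z_0$. For this I consider the function $G(z)=-\log|\phi(z)|$, which extends from a neighborhood of $z_0$ to all of $U$ via the relation $G(R(z))=mG(z)$ and coincides with the Green's function of $U$ with pole at $z_0$ (positive harmonic on $U\setminus\{z_0\}$, vanishing on $\partial U$, with logarithmic pole at $z_0$). The composition $G\circ\sigma$ is also harmonic on $U\setminus\{z_0\}$, vanishes on $\partial U$, and carries the same logarithmic pole at $z_0$ because $|\sigma'(z_0)|=1$; uniqueness of Green's function then forces $G=G\circ\sigma$, i.e., $|\phi(\sigma(z))|=|\phi(z)|$. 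The quotient $h(z):=\phi(\sigma(z))/\phi(z)$ is holomorphic and non-vanishing near $z_0$ (the simple zeros at $z_0$ cancel), has constant modulus $1$, and hence equals the constant $h(z_0)=\lambda$; so $\phi\circ\sigma=\lambda\phi$. Iterating gives $\phi\circ\sigma^k=\lambda^k\phi$, and the B\"{o}ttcher equation then yields
\begin{equation*}
\phi(R(\sigma(z)))=\phi(\sigma(z))^m=\lambda^m\phi(z)^m=\lambda^m\phi(R(z))=\phi(\sigma^m(R(z))).
\end{equation*}
Injectivity of $\phi$ near $z_0$ gives $R(\sigma(z))=\sigma^m(R(z))$ locally, and the identity theorem for rational functions extends the equality to all of $\widehat{\mathbb{C}}$.

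The main obstacle I expect is a rigorous justification of the uniqueness of the Green's function on $U$ invoked in the second paragraph. This requires $\partial U$ to be non-polar, which follows from $R$ being non-exceptional of degree at least $2$ (so that $\mathcal{J}(R)$ is an uncountable perfect set of positive logarithmic capacity); the hypothesis that $R$ has no parabolic or rotation domain presumably enters to guarantee that $U$ behaves as a regular domain for the Dirichlet problem, so that the bounded harmonic function $G\circ\sigma-G$ on $U$ with vanishing boundary values must be identically zero.
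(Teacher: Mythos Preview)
The paper does not prove this lemma; it is quoted from \cite{Ferreira2019} without argument, so there is no in-paper proof to compare against and your proposal must stand on its own.

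Your B\"{o}ttcher/Green's-function route is correct. The one step you flag as an obstacle is more easily handled than you suggest: rather than arguing via boundary values and Dirichlet regularity, use the conformal invariance of the Green's function directly. Since $\sigma$ restricts to a conformal automorphism of the immediate basin $U$ fixing $z_0$, one has $g_U(\sigma(z),\sigma(z_0))=g_U(z,z_0)$, i.e.\ $G\circ\sigma=G$. That the dynamically defined $G$ agrees with the potential-theoretic Green's function of $U$ is standard (Milnor, Carleson--Gamelin), and requires only that $U$ be Greenian, which holds because $\partial U\subset\mathcal{J}(R)$ is an uncountable perfect set for non-exceptional $R$. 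From $|\phi\circ\sigma|=|\phi|$ the remainder of your argument (modulus-one quotient, local functional equation, identity theorem) is routine and correct.

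Two remarks on your final paragraph. First, your guess that the ``no parabolic or rotation domain'' hypothesis is what secures the Green's-function uniqueness is off the mark: your argument nowhere uses it, so you have in fact proved a slightly sharper statement than the lemma as stated. In Ferreira's development that hypothesis presumably enters because the proof passes through rigidity results for rational maps sharing a Julia set or a measure of maximal entropy (Levin--Przytycki, Ye), where it is genuinely needed; your local argument bypasses that machinery. Second, $|\sigma'(z_0)|=1$ does not actually require the chordal-isometry hypothesis: any M\"{o}bius map preserving $\mathcal{J}(R)$ and fixing $z_0$ is a conformal automorphism of the hyperbolic domain $U$ fixing $z_0$, and Schwarz--Pick applied to $\sigma$ and $\sigma^{-1}$ already forces $|\sigma'(z_0)|=1$.
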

Note that, if the point at infinity is not a point in the Julia set of $R$, then $\mathcal{J}(R)$ is not translation invariant. As the composition of rotations about two different points is a translation, $\Sigma R$ contains rotations about a single point whenever $\mathcal{J}(R)$ is not invariant under translation. The following result provides a class of non-exceptional rational maps $R$ such that $\Sigma R$ contains rotations about the origin.
\begin{theorem}\label{Form1}
	Let $P$ and $Q$ be two non-monomial and centered polynomials without any common factor except possibly $0$ such that   $R(z)=\frac{P(z)}{Q(z)}$ is non-exceptional rational map without any parabolic domain or a rotation domain, and its Julia set is not invariant under any (non-trivial) translation. Further, let  $P(z)=a_1 z^{\alpha_1}P_0(z^{\beta_1})$ and $Q(z)=a_2 z^{\alpha_2}Q_0(z^{\beta_2})$, where $\alpha_i,\beta_i$ are maximal for the respective expressions (refer Equation (\ref{norm})) and $a_1, a_2 \in \mathbb{C} \setminus \{0\}$ are the leading coefficients of $P$ and $Q$ respectively.  If $\alpha_1 >\alpha_2+1$  and $\beta= \gcd(\beta_1, \beta_2)>1$ then  $\Sigma R=\{z\mapsto \lambda z: \lambda^\beta=1\}$. 
\end{theorem}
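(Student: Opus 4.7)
I would establish the two inclusions separately, using Remark~\ref{Rot_nes} for one direction and Lemma~\ref{F_prop2} together with a polynomial divisibility argument for the other. For $\{z\mapsto \lambda z:\lambda^{\beta}=1\}\subseteq\Sigma R$: since $\beta$ divides both $\beta_1$ and $\beta_2$, whenever $\lambda^{\beta}=1$ we have $\lambda^{\beta_1}=\lambda^{\beta_2}=1$, and direct substitution gives
\[
R(\lambda z)=\frac{a_1\lambda^{\alpha_1}z^{\alpha_1}P_0(\lambda^{\beta_1}z^{\beta_1})}{a_2\lambda^{\alpha_2}z^{\alpha_2}Q_0(\lambda^{\beta_2}z^{\beta_2})}=\lambda^{\alpha_1-\alpha_2}R(z).
\]
Thus $R\circ\sigma=\sigma^{\alpha_1-\alpha_2}\circ R$ where $\sigma(z)=\lambda z$; since $\alpha_1-\alpha_2\geq 2$ is a natural number, Remark~\ref{Rot_nes} gives $\sigma\in\Sigma R$.

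For the opposite inclusion, fix $\sigma\in\Sigma R$. Because $\mathcal{J}(R)$ is not translation-invariant, $\sigma$ is a rotation, and the argument of Lemma~\ref{Rotation} (two rotations about distinct centres compose to a translation) forces every element of $\Sigma R$ to rotate about a single centre. The first inclusion already places non-identity rotations about $0$ in $\Sigma R$ (using $\beta\geq 2$), so that centre must be $0$ and $\sigma(z)=\lambda z$ for some unimodular $\lambda$. The hypothesis $\alpha_1>\alpha_2+1$ makes $0$ a superattracting fixed point of $R$ of local degree $m=\alpha_1-\alpha_2\geq 2$; since $\sigma$ is a chordal isometry fixing $0$, Lemma~\ref{F_prop2} applies and yields $R(\lambda z)=\lambda^{m}R(z)$. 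Substituting the explicit form of $R$ and cancelling $\lambda^{\alpha_1-\alpha_2}z^{\alpha_1-\alpha_2}$ converts this identity into
\[
P_0(\lambda^{\beta_1}z^{\beta_1})\,Q_0(z^{\beta_2})=P_0(z^{\beta_1})\,Q_0(\lambda^{\beta_2}z^{\beta_2}).
\]

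Setting $U(z)=P_0(z^{\beta_1})$ and $V(z)=Q_0(z^{\beta_2})$, the polynomials $U$ and $V$ are coprime: any common factor must divide both $P$ and $Q$ and hence be a power of $z$, contradicting $P_0(0),Q_0(0)\neq 0$. Because $\gcd(U(\lambda z),V(\lambda z))=1$, the identity $U(\lambda z)V(z)=U(z)V(\lambda z)$ forces $U(\lambda z)\mid U(z)$; matching degrees together with the nonzero constant terms gives $U(\lambda z)=U(z)$ as polynomials, i.e.\ $P_0(\lambda^{\beta_1}w)=P_0(w)$ after the substitution $w=z^{\beta_1}$. Comparing coefficients shows $\lambda^{\beta_1 m_j}=1$ for every exponent $m_j$ appearing in $P_0$, and the maximality of $\beta_1$ in the expression for $P$ (equivalently $\gcd_j m_j=1$) yields $\lambda^{\beta_1}=1$. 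The symmetric argument on $V$ gives $\lambda^{\beta_2}=1$, so the order of $\lambda$ divides $\gcd(\beta_1,\beta_2)=\beta$.

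The delicate step is the passage from the rational-function identity to the polynomial equality $U(\lambda z)=U(z)$; this is precisely where the hypothesis that $P$ and $Q$ share no common factor other than possibly $0$ is indispensable, and the maximality of $\beta_1,\beta_2$ is then what lets one extract $\lambda^{\beta_i}=1$ from the invariance of $P_0$ and $Q_0$.
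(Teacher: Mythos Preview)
Your argument is correct and follows the same overall architecture as the paper: the inclusion $\{\lambda z:\lambda^{\beta}=1\}\subseteq\Sigma R$ via Remark~\ref{Rot_nes}, the reduction of every $\sigma\in\Sigma R$ to a rotation about the origin using the no-translation hypothesis and $\beta\geq 2$, and then Lemma~\ref{F_prop2} at the superattracting fixed point $0$ to obtain the functional identity $R(\lambda z)=\lambda^{m}R(z)$.

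The point where you genuinely diverge from the paper is the passage from
\[
P_0(\lambda^{\beta_1}z^{\beta_1})\,Q_0(z^{\beta_2})=P_0(z^{\beta_1})\,Q_0(\lambda^{\beta_2}z^{\beta_2})
\]
to $\lambda^{\beta_1}=\lambda^{\beta_2}=1$. The paper treats this by comparing the zero sets of the two rational functions $R_1=R_2$ and splitting into two cases: either some root $\xi_i$ of $P_0$ satisfies $\xi_i=\xi_i/\mu^{\beta_1}$ (forcing $\mu^{\beta_1}=1$ directly), or else all the roots of $P_0$ are permuted cyclically by multiplication by $\mu^{\beta_1}$, which after a multiplicity argument forces $P_0(z)=(z^{s}-\xi)^{r}$ and, by maximality of $\beta_1$, $s=1$, a contradiction. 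Your route is shorter: you exploit the coprimality of $U=P_0(z^{\beta_1})$ and $V=Q_0(z^{\beta_2})$ (which is exactly where the ``no common factor except possibly $0$'' hypothesis enters) to deduce $U(\lambda z)\mid U(z)$, then equal degrees and the nonzero constant term give $U(\lambda z)=U(z)$ outright. This avoids the case split and the root-structure analysis entirely; the paper's second case is never needed. Both approaches ultimately invoke the maximality of $\beta_i$ via $\gcd(m_1,\dots,m_k)=1$ to conclude $\lambda^{\beta_i}=1$, so the endgame is identical.
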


\begin{proof}
It follows from the assumption that
	\begin{equation}\label{Form R_1}
	R(z)=a z^m\frac{P_0(z^{\beta_1})}{Q_0(z^{\beta_2})},
	\end{equation}
	where $a= \frac{a_1}{a_2}$ and $m=\alpha_1-\alpha_2$. Recall from (\ref{form p_0}) that $P_0 (z^{\beta_1})$ and $Q_0(z^\beta_2)$ are normalized polynomials  with non-zero constant terms. Also, $\beta=\gcd(\beta_1, \beta_2)>1$.
	\par 
	 For $\sigma(z)=\lambda z$ with $\lambda^\beta=1$, $R \circ  \sigma (z)=a\lambda^m z^m\frac{P_0(z^{\beta_1)}}{Q_0(z^{\beta_2})}=\sigma^m \circ R(z).$ This gives that $\sigma\in \Sigma R$ by the Remark \ref{Rot_nes}. Therefore, $\{z\mapsto \lambda z: \lambda^\beta=1\}\subseteq \Sigma R.$
	\par
In order to prove  $  \Sigma R \subseteq \{z\mapsto \lambda z: \lambda^\beta=1\} $, let $P_0(z)=z^{m_1}+a_2z^{m_2}+\dots +a_kz^{m_k}+a_{k+1}$ and $Q_0(z)=z^{n_1}+b_2z^{n_2}+\dots +b_rz^{n_r}+b_{r+1}$ where $a_i\neq 0$ for $i=2,3,\dots ,k+1$,  $\gcd\{m_1,m_2,\dots, m_{k}\}=1$, and $b_i\neq 0$ for $i=2,3,\dots ,r+1$ and $\gcd\{n_1,n_2,\dots, n_{r}\}=1$. In particular, $P_0$ and $Q_0$ have non-zero constant terms. All these follow from the analysis preceeding Equation~(\ref{norm}). This along with the assumption $m \geq 2$ give that $0$ is a superattracting fixed point of $R$ and $\deg(R,0)=m$.
\par  
 If there is a rotation about a non-zero point  in $\Sigma R$ then its composition with $z \mapsto \lambda z, \lambda^\beta =1$ would be a non-trivial translation and it has to be in  $\Sigma R$. But the Julia set of $R$ is not invariant under any translation by the assumption. Therefore, every element of $\Sigma R$ is a rotaion about a point depending only on $R$ and that point must be the origin because $z \mapsto \lambda z \in \Sigma R $ and $ \lambda^\beta =1$. 
 \par  Let $\sigma\in \Sigma R$ and $\sigma(z)=\mu z$ for some $|\mu|=1$.  As  $0$ is a superattracting fixed point of $R$ and $\deg(R,0)=m$, it follows from Lemma \ref{F_prop2}  that $R \circ \sigma =\sigma^m\circ R$. This implies that $$\frac{P_0(\mu^{\beta_1}z^{\beta_1})}{Q_0(\mu^{\beta_2}z^{\beta_2})}=\frac{P_0(z^{\beta_1})}{Q_0(z^{\beta_2})}.$$ Let $R_1(z)=\frac{P_0(\mu^{\beta_1}z^{\beta_1})}{Q_0(\mu^{\beta_2}z^{\beta_2})}$ and $R_2(z)=\frac{P_0(z^{\beta_1})}{Q_0(z^{\beta_2})}.$ Then $R_1$ and $R_2$ share the same sets of roots and poles. 
 
Since $P$ is not a monomial, $P_0$ is non-constant. Let  $\xi_1,\xi_2,\dots, \xi_s$ be the distinct roots of $P_0(z)$.
	Then the roots  of $R_1$ are the solutions of $z^{\beta_1}=\frac{\xi_i}{\mu^{\beta_1}}$, for $i=1,2,\dots,s$ whereas the roots  of $R_2$ are the solutions of $z^{\beta_1}=\xi_i$, for $i=1,2,\dots,s$. If there is an $i$ such that $\xi_i=\frac{\xi_i}{\mu^{\beta_1}},$ then  $\mu^{\beta_1}=1,$ and hence $Q_0(\mu^{\beta_2}z^{\beta_2})=Q_0(z^{\beta_2})$. This gives that 
	\begin{align*}
	& \mu^{n_1\beta_2}z^{n_1\beta_2}+b_2\mu^{n_2\beta_2}z^{n_2\beta_2}+\dots +b_r\mu^{n_r\beta_2}z^{n_r\beta_2}+b_{r+1}\\
	&=z^{n_1\beta_2}+b_2z^{n_2\beta_2}+\dots +b_rz^{n_r\beta_2}+b_{r+1}.
	\end{align*}
	Comparing the coefficients, we get $\mu^{n_i\beta_2}=1$ for all $i=1,2,\dots,r.$ This gives that \\ $\left(\mu^{\beta_2}\right)^{\gcd(n_1,n_2,\dots,n_r)}=1$. As $\gcd(n_1,n_2,\dots, n_r)=1,$ we get $\mu^{\beta_2}=1.$ Therefore, $\mu^{\beta}=1$ where $\beta=\gcd(\beta_1,\beta_2).$
%
	
	Now consider the remaining cases, i.e., when there is no  $i$ for which $\xi_i=\frac{\xi_i}{\mu^{\beta_1}}$. Then $\mu^{\beta_1}\neq 1$.
	\par  After renaming the roots of $R_1$ and $R_2$, if required, we can write 
	\begin{equation}\label{c1}
	\xi_1=\frac{\xi_{2}}{\mu^{\beta_1}},~ \xi_2=\frac{\xi_{3}}{\mu^{\beta_1}},~\dots,~\xi_{s-1}=\frac{\xi_{s}}{\mu^{\beta_1}} ,~\xi_s=\frac{\xi_{1}}{\mu^{\beta_1}}.
	\end{equation}
	These give that all the roots of $P_0$ are of the same modulus and any two nearest such pair of roots differ by an argument of $\frac{2 \pi}{s}$. Let $a_i$ be the multiplicity of $\xi_i$ as a root of $P_0$ for $i=1,2,3,\cdots,s$. Since $$P_0(\mu^{\beta_1} z^{{\beta_1}})=\mu^{\beta_1 (a_1+ a_2+\cdots+ a_s)}(z^{\beta_1} -\frac{\xi_1}{\mu^{\beta_1}})^{a_1}(z^{\beta_1} -\frac{\xi_2}{\mu^{\beta_1}})^{a_2}(z^{\beta_1} -\frac{\xi_3}{\mu^{\beta_1}})^{a_3}\cdots (z^{\beta_1} -\frac{\xi_k}{\mu^{\beta_1}})^{a_s},$$
	$$P_0( z^{{\beta_1}})=(z^{\beta_1} -\xi_1)^{a_1} (z^{\beta_1} -\xi_2)^{a_2}(z^{\beta_1} -\xi_3)^{a_3}\cdots (z^{\beta_1} -\xi_s)^{a_s},$$
	and
	 $P_0 (z^{\beta_1})=P_0(\mu^{\beta_1} z^{{\beta_1}})$, it follows from Equation(\ref{c1}) that $a_1=a_2=\cdots =a_s$.
	In other words, the multiplicity of each $\xi_i$ is the same. Let it be $r$. Since $\xi_i ^s$ is the same for each $i$, let it be denoted by $\xi$. Hence  $P_0(z)=(z^s-\xi)^r$. As $\beta_1$ is maximal for the expression of $P(z)=z^{\alpha_1} P_0(z^{\beta_1})$, the gcd of all the powers of $z$ in the expression of $P_0 (z)$ is $1$. However, the powers of $z$ in the expression $P_0(z)=(z^s-\xi)^r$ have gcd equal to $s$. Therefore $s=1$. It follows from Equation \ref{c1} that $\mu^{s\beta_1}=1$.  This leads to $\mu^{\beta_1}=1$ and this is a contradiction.
\end{proof}
\begin{figure}[h!]
	\begin{subfigure}{.5\textwidth}
		\centering
		\includegraphics[width=0.955\linewidth]{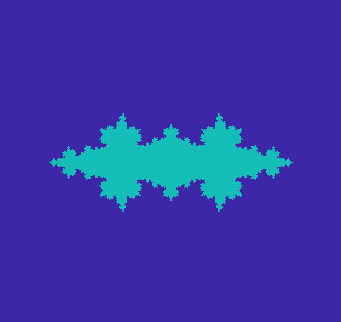}
		\caption{Julia set of $R(z)=\frac{z^2(z^2-2)}{z^2+1}$}
	\end{subfigure}%
	\begin{subfigure}{.5\textwidth}
		\centering
		\includegraphics[width=0.94\linewidth]{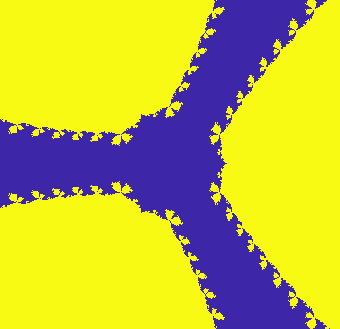}
		\caption{Julia set of $R(z)=\frac{z^3(z^3+1)}{z^6+1}$}
	\end{subfigure}
	
	\caption{Julia set of rational maps of the form $a\frac{P(z)}{Q(z)}$}
	\label{JS-form1}
\end{figure}
 Fig.~\ref{JS-form1} illustrates Theorem~\ref{Form1} for $\frac{z^2(z^2-2)}{z^2+1}$ and $\frac{z^3(z^3+1)}{z^6+1}$.

The assumption in the above theorem that $\mathcal{J}(R)$ is not invariant under any translation can be ensured by putting mild restrictions on the degrees of $P$ and $Q$.
\begin{Remark} If $\deg(P)> \deg(Q)+1$ then $R$ has a superattracting fixed point at $\infty$. On the other hand,  if $\deg(P)< \deg(Q)$, $R(\infty)=0$ and since $0$ is a superattracting fixed point of $R$, $\infty $ is in the Fatou set of $R$. In both the cases, the Julia set is bounded and therefore,  is not invariant under any translation.
\end{Remark}
The assumption in Theorem~\ref{Form1} that both $P$ and $Q$ are not monomials can be relaxed.
\begin{Remark} If both $P$ and $ Q$  are monomials  or only $Q$ but not $P$ is a monomial then $R=\frac{P}{Q}$ is polynomial itself and the issue of  rotational symmetry is already discussed in Section 2.  
	
 If only $P$ but not $Q$ is a monomial then $R=\frac{P}{Q}$ takes the form of $\frac{a z^{m}}{Q_0 (z^{\beta_2})}$ for some $a \neq 0, m>1, \beta_2 \geq 2$ where $Q_0(z^{\beta_2})$ is a normalized polynomial in $z$ with non-zero constant term. Note that $0$ is a superattracting fixed point of $R$ and its Julia set cannot be $\widehat{\mathbb{C}}$. If the Julia set of $R$ is not invariant under any translation then $\Sigma R=\{z \mapsto \lambda z: \lambda^{\beta_2}=1\}$. The proof of  $\{z \mapsto \lambda z: \lambda^{\beta_2}=1\} \subseteq \Sigma R$ is the same as the first part of the proof of Theorem \ref{Form1}. In order to prove that $\Sigma R \subseteq \{z \mapsto \lambda z: \lambda^{\beta_2}=1\} $, first note that every element of $\Sigma R$ is a rotation about the origin (this follows from the arguments used in the proof of Theorem \ref{Form1}). Let $Q_0(z)=z^{n_1}+b_2z^{n_2}+\dots +b_rz^{n_r}+b_{r+1}$ where  $b_i\neq 0$ for $i=2,3,\dots ,r+1$ and $\gcd\{n_1,n_2,\dots, n_{r}\}=1$. If $z \mapsto \mu z \in \Sigma R$ then $R(\mu z)=\mu ^m R(z)$ by Lemma \ref{F_prop2}. Consequently,  $Q_0(\mu^{\beta_2}z^{\beta_2})=Q_0(z^{\beta_2})$. In other words, 
 \begin{align*}
 & \mu^{n_1\beta_2}z^{n_1\beta_2}+b_2\mu^{n_2\beta_2}z^{n_2\beta_2}+\dots +b_r\mu^{n_r\beta_2}z^{n_r\beta_2}+b_{r+1}\\
 &=z^{n_1\beta_2}+b_2z^{n_2\beta_2}+\dots +b_rz^{n_r\beta_2}+b_{r+1}.
 \end{align*}
 Comparing the coefficients, we get $\mu^{n_i\beta_2}=1$ for all $i=1,2,\dots,r.$ This gives that \\ $\left(\mu^{\beta_2}\right)^{\gcd(n_1,n_2,\dots,n_r)}=1$. Since $\gcd(n_1,n_2,\dots, n_r)=1,$  $\mu^{\beta_2}=1.$
 
 \end{Remark}
Using Theorem~\ref{Form1}, a partial generalization of Theorem~\ref{poly-symm} is possible.
\begin{theorem}\label{Form2}
	Let $P$ be a normalized polynomial of degree $d$ and $P(z)=z^\alpha P_0(z^\beta)$ where $\alpha,\beta$ are maximal for this expression (refer Equation (\ref{norm})).  Also, let  $R(z)=az^\nu P(z)$  where $a \in \mathbb{C} \setminus \{0\}$ and $\nu \in \mathbb{Z}$ such that it has no parabolic domain or any rotation domain. If $ \beta \geq 2$  then $\Sigma P = \Sigma R$  for all $\nu$ except the possible values of $ -d$ and $  -d+1$. For $\nu= -d$ or $-d+1$, we have $\Sigma P \subseteq \Sigma R$.
	
\end{theorem}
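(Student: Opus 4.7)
The plan is to establish the two inclusions separately; the forward direction is uniform in $\nu$, while the reverse direction requires a case split.

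\emph{Forward inclusion $\Sigma P \subseteq \Sigma R$ for all $\nu$.}
For $\sigma(z) = \lambda z$ with $\lambda^\beta = 1$, the factorization $R(z) = a z^{\nu+\alpha} P_0(z^\beta)$ and the invariance $P_0(\lambda^\beta z^\beta) = P_0(z^\beta)$ give
$$R(\lambda z) = \lambda^{\nu+\alpha} R(z).$$
Choosing any natural number $n$ with $n \equiv \nu+\alpha \pmod{\beta}$ yields $R \circ \sigma = \sigma^n \circ R$, so Remark~\ref{Rot_nes} puts $\sigma$ in $\Sigma R$. This argument uses nothing about $\nu$, so it covers the excluded values $-d$ and $-d+1$ as well.

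\emph{Reverse inclusion $\Sigma R \subseteq \Sigma P$ when $\nu \notin \{-d,-d+1\}$.}
First I would show every non-identity element of $\Sigma R$ is a rotation about $0$. In each admissible range of $\nu$ the Julia set $\mathcal{J}(R)$ is bounded: when $\nu + d \geq 2$ because $\infty$ is a superattracting fixed point of $R$; when $\nu \leq -d-1$ because $\{0,\infty\}$ is a superattracting $2$-cycle of $R$; and when $R$ is already a polynomial by Lemma~\ref{bded}. Hence no translation can lie in $\Sigma R$, and the forward inclusion produces the non-identity rotation $z \mapsto e^{2\pi i/\beta}z$ about $0$, pinning the common fixed point of all elements of $\Sigma R$ to the origin.

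I would then split the remaining work into three subcases. \emph{Subcase (i):} If $\nu + \alpha \geq 0$, then $R(z) = a z^{\nu+\alpha}P_0(z^\beta)$ is a centered polynomial (the coefficient of $z^{\deg R -1}$ vanishes because $\beta \geq 2$), and the Remark following Theorem~\ref{poly-symm} delivers $\Sigma R = \{z \mapsto \lambda z : \lambda^\beta = 1\}$. \emph{Subcase (ii):} If $\nu + \alpha < 0$ and $\nu + d \geq 2$, conjugate by $\phi(z) = 1/z$: using $P(1/z) = z^{-d}\tilde P_0(z^\beta)$ with $\tilde P_0(w) = 1 + c_2 w^{m_1-m_2} + \cdots + c_{k+1}w^{m_1}$, one computes
$$\tilde R(z) := \phi \circ R \circ \phi^{-1}(z) = \frac{z^{\nu+d}}{a\, \tilde P_0(z^\beta)}.$$
The numerator is a monomial of exponent $\nu + d \geq 2 > 1$, and after rescaling by $c_{k+1}^{-1}$ the denominator becomes a normalized polynomial with non-zero constant term whose gcd of non-zero powers of $z$ equals $\beta$ (because $\gcd(m_1, m_1 - m_2, \ldots, m_1 - m_k) = \gcd(m_1, \ldots, m_k) = 1$). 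This places $\tilde R$ in the setting of the Remark following Theorem~\ref{Form1}, yielding $\Sigma \tilde R = \{z \mapsto \lambda z : \lambda^\beta = 1\}$; pulling back by $\phi$ via Lemma~\ref{Prop2} (noting $\phi$ conjugates $z \mapsto \lambda z$ to $z \mapsto \lambda^{-1}z$, which has the same $\beta$-th-power relation) returns $\Sigma R = \Sigma P$.

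\emph{Subcase (iii) and main obstacle.} If $\nu \leq -d-1$, then $\{0,\infty\}$ is a superattracting $2$-cycle of $R$ with local degrees $p = -\nu - d \geq 1$ and $q = -\nu - \alpha \geq 3$ (using $d - \alpha = \beta m_1 \geq 2$), so $\infty$ is a superattracting fixed point of $R^2$ with local degree $pq \geq 2$. Since $\mathcal{J}(R^2) = \mathcal{J}(R)$ and the absence of parabolic and rotation domains is inherited by $R^2$, Lemma~\ref{F_prop2} applied to $R^2$ at $\infty$ gives $R^2 \circ \sigma = \sigma^{pq} \circ R^2$ for every rotation $\sigma(z) = \mu z \in \Sigma R$. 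Expanding both sides via $R(\mu z) = a\mu^{\nu+\alpha}z^{\nu+\alpha}P_0(\mu^\beta z^\beta)$, substituting through the outer application of $R$, and comparing coefficients of the resulting polynomial-in-$z^\beta$ expression (mirroring the endgame in the proof of Theorem~\ref{Form1}) must force $\mu^\beta = 1$. For the excluded values $\nu \in \{-d,-d+1\}$, the exponent $\nu+d$ in the Subcase~(ii) computation equals $0$ or $1$, violating the hypothesis $m > 1$ of the Remark after Theorem~\ref{Form1}, and $R$ has no superattracting fixed point at $\infty$ on which to invoke Lemma~\ref{F_prop2}; hence only the forward inclusion remains. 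The main difficulty is precisely the bookkeeping in Subcase~(iii): one has to control how $\mu$ propagates through the nested composition $R \circ R$ and through the polynomial-in-$z^\beta$ pattern so as to extract the sharp congruence $\mu^\beta = 1$ rather than a weaker $\mu^{\beta t} = 1$ that a careless comparison would produce.
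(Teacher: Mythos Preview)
Your proposal is correct and follows the same overall case split as the paper, but differs in execution in two places worth noting. In your Subcase~(ii) you conjugate by $z\mapsto 1/z$ to land in the setting of the Remark following Theorem~\ref{Form1}; the paper instead stays with $R$ and applies Lemma~\ref{F_prop2} directly at the superattracting fixed point $\infty$, obtaining $R(\lambda z)=\lambda^{d+\nu}R(z)$, whence $P_0(\lambda^\beta z^\beta)=\lambda^{m_1\beta}P_0(z^\beta)$, and a one-line coefficient comparison (using $\gcd(m_1,\dots,m_k)=1$) yields $\lambda^\beta=1$ with no conjugation needed. More significantly, in your Subcase~(iii) the paper completely sidesteps the ``main obstacle'' you flag: rather than applying Lemma~\ref{F_prop2} to $R^2$ and then tracking $\mu$ through the nested composition by hand, the paper computes $F=R\circ R$ explicitly and shows it has the shape
\[
F(z)=B\,z^{\zeta(\zeta-m_1\beta)}\,\frac{P_1(z^{\beta_1})}{P_2(z^\beta)},\qquad \zeta=-(\nu+\alpha),
\]
with $\zeta(\zeta-m_1\beta)=pq\ge 2$, $\beta\mid\beta_1$, and $P_1,P_2$ coprime normalized polynomials with nonzero constant terms. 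This places $F$ squarely under the hypotheses of Theorem~\ref{Form1}, which then gives $\Sigma F=\{z\mapsto\lambda z:\lambda^\beta=1\}$ as a black box; since $\mathcal{J}(F)=\mathcal{J}(R)$, the conclusion $\Sigma R=\Sigma P$ follows immediately. Your route via direct coefficient comparison inside $R^2(\mu z)=\mu^{pq}R^2(z)$ would ultimately reproduce the same computation that lives inside the proof of Theorem~\ref{Form1}, so the paper's reuse of that theorem is the cleaner way to dispatch the case and eliminates the bookkeeping you anticipate.
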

\begin{proof}
	Since   $\beta \geq 2$, it follows from Theorem~\ref{poly-symm} that  $\Sigma P=\{z\mapsto \lambda z: \lambda^\beta=1\}$ contains a non-identity map. Note that $$R(z)=\frac{a P_0(z^\beta)}{z^{-(\nu+\alpha)}}.$$

For $\nu\geq -\alpha$, $R$  is itself a centered polynomial and  we have   $\Sigma P =\Sigma R$ by the remark following Theorem~\ref{poly-symm}.
	 
Let $\nu<-\alpha$. Choose a positive integer $m$ such that $m-(\alpha+\nu)$ is a positive multiple of $\beta$. Then for every  $\sigma (z)=\lambda z$ with $\lambda^\beta=1$, $R(\sigma(z))=\lambda^m R(z)$. Hence, by Lemma~\ref{F_prop1}, $\Sigma P\subseteq \Sigma R$. Note that if $\mathcal{J}(R)$ is not $\widehat{\mathbb{C}}$ and  is not invariant under any translation then each element of $\Sigma R$ is a rotation about the origin. This is to be used later.
\par 

Now we look into the possibility of $\Sigma R \subseteq \Sigma P $ and hence the equality of  $\Sigma P $ and $ \Sigma R$ when $\nu<-\alpha$. 

Let $-d+2\leq\nu<-\alpha$. 
	
\par  If  $-d+2=-\alpha$ (this happens when $\deg (P_0)=1$ and $\beta =2$) then  $\nu=-\alpha$. It is already found that $\Sigma P\subseteq \Sigma R$ in this case. We proceed with the other situation, i.e.,  $-d+2 <-\alpha$.  Since  $-d+2\leq\nu$,  $\alpha -d+2\leq \alpha +\nu$. Let $\deg(P_0)=m_1$. Then  $m_1 \beta =d -\alpha  \geq 2-(\alpha+\nu)$. This gives that
$\infty$ is a superattracting fixed point of $R$ with local degree $d+\nu$. Then, for any $\sigma\in \Sigma R$, where $\sigma(z)=\lambda z$, $R(\sigma(z))=\sigma^{d+\nu}(R(z))$ by Lemma~\ref{F_prop2}. This gives that $P_0(\lambda^\beta z^\beta)=\lambda^{m_1 \beta } P_0(z^\beta)$. Let $P_0 (z)=z^{m_1}+a_2 z^{m_2}+\cdots +a_{k}z^{m_k}+a_{k+1}$ where each $a_i \neq 0$ and $\gcd(m_1, m_2,\cdots m_k )=1$. Comparing the coefficients except the leading term, we get $\lambda^{m_i \beta}=1$ for all $i=1,2,3,\cdots, k$. Since $\gcd \{m_1,m_2,m_3,\cdots,m_k\}=1$ we have $\lambda^{\beta}=1$. Therefore, $\Sigma R\subseteq \{z\mapsto \lambda z: \lambda^\beta=1\}$. It is important to note here that there is no translation in $\Sigma R$ as $\infty$ is in the Fatou set of $R$.

\par 
Now consider  $\nu<-d$.

 Let $\zeta=-(\nu+\alpha)$. Then $R(z)=a\frac{P_0(z^\beta)}{z^\zeta}$. As $\zeta> d-\alpha=m_1\beta$, $R(\infty)=0$ and $R(0)=\infty$. Now, $\deg(R,0)=\zeta \geq 2$ gives that $\{0,\infty\}$ is a $2$-cycle of superattracting periodic points of $R$.
 For $F(z)=R(R(z))$, each of $ 0$ and $ \infty$ is a superattracting fixed point of $F$.

  Let $\xi_1,\xi_2,\dots, \xi_s$ be the distinct roots of $P_0(z)$, i.e., $P_0(z)=(z-\xi_1)^{a_1}(z-\xi_2)^{a_2}\dots(z-\xi_s)^{a_s}$, where $\sum_{i=1}^{s}a_i=m_1$. 
Then
\begin{align*}
F(z)&=R\left(a\frac{P_0(z^\beta)}{z^\zeta}\right)=a\frac{P_0\left(\left(a\frac{P_0(z^\beta)}{z^\zeta}\right)^\beta\right)}{\left(a\frac{P_0(z^\beta)}{z^\zeta}\right)^\zeta}\\
&=\frac{z^{\zeta^2}P_0\left(\frac{a^\beta(P_0(z^\beta))^\beta}{z^{\beta\zeta}}\right)}{a^{\zeta-1}(P_0(z^\beta))^\zeta}.
\end{align*}
Now,  \begin{align*}
&P_0\left(\frac{a^\beta(P_0(z^\beta))^\beta}{z^{\beta\zeta}}\right)\\&=\left(\frac{a^\beta}{z^{\beta\zeta}}(P_0(z^\beta))^\beta-\xi_1\right)^{a_1}\left(\frac{a^\beta}{z^{\beta\zeta}}(P_0(z^\beta))^\beta-\xi_2\right)^{a_2}\dots \left(\frac{a^\beta}{z^{\beta\zeta}}(P_0(z^\beta))^\beta-\xi_s\right)^{a_s}\\
&=\frac{1}{z^{m_1\beta\zeta}}\left(a^\beta(P_0(z^\beta))^\beta-\xi_1z^{\beta\zeta}\right)^{a_1}\left(a^\beta(P_0(z^\beta))^\beta-\xi_2z^{\beta\zeta}\right)^{a_2}\dots \left(a^\beta(P_0(z^\beta))^\beta-\xi_sz^{\beta\zeta}\right)^{a_s}.
\end{align*}
The numerator of this expression is a polynomial,  each of whose non-constant terms is of the  form $z^{l \beta }$ for some $l>0$. Since $\beta \geq 2$, the difference of two consecutive powers is at least $2$. In particular,  this polynomial is centered. Let $A P_1(z^{\beta_1})$ be this polynomial where $P_1(z^{\beta_1})$ is a normalized polynomial in $z$  and such that $\beta_1$ is maximal for this expression. Clearly $\beta$ divides $\beta_1$. By the same argument,
\begin{align*}
(P_0(z^\beta))^\zeta&=(z^\beta-\xi_1)^{\zeta a_1}(z^\beta-\xi_2)^{\zeta a_2}\dots (z^\beta-\xi_s)^{\zeta a_s}
\end{align*}
is a normalized polynomial. This expression can be written as $P_2(z^\beta)$ for some polynomial $P_2$ where $\beta$ is maximal for this expression as it is so for $P_0(z^\beta)$. Hence, we get $$F(z)=Bz^{\zeta^2-m_1\beta\zeta}\frac{P_1(z^{\beta_1})}{P_2(z^\beta)}.$$
where $B=\frac{A}{a^{\zeta-1}}$.  Note that    $\zeta^2-m_1\beta\zeta=\zeta(\zeta-m_1\beta)>2$ and $\gcd(\beta_1,\beta)=\beta\geq 2$. As $0$ and $\infty$ are superattracting fixed points of $F$, these are in $\mathcal{F}(F)$. Hence $\mathcal{J}(F)$ is not invariant under any translation. Also, there is no common root of $P_1 (z^{\beta_1})$ and $P_2 (z^\beta)$. It follows from  Theorem \ref{Form1} that $\Sigma F= \{z\mapsto \lambda z: \lambda^\beta=1\}$. As $\mathcal{J}(R)=\mathcal{J}(F)$, we conclude that $\Sigma R = \{z\mapsto \lambda z: \lambda^\beta=1\}$.

\end{proof}
\begin{Remark}\label{exc}
	The Julia set of the  rational map $R$ as given in the above theorem  cannot be $\widehat{\mathbb{C}}$ or a line.   For $\nu = -d$ or $-d+1$, it is possible.
	\end{Remark}
	
\begin{Remark}\label{exceptionalrationals}
Let $ \nu=-d$. The Julia set of $R_1(z)=\frac{z^2-2}{z^2}$  is $\widehat{\mathbb{C}}$ and $\Sigma R_1$ consists of all the Euclidean isometries. Here $\nu =-2$, $d=2$ and therefore $\nu=-d$. Writing $R_1(z)$ as $\frac{P(z)}{z^2}$, we see that $\Sigma P=\{\lambda z: \lambda^2 =1\}$ and 
	$\Sigma P \subsetneq \Sigma R_1$. 
	
Consider $\tilde{R}(z)= \frac{z^2 -1}{z^2}$. As per the notations of Theorem~\ref{Form2}, $\nu=-d=-2$ and $P(z)=z^2 -1$. Note that $\deg(\tilde{R})=2$, so $\tilde{R}$ has two critical points, namely $0$ and $\infty$. Also, $\{0, \infty, 1\}$ is a cycle of $3$-periodic points of $\tilde{R}$ containing both the critical points. Therefore this is a superattracting cycle and hence, $0, \infty \in \mathcal{F}(\tilde{R})$. This leads to the conclusion that Julia set of $\tilde{R}$ is not invariant under any translation and $\tilde{R}$ does not have any parabolic or rotation domain. Let $R=\tilde{R} \circ \tilde{R} \circ \tilde{R}$. Then $R(z)=-z^4 \frac{z^4 -4 z^2+2}{(2z^2 -1)^2}$. Since $\mathcal{J}(\tilde{R})=\mathcal{J}( R)$, the latter is not invariant under any translation. Also all other assumptions of Theorem~\ref{Form1} are satisfied and it follows that $\Sigma R=\{z \mapsto \lambda z: \lambda^2=1\}$. Hence $\Sigma \tilde{R}=\{z \mapsto \lambda z: \lambda^2=1\}$ which is nothing but $\Sigma P$. The Julia set of $\tilde{R}$ is given in red in Fig.~(\ref{three-periodic-basin}).
	\end{Remark}
\begin{figure}[h!]
	\centering
	\includegraphics[width=0.4\linewidth]{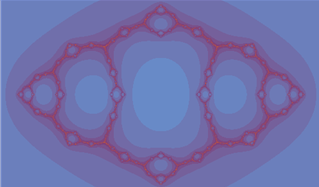}
	\caption{Julia set of $ \frac{z^2-1}{z^2} $}
	\label{three-periodic-basin}
\end{figure}
	 \begin{Remark} To discuss the situation $\nu=-d+1$, consider the Newton map $N_1$ applied to $z^2+1$. Then $N_1(z)=\frac{z^2-1}{2z}$, and $\mathcal{J}(N_1)=\mathbb{R} \cup \{\infty\}$. Thus $\Sigma N_1$ contains translations by every real number and the rotation about the origin by an angle of $\pi$. However, $N_1$ can be written as $N_1(z)=\frac{P(z)}{2z}$ where $P(z)=z^2 -1$. In this case $\nu =1$ and $d=2$, therefore, $\nu=-d+1$. Note that $\Sigma P =\{\lambda z: \lambda^2 =1\}$. Thus $\Sigma P \subsetneq \Sigma N_1$. Here $N_1$ is exceptional.
	 \par
	 Now consider the Newton map $N_2$ applied to the polynomial $z^3-1$. Then $N_2(z)=\frac{2z^3+1}{3z^2}$. The Fatou set $\mathcal{F}(N_2)$ consists of the basins of the superattracting fixed points of $N_2$ corresponding to the roots of $z^3-1=0$, and the Julia set $\mathcal{J}(N_2)$ is connected. Therefore, $N_2$ is non-exceptional rational map without any parabolic domain or a rotation domain, and its Julia set is not invariant under any (non-trivial) translation. Also note that, $\mathcal{F}(N_2)$ contains exactly three unbounded components, namely the immediate basins of the superattracting fixed points of $N_2$. Express $N_2$ as $N_2(z)=\frac{2 P(z)}{3 z^2}$ where $P(z)= z^3 +\frac{1}{2}$, $d=3$ and  $\nu=-2$. As  $\{z\mapsto \lambda z:\lambda^3=1\}\subseteq \Sigma N_2$ (from the first part of the Theorem \ref{Form2}), the elements of $\Sigma N_2$ are rotations about the origin and are of finite order. If there is a rotation $\sigma$ of order bigger than three preserving $\mathcal{J}(N_2)$, then an unbounded component of $\mathcal{F}(N_2)$ will be mapped to a bounded component of $\mathcal{F}(N_2)$ by $\sigma$, which is not possible. Since $\Sigma P =\{z\mapsto \lambda z:\lambda^3=1\}$, $\Sigma P= \Sigma N_2$.
\end{Remark}
\begin{figure}[h!]
	\centering
	\includegraphics[width=0.4\linewidth]{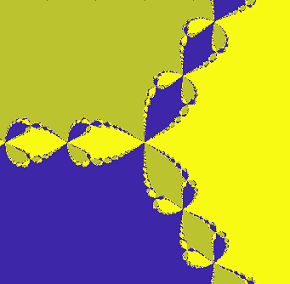}
	\caption{Julia set of $ N_2(z)=\frac{2z^3+1}{3z^2} $}
	\label{Eq_sym}
\end{figure}
We conclude with a corollary, which is already known and can be found in \cite{Ferreira2019}. It is presented here with a simplified proof using Theorem \ref{Form2}.
   \begin{corollary}
   	 For the McMullen map $R_\lambda(z)= z^m+\frac{\lambda}{z^n}$, where $m,n \in \mathbb{N}$ with $m\geq 2$ and $\lambda \in \mathbb{C}\setminus \{0\}$, if $R_\lambda(z)$ has no parabolic or rotation domain then $\Sigma R_\lambda=\{\sigma: \sigma(z)=\mu z, \mu^{m+n}=1\}$.
   \end{corollary}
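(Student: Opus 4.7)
The plan is to apply Theorem~\ref{Form2} directly to $R_\lambda$. I first rewrite
$$R_\lambda(z)=z^m+\frac{\lambda}{z^n}=\frac{z^{m+n}+\lambda}{z^n}=z^{-n}\bigl(z^{m+n}+\lambda\bigr),$$
which puts $R_\lambda$ in the form $a\, z^{\nu}P(z)$ with $a=1$, $\nu=-n\in\mathbb{Z}$, and $P(z)=z^{m+n}+\lambda$. The polynomial $P$ is monic and, because the coefficient of $z^{m+n-1}$ vanishes, centered; hence it is normalized of degree $d=m+n$. Since $P(0)=\lambda\neq 0$, the multiplicity $\alpha$ of $0$ as a root of $P$ is $0$, and the only nonzero power appearing in $P$ is $z^{m+n}$, so the maximal $\beta$ in the decomposition $P(z)=z^{\alpha}P_0(z^{\beta})$ is $\beta=m+n$, with $P_0(w)=w+\lambda$.

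Next I would verify the numerical exclusions of Theorem~\ref{Form2}. Since $\nu=-n$ and $-d=-(m+n)$, the equality $\nu=-d$ would force $m=0$, contradicting $m\geq 2$; similarly $\nu=-d+1$ would force $m=1$, again contradicting $m\geq 2$. Hence $\nu\notin\{-d,-d+1\}$, and moreover $\nu=-n$ satisfies $-d+2\leq \nu<-\alpha=0$ precisely because $m\geq 2$, which places us in the ``generic'' sub-case of the theorem. The hypotheses that $R_\lambda$ has no parabolic and no rotation domain are given. The remaining point, used implicitly inside the proof of Theorem~\ref{Form2}, is that $\mathcal{J}(R_\lambda)$ is not invariant under any translation; this is automatic because $R_\lambda(z)\sim z^m$ as $z\to\infty$ with $m\geq 2$, so $\infty$ is a superattracting fixed point of $R_\lambda$, $\infty\in\mathcal{F}(R_\lambda)$, and $\mathcal{J}(R_\lambda)$ is bounded.

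With every hypothesis in place, Theorem~\ref{Form2} gives $\Sigma R_\lambda=\Sigma P$, and Theorem~\ref{poly-symm} applied to the normalized polynomial $P$ yields $\Sigma P=\{\sigma:\sigma(z)=\mu z,\ \mu^{\beta}=1\}=\{\sigma:\sigma(z)=\mu z,\ \mu^{m+n}=1\}$, which is the required description of $\Sigma R_\lambda$. There is no substantive obstacle beyond the parameter bookkeeping: the one step one could miscount is identifying $\beta=m+n$ (rather than, say, $\gcd(m,n)$ or $m$ alone), but this is forced by the fact that in $P(z)=z^{m+n}+\lambda$ the unique nonzero, non-constant monomial has degree exactly $m+n$.
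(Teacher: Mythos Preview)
Your proof is correct and follows essentially the same approach as the paper: rewrite $R_\lambda(z)=z^{-n}(z^{m+n}+\lambda)$ in the form $a z^\nu P(z)$ with $\nu=-n$ and $P(z)=z^{m+n}+\lambda$, observe that $-(m+n)+2\le\nu<0$ so that the exceptional values $\nu=-d,-d+1$ are avoided, and then invoke Theorem~\ref{Form2} together with Theorem~\ref{poly-symm}. Your write-up is simply more explicit in checking the hypotheses (the identification $\beta=m+n$, the exclusion of $\nu\in\{-d,-d+1\}$, and the boundedness of $\mathcal{J}(R_\lambda)$) than the paper's terse two-line version.
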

\begin{proof}
	Here $R_\lambda(z)= z^m+\frac{\lambda}{z^n}=\frac{z^{m+n}+\lambda}{z^n}=z^\nu P(z)$, where $\nu=-n$ and $P(z)=z^{m+n}+\lambda$, which is a normalized polynomial with $\Sigma P=\{z\mapsto \mu z: \mu^{m+n}=1\}$. Also, note that $-(m+n)+2\leq\nu<0$. By Theorem \ref{Form2}, $\Sigma R_\lambda=\Sigma P.$
\end{proof}

	\section{Newton's and Chebyshev's methods}
A root-finding method applied to a non-constant polynomial $p$ is a rational map, for which every root of $p$ is an attracting (superattracting if the root is simple) fixed point. A root-finding method can be expected to inherit, at least partially, some dynamical aspects of the polynomial. This section deals with the possible relation between the rotational symmetries of the Julia set of a  polynomial and that of some root-finding methods applied to it.

\par  The Newton's method is a classical root-finding method. The Newton's method $N_p$ applied to a polynomial $p$ is defined as $$N_p(z)=z-\frac{p(z)}{p'(z)}.$$
The dynamics of the Newton's method applied to a polynomial has already been determined. It is proved that the Julia set of $N_p$ is connected. It is due to a result of Shishiura (Theorem \cite{Shishikura}) who proved that the Julia set of a rational map of degree at least two is disconnected if the rational map has at least two weakly repelling fixed point (a fixed point which is either repelling or a multiple fixed point). Other than the roots of $p$ (which are indeed attracting fixed points of $N_p$), $\infty$ is only a fixed point of $N_p$ and it is repelling. The following is proved by Yang \cite{Yang2010}.
\begin{theorem}\label{Yang}
	 If $p$ is a normalized polynomial  then $\Sigma p\subseteq \Sigma N_p$.
\end{theorem}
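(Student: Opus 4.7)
The plan is to show that every element $\sigma$ of $\Sigma p$ semi-conjugates $N_p$ to itself, specifically that $\sigma \circ N_p = N_p \circ \sigma$. Once this commutation relation is established, Lemma \ref{conj-JS} applied with $\psi = \sigma$ and $R_1 = R_2 = N_p$ will immediately yield $\sigma(\mathcal{J}(N_p)) = \mathcal{J}(N_p)$, giving $\sigma \in \Sigma N_p$.

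First I would write $p$ in its normal form $p(z) = z^{\alpha} p_0(z^{\beta})$ as in Equation (\ref{norm}). By Theorem \ref{poly-symm}, any $\sigma \in \Sigma p$ has the form $\sigma(z) = \lambda z$ with $\lambda^{\beta} = 1$. A direct substitution using $\lambda^{\beta}=1$ gives
\begin{equation*}
p(\lambda z) = (\lambda z)^{\alpha} p_0((\lambda z)^{\beta}) = \lambda^{\alpha} z^{\alpha} p_0(z^{\beta}) = \lambda^{\alpha} p(z).
\end{equation*}
Differentiating the identity $p(\lambda z) = \lambda^{\alpha} p(z)$ in $z$ yields $\lambda\, p'(\lambda z) = \lambda^{\alpha} p'(z)$, i.e., $p'(\lambda z) = \lambda^{\alpha - 1} p'(z)$.

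Next I would substitute these into the definition of the Newton map and compute
\begin{equation*}
N_p(\sigma(z)) = \lambda z - \frac{p(\lambda z)}{p'(\lambda z)} = \lambda z - \frac{\lambda^{\alpha} p(z)}{\lambda^{\alpha - 1} p'(z)} = \lambda\!\left(z - \frac{p(z)}{p'(z)}\right) = \sigma(N_p(z)).
\end{equation*}
Thus $N_p \circ \sigma = \sigma \circ N_p$, which I can rewrite as $N_p = \sigma^{-1} \circ N_p \circ \sigma$. Invoking Lemma \ref{conj-JS} with this conjugacy, $\mathcal{J}(N_p) = \sigma(\mathcal{J}(N_p))$, so $\sigma \in \Sigma N_p$. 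Since this holds for every $\sigma \in \Sigma p$, we conclude $\Sigma p \subseteq \Sigma N_p$.

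There is essentially no serious obstacle; the argument is a clean computation exploiting the specific algebraic structure $p(z)=z^{\alpha}p_0(z^{\beta})$. The one place to take a little care is the differentiation step that converts the scaling law for $p$ into a scaling law for $p'$, but this is routine. Note also that the commutation relation $N_p \circ \sigma = \sigma \circ N_p$ is of the form $R \circ \sigma = \sigma^n \circ R$ with $n=1$, so one could alternatively appeal to Remark \ref{Rot_nes} once one checks that $N_p$ is non-exceptional (its Julia set contains $\infty$ as a repelling fixed point and is connected, hence not a circle, line segment, or the whole sphere); however, the Lemma \ref{conj-JS} route avoids even that verification and is the more direct path.
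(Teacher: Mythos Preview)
Your proof is correct. The paper does not supply its own proof of Theorem~\ref{Yang} (it is quoted from \cite{Yang2010}), but your argument is exactly the technique the paper uses to prove the analogous result for Chebyshev's method (Theorem~\ref{Sym_Cheb}): derive the scaling relations $p(\lambda z)=\lambda^{\alpha}p(z)$ and $p'(\lambda z)=\lambda^{\alpha-1}p'(z)$ from $\lambda^{\beta}=1$, deduce that the root-finding map commutes with $\sigma$, and conclude via Lemma~\ref{conj-JS}.
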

In the same article (\cite{Yang2010}) it is proved that the Julia set of $N_p$ is a line whenever $p$ has exactly two roots with same multiplicity. In fact, the Julia set is the perpendicular bisector of the line segment joining two roots of $p$. In this case $\Sigma N_p$ contains translation, hence can not be equal to $\Sigma p$. If $\Sigma N_p$ does not contain any translation and $\Sigma p$ is non-trivial, then Theorem \ref{Yang} asserts that $\Sigma N_p$ contains rotations about the origin. In this scenario one can expect equality in $\Sigma p$ and $\Sigma N_p$. We apply certain conditions on $p$ and use Theorem \ref{Form1} to prove the desire equality. As $\mathcal{J}(N_p)$ is connected, $N_p$ does not contain a Herman ring of any period. However, the possibility of existence of a Siegel disk can not be discarded. Note that, a polynomial is called generic if all its roots are simple.
\begin{theorem}\label{New_equality}
	Let $p$ be a normalized, generic polynomial $p$ with a root at the origin and $\Sigma p$ be non-trivial. If $N_p$ does not contain a parabolic domain or a Siegel disk then $\Sigma p=\Sigma N_p$.
\end{theorem}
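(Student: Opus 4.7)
The plan is to realize $N_p$ as a ratio $P/Q$ of polynomials that satisfies the hypotheses of Theorem \ref{Form1}, which then forces $\Sigma N_p=\{z\mapsto \lambda z : \lambda^{\beta}=1\}$. Since Theorem \ref{Yang} already gives $\Sigma p\subseteq \Sigma N_p$, this equality will finish the proof.

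Because $\Sigma p$ is non-trivial, Theorem \ref{poly-symm} lets me write $p(z)=z^\alpha p_0(z^\beta)$ with $\beta\ge 2$; genericity together with $p(0)=0$ forces the multiplicity $\alpha$ to be $1$. Writing $p_0(w)=w^{m_1}+a_2 w^{m_2}+\cdots+a_k w^{m_k}+a_{k+1}$ with $\gcd(m_1,\dots,m_k)=1$ and $a_{k+1}\neq 0$, a direct computation yields
\[
P(z):=zp'(z)-p(z)=\beta\,z^{\beta m_k+1}\,\widetilde{P}(z^\beta),\qquad Q(z):=p'(z)=a_{k+1}+\sum_{i=1}^{k}a_i(1+\beta m_i)z^{\beta m_i},
\]
where $\widetilde{P}$ is a polynomial with $\widetilde{P}(0)\neq 0$. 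From this I read off $\alpha_1=\beta m_k+1\ge 3$, $\alpha_2=0$, and the maximal exponents $\beta_1,\beta_2$ of Theorem \ref{Form1} are each multiples of $\beta$ with $\beta_2=\beta$ (using $\gcd(m_1,\dots,m_k)=1$). Hence $\alpha_1>\alpha_2+1$ and $\gcd(\beta_1,\beta_2)=\beta\ge 2$. Both $P$ and $Q$ are centered because in each the top two nonzero powers differ by at least $\beta\ge 2$, and genericity of $p$ (no common root of $p$ and $p'$) gives that $P$ and $Q$ share no non-zero root.

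Next I verify the structural hypotheses of Theorem \ref{Form1}. The standing hypothesis rules out parabolic domains and Siegel disks, and Herman rings are excluded because $\mathcal{J}(N_p)$ is connected. The map $N_p$ is non-exceptional: $\mathcal{J}(N_p)\neq \widehat{\mathbb{C}}$ since the roots of $p$ are (super)attracting fixed points of $N_p$, and $\mathcal{J}(N_p)$ is neither a bounded circle nor a line segment because $\infty\in\mathcal{J}(N_p)$ is a repelling fixed point. Finally $\mathcal{J}(N_p)$ is not translation invariant: $\beta\ge 2$ forces $\deg p\ge 3$, ruling out Yang's line case; any other translational symmetry becomes $z\mapsto z+1$ after an affine conjugation, and Boyd's Theorem \ref{Boyd2000} applied at the periodic fixed point $\infty$ then forces the Julia set to be $\widehat{\mathbb{C}}$ or a line, both already excluded. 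Theorem \ref{Form1} now yields $\Sigma N_p=\{z\mapsto\lambda z:\lambda^\beta=1\}=\Sigma p$.

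The one edge case is when $p_0$ has exactly two nonzero terms, that is $p(z)=z^{\beta+1}+a_{k+1}z$, in which case $P(z)=\beta z^{\beta+1}$ is a monomial and Theorem \ref{Form1} does not apply verbatim; the Remark immediately after Theorem \ref{Form1}, treating the case where only $P$ is a monomial, delivers the same identity $\Sigma N_p=\{z\mapsto \lambda z:\lambda^\beta=1\}$. I expect the main obstacle to be the non-translation-invariance step, since the general existence of a non-line, non-$\widehat{\mathbb{C}}$ translation-invariant Julia set is open (as noted after Theorem \ref{Boyd2000}); it is precisely the periodicity of $\infty$ under $N_p$ together with Yang's dichotomy that allow me to bypass this gap by invoking Boyd's theorem.
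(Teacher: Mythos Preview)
Your proof is correct and follows essentially the same route as the paper: both express $N_p=\frac{P}{Q}$ with $P=zp'-p$ and $Q=p'$, read off $\alpha_1=m_k\beta+1$, $\alpha_2=0$, $\gcd(\beta_1,\beta_2)=\beta$, verify non-exceptionality and non-translation-invariance via Boyd's theorem at the repelling fixed point $\infty$, and invoke Theorem~\ref{Form1}. Your explicit treatment of the edge case $k=1$ (where $P$ degenerates to the monomial $\beta z^{\beta+1}$ and the Remark following Theorem~\ref{Form1} is required) is actually more careful than the paper's own argument, which tacitly assumes $P$ is non-monomial.
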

\begin{proof}
	Recall from  (\ref{norm}) and (\ref{form p_0}) that   $p(z)=z^\alpha p_0(z^\beta)$, where $\alpha, \beta$ are maximal for this expression and $p_0$ is a monic polynomial. Since $0$ is a simple root of $p$, $\alpha=1$ and as $\Sigma p$ contains more than one element, $\beta\geq 2$. The latter gives that every root of $p_0$ gives rise to two distinct roots of $p$. Therefore, $p$ has at least three distinct roots and hence the Fatou set of $N_p$ contains at least three (in fact, infinitely many) Fatou components. The Julia set cannot be a line.  Since $\infty$ is a repelling fixed point of $N_p$, it follows from Theorem~\ref{Boyd2000} that $\mathcal{J}(N_p)$ is not invariant under any translation. Further, $N_p$ is not exceptional.
	\par  Now, $p$ can be writtes as
	$$p(z)=z\sum_{i=1}^{k}a_iz^{m_i\beta}+a_{k+1}z$$ where  $p_0(z)=z^{m_1}+a_2z^{m_2}+\dots +a_kz^{m_k}+a_{k+1}$,  $a_1=1$ and $a_i \neq 0$ for $i=2,3,\cdots k+1$. Then $p'(z)=\sum_{i=1}^{k}b_iz^{m_i\beta}+a_{k+1}$ where $b_i=a_i(m_i\beta+1)$, and
	\begin{align*}
	N_p(z)&=z-\frac{p(z)}{p'(z)}=z-\frac{z (\sum_{i=1}^{k}a_iz^{m_i\beta})+a_{k+1}z}{(\sum_{i=1}^{k}b_iz^{m_i\beta})+a_{k+1}}\\
	&=\frac{z\left(m_1\beta z^{m_1\beta}+a_2m_2\beta z^{m_2\beta}+\dots+a_km_k\beta z^{m_k\beta}\right)}{(\sum_{i=1}^{k}b_iz^{m_i\beta})+a_{k+1}}.
	\end{align*}

	Let $$P(z)=\frac{1}{m_1 \beta} z\left(m_1\beta z^{m_1\beta}+a_2m_2\beta z^{m_2\beta}+\dots a_{k-1} m_{k-1} \beta z^{m_{k-1}\beta}+a_km_k\beta z^{m_k\beta}\right)$$ and $$Q(z)=\frac{1}{b_1}\big[ (\sum_{i=1}^{k}b_iz^{m_i\beta})+a_{k+1}\big]. $$
	Then $P $ and $Q$ are normalized polynomials without any common root (as $p$ is generic) and     $N_p (z)=\frac{m_1 \beta}{b_1}\frac{P(z)}{Q(z)}$. Let $P_0(z)=z^{m_1-m_k}+\frac{a_2 m_2}{m_1}z^{m_2 -m_k}+\cdots+ \frac{a_{k-1} m_{k-1}}{m_1}z^{m_{k-1} -m_k}+\frac{a_k m_k}{m_1}$. Then  $P(z)=z^{m_k \beta +1} P_0(z^{\beta_1})$ where $m_k \beta +1$ and $\beta_1$ are maximal for this expression. Here $\beta_1=\beta \gcd(m_1 -m_k, m_2 -m_k,\cdots, m_{k-1}-m_k)$ is a multiple of $\beta$. 
	Similarly $Q(z)=Q_0(z^\beta)$ where $\beta$ is maximal for this expression. Now, 
	
	$$
	N_p (z)=\frac{m_1 \beta}{b_1} \frac{z^{m_k \beta +1} P_0 (z^{\beta_1})}{Q_0(z^\beta)}. $$
As $\gcd(\beta_1,\beta)=\beta \geq 2$ and $m_k \beta +1 >1$, it follows from Theorem [\ref{Form1}] that $\Sigma N_p=\{z\mapsto\lambda z:\lambda^\beta=1\}$, which is nothing but $\Sigma p.$
\end{proof}
The polynomial $p(z)=z(z^3-1)$ is normalized, generic and is with a simple root at the origin. Also $\Sigma p=\{z\mapsto \lambda z: \lambda^3=1\}$, non-trivial. Here $N_{p}(z)=\frac{3z^4}{4z^3-1}$. Note that the critical points of $N_{p}$ are the roots of $p$, which are also superattracting fixed points of $N_{p}$. Hence $N_{p}$ can not have any parabolic domain or Siegel disk. Thus, it follows from Theorem~\ref{New_equality} that $\Sigma p=\Sigma N_{p}$ (see Fig. \ref{New_sym}(a)).
\par 
The preceding example can be generalized with the same argument. Consider the polynomial $p(z)=z(z^n-1),~n\geq 2$. Then $N_p(z)=\frac{nz^{n+1}}{(n+1)z^n-1}$. The critical points of $N_p$ are the roots of $p$, and the Fatou set of $N_p$ consists of the basin of attractions corresponding to the roots of $p$. Therefore, $N_p$ does not have any parabolic or rotation domain. Hence by Theorem \ref{New_equality}, $\Sigma p=\Sigma N_p=\{z\mapsto \lambda z: \lambda^n=1\}$.
\begin{figure}[h!]
	\begin{subfigure}{.5\textwidth}
		\centering
		\includegraphics[width=0.98\linewidth]{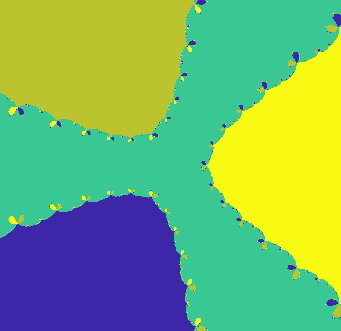}
		\caption{Julia set of $ N_{p}(z)$}
	\end{subfigure}%
	\begin{subfigure}{.5\textwidth}
		\centering
		\includegraphics[width=0.975\linewidth]{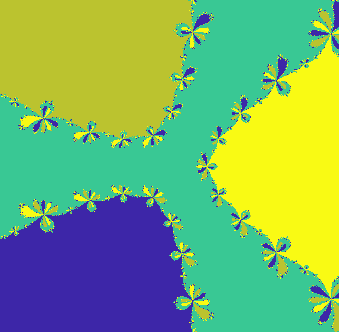}
		\caption{Julia set of $ C_{p}(z)$}
	\end{subfigure}
	\caption{The Newton's method and the Chebyshev's method applied to $p (z)=z(z^3-1)$}
	\label{New_sym}
\end{figure}
 \begin{Remark}
 The K\"{o}nig's methods is a family of root-finding methods $\{K_{p,n}, n=2,3,\dots \}$  defined by $$K_{p,n}(z)=z+(n-1)\frac{\left(\frac{1}{p}\right)^{[n-2]}(z)}{\left(\frac{1}{p}\right)^{[n-1]}(z)},$$ where $\left(\frac{1}{p}\right)^{[i]}$ denotes the $i$-th derivative of $\frac{1}{p}$. For $n=2$, $K_{p,2}$ is the Newton's method.  Some useful properties of $K_{p,n}$ can be found in \cite{BuffHenriksen2003}. Liu and Gao proved the following results in ~\cite{Liu_Gao2015}.
\begin{theorem}
If $p$ is a normalized polynomial then $\Sigma p\subseteq \Sigma K_{p,n}$. 
\label{konig-symmetry}
\end{theorem}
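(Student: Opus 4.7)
The plan is to exploit the homogeneity property that a rotational symmetry imposes on $p$, and then propagate it through the derivatives that appear in the definition of $K_{p,n}$. By Theorem~\ref{poly-symm}, every non-identity element of $\Sigma p$ for a normalized $p$ has the form $\sigma(z)=\lambda z$ with $\lambda^\beta=1$, where $p(z)=z^\alpha p_0(z^\beta)$. From this it follows immediately that
\begin{equation*}
p(\lambda z)=\lambda^\alpha z^\alpha p_0(\lambda^\beta z^\beta)=\lambda^\alpha p(z),
\end{equation*}
which is the single identity that drives the whole argument.

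Next I would set $g(z)=1/p(z)$, so that $g(\lambda z)=\lambda^{-\alpha} g(z)$, and differentiate $k$ times. Using the chain rule on each differentiation one obtains
\begin{equation*}
\lambda^{k}\,g^{[k]}(\lambda z)=\lambda^{-\alpha}\,g^{[k]}(z), \qquad \text{hence}\qquad g^{[k]}(\lambda z)=\lambda^{-\alpha-k}\,g^{[k]}(z),
\end{equation*}
valid for every $k\geq 0$ and wherever $p(z)\neq 0$. Substituting $k=n-2$ and $k=n-1$ into the definition of the König map,
\begin{equation*}
K_{p,n}(\lambda z)=\lambda z+(n-1)\,\frac{g^{[n-2]}(\lambda z)}{g^{[n-1]}(\lambda z)}=\lambda z+(n-1)\,\frac{\lambda^{-\alpha-(n-2)}g^{[n-2]}(z)}{\lambda^{-\alpha-(n-1)}g^{[n-1]}(z)}=\lambda\,K_{p,n}(z),
\end{equation*}
so that $\sigma\circ K_{p,n}=K_{p,n}\circ \sigma$.

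Once this conjugation (in fact, commutation) identity is established, the conclusion is immediate: if $\sigma\circ K_{p,n}=K_{p,n}\circ \sigma$, then $\sigma\circ K_{p,n}^{k}=K_{p,n}^{k}\circ \sigma$ for every $k\geq 1$, so normality of the iterate family at $z_0$ is equivalent to normality at $\sigma(z_0)$. Consequently $\sigma(\mathcal{F}(K_{p,n}))=\mathcal{F}(K_{p,n})$, and hence $\sigma(\mathcal{J}(K_{p,n}))=\mathcal{J}(K_{p,n})$, which places $\sigma$ in $\Sigma K_{p,n}$. As $\sigma$ was an arbitrary element of $\Sigma p$, this yields $\Sigma p\subseteq \Sigma K_{p,n}$. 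I do not anticipate a serious obstacle; the only mildly delicate point is making sure the derivative identity $g^{[k]}(\lambda z)=\lambda^{-\alpha-k} g^{[k]}(z)$ is handled uniformly in $k$ (a one-line induction) and that the ratio in the definition of $K_{p,n}$ is well-defined wherever it appears, which is automatic away from the zeros of $(1/p)^{[n-1]}$.
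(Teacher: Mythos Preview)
Your proof is correct. Note, however, that the paper does not give its own proof of this statement: Theorem~\ref{konig-symmetry} is quoted as a result of Liu and Gao~\cite{Liu_Gao2015}, so there is no in-paper argument to compare against directly.

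That said, your approach is precisely the one the paper uses for the parallel Chebyshev result (Theorem~\ref{Sym_Cheb}): start from the homogeneity $p(\lambda z)=\lambda^{\alpha}p(z)$ for $\lambda^\beta=1$, push it through the relevant derivatives, and deduce the commutation $K_{p,n}\circ\sigma=\sigma\circ K_{p,n}$, which immediately gives $\sigma(\mathcal{J}(K_{p,n}))=\mathcal{J}(K_{p,n})$. The paper also remarks, just after Theorem~\ref{Sym_Cheb}, that the inclusion $\Sigma p\subseteq\Sigma F_p$ holds for any root-finding method satisfying the Scaling Theorem (citing~\cite{Sym-and-dyn}); your computation is exactly a verification of this scaling behaviour for K\"{o}nig's method. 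So while there is no proof in the paper to match, your argument is in the same spirit as everything the paper does around it.
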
 
\begin{theorem}
	The  Julia set $\mathcal{J}(K_{p,n})$ of $K_{p,n}$ is a straight line if and only if $p$ has exactly two distinct roots with the same multiplicity.
\end{theorem}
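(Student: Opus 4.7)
I would start with the sufficiency direction. Since K\"onig's method respects affine conjugation, $K_{p\circ\psi,n}=\psi^{-1}\circ K_{p,n}\circ\psi$ for any affine $\psi$, whether $\mathcal{J}(K_{p,n})$ is a line is an affine-invariant question. Thus, assuming $p$ has two distinct roots of common multiplicity $m$, I can place them at $\pm 1$ and, after rotating, absorb the leading coefficient so that $p(z)=c(z^2-1)^m$ with $c>0$. Now $p$ is an even polynomial with real coefficients, so $1/p$ is even and real; by induction $(1/p)^{(k)}(-z)=(-1)^k(1/p)^{(k)}(z)$ and $\overline{(1/p)^{(k)}(z)}=(1/p)^{(k)}(\bar z)$. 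Substituting into the defining formula
\[
K_{p,n}(z)=z+(n-1)\,\frac{(1/p)^{(n-2)}(z)}{(1/p)^{(n-1)}(z)}
\]
yields both $K_{p,n}(-z)=-K_{p,n}(z)$ and $K_{p,n}(\bar z)=\overline{K_{p,n}(z)}$. Combining these, $K_{p,n}(iy)$ is purely imaginary for every $y\in\mathbb{R}$, so $i\mathbb{R}$ is $K_{p,n}$-invariant. Since $\pm 1$ are (super)attracting fixed points in the two open half-planes, a standard normal family argument shows each open half-plane lies in the Fatou set, forcing $\mathcal{J}(K_{p,n})\subseteq i\mathbb{R}$. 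Equality follows because $\mathcal{J}(K_{p,n})$ is nonempty, fully invariant and must be the common boundary of the two basins.

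For the converse, assume $\mathcal{J}(K_{p,n})=L$ is a straight line. The complement $\widehat{\mathbb{C}}\setminus L$ consists of two open half-planes $H_1,H_2$. Each $H_i$ must be a single Fatou component: if $H_i$ contained two distinct Fatou components, the curve separating them would lie inside $H_i$ and would be part of $\mathcal{J}(K_{p,n})$, contradicting $\mathcal{J}(K_{p,n})=L$. Each $H_i$ is therefore fixed by $K_{p,n}$. Since every root of $p$ is an attracting fixed point of $K_{p,n}$ lying in its own immediate basin, and distinct roots give distinct Fatou components, $p$ has exactly two distinct roots, one in each $H_i$.

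After an affine conjugation, place the roots at $\pm 1$ with $L=i\mathbb{R}$ and write $p(z)=c(z-1)^{m_1}(z+1)^{m_2}$. The goal is $m_1=m_2$. Since $K_{p,n}$ maps $L$ to $L$ and each half-plane to itself, the Schwarz reflection principle forces $K_{p,n}$ to commute with the reflection $\tau(z)=-\bar z$; that is, $K_{p,n}(-\bar z)=-\overline{K_{p,n}(z)}$. Plugging the K\"onig formula into this identity, the derivatives of $1/p$ and $1/\bar p$ must satisfy a matching linear relation which, after induction on $n$ or a direct comparison of leading behavior at the roots, collapses to $p(-z)=\lambda\,\bar p(z)$ for some unimodular $\lambda$; matching the factorizations on the two sides then forces the multiplicity of $-1$ to equal that of $+1$, i.e., $m_1=m_2$. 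The main obstacle I anticipate is exactly this last step: extracting the functional equation for $p$ from the reflection symmetry of $K_{p,n}$ requires carefully tracking the conjugation and parity of iterated derivatives of $1/p$, and doing this cleanly without becoming mired in a Fa\`a di Bruno calculation is the delicate part; a clean route is likely via the recursion $(1/p)^{(k+1)}=\bigl((1/p)^{(k)}\bigr)'$ applied to the symmetric form $p(z)=c(z^2-1)^m\cdot(z+1)^{m_2-m_1}$ and showing the last factor must be trivial.
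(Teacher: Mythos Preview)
The paper does not prove this statement; it is quoted (together with the preceding theorem) as a result of Liu and Gao, so there is no in-paper argument to compare against. On its own merits, your proposal is a reasonable outline but has two genuine gaps in the necessity direction.

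First, after reducing to two roots and placing them at $\pm 1$, you assert $L=i\mathbb{R}$. This is not automatic: an affine map sending the roots to $\pm 1$ sends $L$ to \emph{some} line separating $\pm 1$, not necessarily the imaginary axis. The correct order is to apply Schwarz reflection first. Since $K_{p,n}$ preserves $L$ and each half-plane, it commutes with the anti-holomorphic reflection $\tau_L$ in $L$; hence $\tau_L$ permutes the attracting fixed points, and as neither lies on $L=\mathcal{J}(K_{p,n})$ they are swapped. Thus the two roots are mirror images in $L$, and \emph{now} one may normalise so that $L=i\mathbb{R}$ and the roots are $\pm 1$.

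Second, the step you yourself flag --- passing from $K_{p,n}(-\bar z)=-\overline{K_{p,n}(z)}$ to a relation $p(-z)=\lambda\,\bar p(z)$ --- is not carried out, and the route you sketch through iterated derivatives of $1/p$ is vague. A direct and complete substitute is to compare multipliers: the anti-holomorphic self-conjugacy $\tau$ takes the fixed point $1$ to $-1$, so $K_{p,n}'(-1)=\overline{K_{p,n}'(1)}$. For K\"onig's method the multiplier at a root of multiplicity $m$ is a \emph{real} number depending only on $m$ and $n$ (for Newton it is $(m-1)/m$; the general formula is in Buff--Henriksen), and this dependence is injective in $m$. Equality of the two multipliers therefore forces $m_1=m_2$.

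Your sufficiency direction is essentially correct; to make it airtight, note that the symmetry $K_{p,n}\circ\tau=\tau\circ K_{p,n}$ together with the fixed points $\pm 1$ forces each open half-plane to map into itself, after which Montel's theorem gives normality there.
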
 
Since $\infty$ is a repelling fixed point of $K_{p,n}$ and $\mathcal{J}(K_{p,n}) \neq \widehat{\mathbb{C}}$, it follows from Theorem~\ref{Boyd2000} that $\mathcal{J}(K_{p,n})$ is not invariant under any translation whenever $p$ has exactly two distinct roots with different multiplicities or has at least three distinct roots. In this case, there are rotational symmetries of $\mathcal{J}(K_{p,n})$ whenever $\Sigma p$ is non-trivial. This motivates the following conjecture.
\begin{Conjecture}
For $n \geq 2$ and a normalized polynomial 
$p$, $\Sigma K_{p,n}=\Sigma p$.	
\end{Conjecture}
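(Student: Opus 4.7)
The containment $\Sigma p \subseteq \Sigma K_{p,n}$ is Theorem \ref{konig-symmetry}, so the plan is to establish $\Sigma K_{p,n} \subseteq \Sigma p$. Writing $p(z) = z^\alpha p_0(z^\beta)$ as in Equation (\ref{norm}) and assuming $\beta \geq 2$ (when $\beta = 1$, $\Sigma p$ is trivial and a separate argument using Lemma \ref{F_prop2} at a superattracting fixed point of $K_{p,n}$ would be needed to show $\Sigma K_{p,n}$ is also trivial), I would aim to express $K_{p,n}$ as a quotient of two centered polynomials of the form required by Theorem \ref{Form1}, imitating the proof of Theorem \ref{New_equality} for the Newton case $n = 2$.

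To that end, I would use the standard identity $(1/p)^{[j]}(z) = A_j(z)/p(z)^{j+1}$, where $A_0 \equiv 1$ and $A_{j+1} = A_j' p - (j+1) A_j p'$, to write
\begin{equation*}
K_{p,n}(z) = \frac{z A_{n-1}(z) + (n-1) A_{n-2}(z) p(z)}{A_{n-1}(z)}.
\end{equation*}
A short induction on the recursion gives $A_j(\lambda z) = \lambda^{j(\alpha - 1)} A_j(z)$ whenever $\lambda^\beta = 1$. In the most accessible case, when $p$ has a simple root at the origin ($\alpha = 1$), this forces each $A_j$ to be a polynomial in $z^\beta$, and the recursion together with $p(0) = 0$ gives $A_j(0) = (-1)^j j!\, p_0(0)^j$. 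A direct computation then yields
\begin{equation*}
A_{n-1}(0) + (n-1) p_0(0) A_{n-2}(0) = (-1)^{n-1}(n-1)!\, p_0(0)^{n-1}(1 - 1) = 0,
\end{equation*}
so the numerator $z A_{n-1}(z) + (n-1) A_{n-2}(z) p(z)$ vanishes at the origin to order at least $\beta + 1$. Hence $K_{p,n} = P/Q$ with $P(z) = z^{\alpha_1} P_0(z^{\beta_1})$ and $Q(z) = Q_0(z^{\beta_2})$, where $\alpha_1 \geq \beta + 1 > 1 = \alpha_2 + 1$ and $\gcd(\beta_1, \beta_2) \geq \beta \geq 2$. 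Theorem \ref{Form1}, combined with non-exceptionality of $K_{p,n}$, the assumed absence of parabolic and rotation domains, and Theorem \ref{Boyd2000} applied at the repelling fixed point $\infty$ (ruling out translation symmetries), then delivers $\Sigma K_{p,n} = \{z \mapsto \lambda z : \lambda^\beta = 1\} = \Sigma p$.

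The main obstacle is to handle the case $\alpha \geq 2$, when $p$ has a root of multiplicity at least two at the origin. The twist $j(\alpha - 1)$ in the symmetry of $A_j$ is then non-trivial, so $A_j$ ceases to be a polynomial in $z^\beta$, and the clean vanishing of the numerator at the origin demonstrated above must be replaced by a more delicate analysis controlled by $\gcd(\beta, \alpha - 1)$. An affine shift placing a simple root of $p$ at the origin moves the centroid and thereby destroys the normalization, so the $\alpha \geq 2$ case requires direct attention. Finally, the conjecture should be read with the tacit assumption that $p$ does not have exactly two distinct roots of the same multiplicity; otherwise $\mathcal{J}(K_{p,n})$ is a line invariant under infinitely many translations, and the stated equality would fail.
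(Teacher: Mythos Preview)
The statement you are trying to prove is presented in the paper as a \emph{Conjecture}, not a theorem: the paper offers no proof of $\Sigma K_{p,n}=\Sigma p$ for general $n$. Immediately after stating the conjecture the authors only remark that Theorem~\ref{New_equality} settles the case $n=2$ under the extra hypotheses that $p$ is generic with a simple root at the origin and that $N_p$ has no parabolic or Siegel domain. So there is no ``paper's proof'' to compare your proposal against.

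That said, your outline is a reasonable extension of the paper's $n=2$ argument and the recursion $A_{j+1}=A_j' p-(j+1)A_j p'$ together with the symmetry $A_j(\lambda z)=\lambda^{j(\alpha-1)}A_j(z)$ is correct. A few points deserve attention if you want to push this into an actual proof. First, Theorem~\ref{Form1} concludes $\Sigma R=\{z\mapsto\lambda z:\lambda^{\gcd(\beta_1,\beta_2)}=1\}$, so showing $\gcd(\beta_1,\beta_2)\geq\beta$ is not enough: you need $\gcd(\beta_1,\beta_2)=\beta$ exactly, which requires verifying that $\beta$ is maximal for the denominator $A_{n-1}(z)=Q_0(z^{\beta_2})$. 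Second, Theorem~\ref{Form1} needs the numerator and denominator to be coprime and non-monomial; in the Newton case the paper invokes genericity of $p$ for coprimality, and you will need an analogous hypothesis here. Third, as you note, the case $\alpha\geq 2$ is genuinely harder and not handled by your sketch, nor is the $\beta=1$ case. Finally, your caveat about the two-equal-multiplicity case and the implicit ``no parabolic or rotation domain'' assumption are both necessary, and the conjecture as literally stated in the paper does not include them, so strictly speaking the conjecture is false without such side conditions.
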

It is already proved in Theorem~\ref{New_equality}
that this is true for $n=2$ under certain conditions.
\end{Remark}
We now prove Theorem~\ref{konig-symmetry} for another root-finding method, namely the  Chebyshev's method. The Chebyshev's method of $p$, denoted by $C_p$ is defined as $$C_p(z)=z-\left(1+\frac{1}{2}L_p(z)\right)\frac{p(z)}{p'(z)}$$
where $L_p(z)=\frac{p(z)p''(z)}{(p'(z))^2}$. Note that if $p$ is a monomial or a linear polynomial then $C_p$ is a linear polynomial, and this is not of interest here. Now onwards, we assume that $p$ is not a monomial and $\deg(p)\geq 2$.
The Chebyshev's method is a third-order convergent method. In other words, the local degree of $C_p$ at each simple root of $p$ is at least three.
\begin{theorem}\label{Sym_Cheb}
	For every normalized polynomial $p$, $\Sigma p\subseteq \Sigma C_p$.
\end{theorem}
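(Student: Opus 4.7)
The plan is to establish the commutation relation $C_p \circ \sigma = \sigma \circ C_p$ for each $\sigma \in \Sigma p$, and then to deduce $\sigma \in \Sigma C_p$ from this identity.

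First, I would invoke Theorem~\ref{poly-symm} to write $p$ in its normalized form $p(z) = z^\alpha p_0(z^\beta)$ and note that every $\sigma \in \Sigma p$ has the shape $\sigma(z) = \lambda z$ with $\lambda^\beta = 1$. Direct substitution then gives $p(\lambda z) = \lambda^\alpha p(z)$, and differentiating this identity once and twice in $z$ yields $p'(\lambda z) = \lambda^{\alpha-1} p'(z)$ and $p''(\lambda z) = \lambda^{\alpha-2} p''(z)$. These three identities imply that numerator and denominator of $L_p = \tfrac{p \cdot p''}{(p')^2}$ both scale by $\lambda^{2\alpha - 2}$ under $\sigma$, so $L_p(\lambda z) = L_p(z)$, while $\tfrac{p(\lambda z)}{p'(\lambda z)} = \lambda \cdot \tfrac{p(z)}{p'(z)}$.

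Plugging these relations into the definition of $C_p$ yields
\[
C_p(\lambda z) \;=\; \lambda z - \bigl(1 + \tfrac{1}{2} L_p(z)\bigr)\cdot \lambda\,\frac{p(z)}{p'(z)} \;=\; \lambda\, C_p(z),
\]
i.e.\ $C_p \circ \sigma = \sigma \circ C_p$. Iterating, $C_p^n \circ \sigma = \sigma \circ C_p^n$ for every $n \geq 1$. Since $\sigma$ is a rotation about the origin, hence a chordal isometry of $\widehat{\mathbb{C}}$, normality of the family $\{C_p^n\}$ at a point $z$ transports to normality at $\sigma(z)$. Consequently $\sigma$ preserves $\mathcal{F}(C_p)$, and therefore $\mathcal{J}(C_p)$, which gives the desired $\sigma \in \Sigma C_p$. (Alternatively, one could appeal to Remark~\ref{Rot_nes} with $n = 1$ once the commutation is established.)

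I do not anticipate any serious obstacle: the entire argument reduces to the arithmetic observation that, under $\sigma$, the quantities $p,p',p''$ scale by $\lambda^\alpha,\lambda^{\alpha-1},\lambda^{\alpha-2}$ respectively, which is exactly the balance that makes $L_p$ invariant and $p/p'$ equivariant by a single factor of $\lambda$. The only small points to monitor are that $C_p$ is genuinely a rational map (guaranteed by $\deg(p) \geq 2$ and $p$ not being a monomial, as assumed just before the statement) and that the commutation argument for Julia-set invariance relies only on $\sigma$ being a homeomorphism of $\widehat{\mathbb{C}}$, which is automatic.
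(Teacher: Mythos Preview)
Your proof is correct and follows essentially the same route as the paper: both derive the scaling relations $p(\lambda z)=\lambda^{\alpha}p(z)$, $p'(\lambda z)=\lambda^{\alpha-1}p'(z)$, $p''(\lambda z)=\lambda^{\alpha-2}p''(z)$, conclude $L_p\circ\sigma=L_p$ and hence $C_p\circ\sigma=\sigma\circ C_p$, and infer $\sigma(\mathcal{J}(C_p))=\mathcal{J}(C_p)$. Your normality argument for the final step is slightly more explicit than the paper's one-line assertion, and your parenthetical appeal to Remark~\ref{Rot_nes} should carry the implicit non-exceptionality hypothesis from Lemma~\ref{F_prop1}, but your primary argument does not rely on it.
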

\begin{proof}
	Recall  that  $p(z)=z^\alpha p_0(z^\beta)$  where  $\alpha, \beta$ are maximal for this expression. If $\beta=1$, then $\Sigma p$ contains the identity map only  and  hence the theorem is trivial. 
	\par Let $\beta\geq 2$. Then $p$ has at least two non-zero roots. This is because  $p_0(z^\beta)$ has a non-zero root and $\beta \geq 2$. For every $\sigma \in \Sigma p, \sigma(z)=\lambda z$  where $\lambda^\beta=1$. Note that $p(\lambda z)=\lambda^\alpha z^\alpha p_0(\lambda^\beta z)=\lambda^\alpha p(z)$. Differentiating it once and twice, it is found that
	$p'(\lambda z)=\lambda^{\alpha-1} p'(z)$ and $
	 p''(\lambda z)=\lambda^{\alpha-2} p''(z).$
	Thus, $L_p(\sigma(z))=\frac{\lambda^{2\alpha-2}p(z)p''(z)}{(\lambda^{\alpha-1}p'(z))^2}=L_p(z)$ and hence
	\begin{align*}
	C_p(\sigma(z))&=\sigma(z)-\left(1+\frac{1}{2}L_p(\sigma(z))\right)\frac{p(\sigma(z))}{p'(\sigma(z))}\\
	&=\lambda z-\left(1+\frac{1}{2}L_p(z)\right)\frac{\lambda p(z)}{p'(z)}=\sigma(C_p(z)).
	\end{align*}
	Therefore $\sigma(\mathcal{J}(C_p) =\mathcal{J}(C_p)$ and $\sigma\in \Sigma C_p$.
\end{proof}
A generalized version of Theorems \ref{Yang}, \ref{konig-symmetry} and \ref{Sym_Cheb} is established in \cite{Sym-and-dyn}. It is shown that if a root-finding method $F$ satisfies a special property, named as the Scaling Theorem, then for any normalized polynomial $p$, $\Sigma p\subseteq \Sigma F_p$ (Theorem 1.1., \cite{Sym-and-dyn}). Note that the K\"{o}nig's methods and the Chebyshev's method satisfy the Scaling Theorem (see \cite{BuffHenriksen2003} and \cite{Nayak_Pal}). An attempt to find equality in $\Sigma p$ and $\Sigma C_p$ is made in \cite{Sym-and-dyn} and \cite{Quartic}. The work is done for  polynomials of degree up to four. In a general case, the same is done for unicritical polynomial and polynomials with exactly two roots. Considering a polynomial of any degree, the following result proves equality in $\Sigma p$ and $\Sigma C_p$ under certain conditions.
\begin{theorem}\label{Eq_Cheby}
Let  $p$ be a normalized and generic polynomial with a simple root at the origin such that $p'$ is also generic. If $\Sigma p$ is non-trivial and $C_p$ does not contain a parabolic or rotation domain then $\Sigma p=\Sigma C_p$.
\end{theorem}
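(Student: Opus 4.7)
The plan is to emulate the proof of Theorem~\ref{New_equality}. Since $\Sigma p \subseteq \Sigma C_p$ by Theorem~\ref{Sym_Cheb}, the work lies in the reverse inclusion. I aim to express $C_p$ as a ratio of two centered polynomials of the shape demanded by Theorem~\ref{Form1}, so that applying the latter pins down $\Sigma C_p$ exactly.

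Since $0$ is a simple root of $p$ and $\Sigma p$ is non-trivial, we may write $p(z) = z\,p_0(z^\beta)$ with $\beta \geq 2$ and $p_0(0) \neq 0$. Using the identity $z p'(z) - p(z) = \beta z^{\beta+1} p_0'(z^\beta)$ together with $p''(z) = \beta z^{\beta-1} Q_1'(z^\beta)$, a direct calculation shows
$$ C_p(z) \;=\; \frac{\beta\, z^{\beta+1}\, N(z^\beta)}{2\, Q_1(z^\beta)^3}, $$
where $Q_1(w) := p_0(w)+\beta w p_0'(w)$ (so $p'(z) = Q_1(z^\beta)$) and $N(w) := 2\,p_0'(w)\,Q_1(w)^2 - p_0(w)^2\, Q_1'(w)$. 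Setting $P(z) := \beta z^{\beta+1} N(z^\beta)/2$ and $Q(z) := Q_1(z^\beta)^3$, I would apply Theorem~\ref{Form1} to the rational map $R = P/Q = C_p$.

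The verifications run as follows. Because $p$ is generic, $p_0$ has simple non-zero roots; because $p'$ is generic, so does $Q_1$. A common root $w_0 \neq 0$ of $p_0$ and $Q_1$ would force $p_0'(w_0)=0$ via the formula for $Q_1$, contradicting genericity of $p$; consequently at each root $w_0$ of $Q_1$, $N(w_0) = -p_0(w_0)^2\, Q_1'(w_0) \neq 0$, and $Q_1(0) = p_0(0) \neq 0$, so $P$ and $Q$ share no root. Writing $p_0(w)=w^{m_1}+a_2 w^{m_2}+\cdots + a_{k+1}$ with $\gcd(m_1,\ldots,m_k)=1$, the same gcd condition passes to $Q_1(w)^3$ (since each $m_i$ appears as a non-zero power), so $\beta_2 = \beta$ in the notation of Theorem~\ref{Form1}; every power of $z$ in $P$ lies in $\beta+1+\beta\mathbb{Z}_{\geq 0}$, whence $\beta \mid \beta_1$ and $\gcd(\beta_1,\beta_2) = \beta$. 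The bound $\alpha_1 \geq \beta+1 > 1 = \alpha_2+1$ is immediate, both polynomials are centered because $\beta \geq 2$, and a top-coefficient computation gives the leading coefficient of $N$ as $m_1(1+\beta m_1)(1+2\beta m_1) \neq 0$, so $P$ and $Q$ are non-monomial. Finally, $p$ has at least $\beta+1 \geq 3$ distinct roots, yielding at least three attracting basins of $C_p$, which excludes $\mathcal{J}(C_p)$ from being $\widehat{\mathbb{C}}$, a circle, or a line; and since $\infty$ is a fixed point of $C_p$, Theorem~\ref{Boyd2000} then rules out translation invariance of $\mathcal{J}(C_p)$. Combined with the hypothesis on parabolic and rotation domains, Theorem~\ref{Form1} applies and gives $\Sigma C_p = \{z \mapsto \lambda z: \lambda^\beta = 1\} = \Sigma p$.

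The principal obstacle is the structural analysis of $N(w)$: coprimality of $P$ and $Q$ genuinely needs both halves of the genericity assumption, and the delicate point is showing that $\gcd(\beta_1,\beta_2)$ is exactly $\beta$ (rather than a proper multiple), so Theorem~\ref{Form1} returns the symmetry group we want and not a larger one.
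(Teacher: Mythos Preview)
Your argument is essentially the paper's own: both proofs rewrite $C_p$ as a quotient of centered polynomials in $z^\beta$ (your $\beta z^{\beta+1}N(z^\beta)\big/2Q_1(z^\beta)^3$ is just a cleaner packaging of the paper's $z(F_1-F_2)/2(p')^3$), verify the hypotheses of Theorem~\ref{Form1}, and conclude. One small slip: a nonzero leading coefficient of $N$ does not by itself make $P$ non-monomial---you should also note that the lowest-degree term of $N$ has degree $m_k-1<3m_1-1$ with nonzero coefficient $a_{k+1}^2 a_k m_k(1-\beta m_k)$, or else invoke the Remark following Theorem~\ref{Form1} that handles monomial $P$.
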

\begin{proof}
The Chebyshev's method has a repelling fixed point at $\infty$.	Using the arguments used in the proof of the Theorem \ref{New_equality}, it is found that $\mathcal{J}(C_p) \neq \widehat{\mathbb{C}}$ and $\Sigma C_p$ does not contain any translation.  For $p(z)=z\sum_{i=1}^{k}a_iz^{m_i\beta}+a_{k+1}z$ where $a_1=1$, observe that $p'(z)=\sum_{i=1}^{k}b_iz^{m_i\beta}+a_{k+1}$, where $b_i=a_i(m_i\beta+1)$, and $p''(z)=\sum_{i=1}^{k}c_iz^{m_i\beta-1}$,
	where $c_i=b_im_i\beta$, for $i=1,2,\dots, k$ (see (\ref{form p_0})). Note that $\Sigma p$ is non-trivial amounts to $\beta \geq 2$.
We have
$$
	C_p(z)=z-\left(1+\frac{1}{2}L_p(z)\right)\frac{p(z)}{p'(z)}=z\left(\frac{2(p'(z))^3-\{2(p'(z))^2-zp''(z)p_0(z^\beta)\}p_0(z^\beta)}{2(p'(z))^3}\right)
.$$
Since $p,p'$ are generic, there is no common root of $F_1(z)-F_2 (z)$ and $ p'(z))^3$.
Let  $F_1(z)=2(p'(z))^3$  and $F_2(z)=\{2(p'(z))^2-\-zp''(z) p_0(z^\beta)\}p_0(z^\beta)$. Then $$	C_p(z) =z\left(\frac{F_1(z)-F_2(z)}{2(p'(z))^3}\right).$$

 Since $F_1(z)=2\left(\sum_{i=1}^{k}b_iz^{m_i\beta}+a_{k+1}\right)^3$ and,
	\begin{align*}
	&F_2(z)= \\
	& \left[2\left(\sum_{i=1}^{k}b_iz^{m_i\beta}+a_{k+1}\right)^2-\left(\sum_{i=1}^{k}c_iz^{m_i\beta}\right)\left(\sum_{i=1}^{k}a_iz^{m_i\beta}+a_{k+1}\right)\right]\left(\sum_{i=1}^{k}a_iz^{m_i\beta}+a_{k+1}\right).
	\end{align*}
	The constant terms of $F_1(z)$ and $F_2(z)$ are the same and that is $2a_{k+1}^3$. In other words,  there is no constant term in  $F_1(z)-F_2(z)$. Further, each of its terms is a constant multiple of some positive power of  $z^\beta$.
	\par 
	
	 Hence, there exist natural numbers $\zeta$ and $\beta_1$ such that $F_1(z)-F_2(z)=Az^\zeta F(z^{\beta_1})$, where $\beta$ divides $\zeta $ as well as $\beta_1$  and $A$ is the coefficient of the leading term of $F_1 (z)-F_2 (z)$.  Choose $\zeta $ and $\beta_1$ to be maximal for this expression. Note that $\zeta$ is the multiplicity of $0$ as a root of $F_1(z)-F_2 (z)$. By the choice of $A$, $F(z^{\beta_1})$ is monic. Further, since $\beta_1 \geq 2$,    $F(z^{\beta_1})$ is centered. Thus $F(z^\beta_1)$ is a normalized polynomial and is  with a non-zero constant term because of the choice of $\zeta$.
	 	Note that $(p'(z))^3$ is centered as $\beta \geq 2$ and taking its leading coefficient, say $A'$ common we get the expression of $C_p$ as
	$$C_p(z)=B z^{\zeta+1}\frac{F(z^{\beta_1})}{Q(z^\beta)},$$ where $B=\frac{A}{2A'}$ and $(p'(z))^3 =A' Q(z^\beta) $. For this latter expression, $\beta$ is maximal as it is so for $p'$. Since $\gcd(\beta_1,\beta)=1$, it follows from Theorem~\ref{Form1} that $\Sigma C_p= \{z\mapsto\lambda z:\lambda^\beta=1\}$. Therefore $\Sigma C_p =\Sigma p$.
\end{proof}
Consider the polynomial $p(z)=z(z^3-1)$. Then $p'(z)=4z^3-1$ is generic. In \cite{Sym-and-dyn} $\Sigma p=\Sigma C_{p}$ is proved by analyzing the number of unbounded Fatou components of $C_{p}$. It is also proved that the $\mathcal{J}(C_{p})$ is connected and $\mathcal{F}$ is the union of basins of attraction of the superattracting fixed points of $C_{p}$ corresponding to the roots of $p$. Thus $\mathcal{F}(C_{p})$ does not consist of any parabolic or rotation domain. The Fig. (\ref{New_sym}(b), illustrating the Fatou and the Julia set of $C_{p}$, supports Theorem \ref{Eq_Cheby}.

\end{document}